\newtheorem{axiom}[theorem]{Axiom}
\newtheorem{conjecture}[theorem]{Conjecture}
\newtheorem{example}[theorem]{Example}
\newtheorem{exercise}[theorem]{Exercise}
\newtheorem{remark}[theorem]{Remark}
\newtheorem{assumption}[theorem]{Assumption}
\newcommand{\be}{\begin{equation}}
\newcommand{\ee}{\end{equation}}
\newcommand{\bea}{\begin{eqnarray}}
\newcommand{\eea}{\end{eqnarray}}
\newcommand{\beas}{\begin{eqnarray*}}
	\newcommand{\eeas}{\end{eqnarray*}}
\newcommand{\Grad}{\ensuremath{\nabla}}
\newcommand{\norm}[2][]{||#2||_{#1}}
\newcommand{\Div}[1]{\nabla \cdot #1}
\newcommand{\dt}{\Delta{t}}
\newcommand{\bstar}[3]{b^*\left(#1,#2,#3\right)}
\def\qed{\hbox{\vrule width 6pt height 6pt depth 0pt}}
\providecommand{\BOXEDSPECIAL}[4]{\hbox to #2{\raise #3\hbox to #2{\null #1\hfil}}}
\chardef\@x10\chardef\@xv60
\def\tcitime{
\def\@time{%
  \@minute\time\@hour\@minute\divide\@hour\@xv
  \ifnum\@hour<\@x 0\fi\the\@hour:%
  \multiply\@hour\@xv\advance\@minute-\@hour
  \ifnum\@minute<\@x 0\fi\the\@minute
  }}%
\def\QCTOpt[#1]#2{%
  \def\QCTOptB{#1}
  \def\QCTOptA{#2}
}
\def\QCTNOpt#1{%
  \def\QCTOptA{#1}
  \let\QCTOptB\empty
}
\def\Qct{%
  \@ifnextchar[{%
    \QCTOpt}{\QCTNOpt}
}
\def\QCBOpt[#1]#2{%
  \def\QCBOptB{#1}
  \def\QCBOptA{#2}
}
\def\QCBNOpt#1{%
  \def\QCBOptA{#1}
  \let\QCBOptB\empty
}
\def\Qcb{%
  \@ifnextchar[{%
    \QCBOpt}{\QCBNOpt}
}
\def\PrepCapArgs{%
  \ifx\QCBOptA\empty
    \ifx\QCTOptA\empty
      {}%
    \else
      \ifx\QCTOptB\empty
        {\QCTOptA}%
      \else
        [\QCTOptB]{\QCTOptA}%
      \fi
    \fi
  \else
    \ifx\QCBOptA\empty
      {}%
    \else
      \ifx\QCBOptB\empty
        {\QCBOptA}%
      \else
        [\QCBOptB]{\QCBOptA}%
      \fi
    \fi
  \fi
}
\def\GRAPHICSPS#1{%
 \ifcase\GRAPHICSTYPE
   \special{ps: #1}%
 \or
   \special{language "PS", include "#1"}%
 \fi
}%
\def\graffile#1#2#3#4#5{%
    \bgroup
    \leavevmode
    \@ifundefined{bbl@deactivate}{\def~{\string~}}{\activesoff}
    \raise -#4 \BOXTHEFRAME{%
       \BOXEDSPECIAL{#1}{#2}{#3}{#5}}%
    \egroup
}%
\def\draftbox#1#2#3#4{%
 \leavevmode\raise -#4 \hbox{%
  \frame{\rlap{\protect\tiny #1}\hbox to #2%
   {\vrule height#3 width\z@ depth\z@\hfil}%
  }%
 }%
}%
\newif\ifwasdraft
\def\GRAPHIC#1#2#3#4#5{%
 \ifnum\draft=\@ne\draftbox{#2}{#3}{#4}{#5}%
  \else\graffile{#1}{#3}{#4}{#5}{#2}%
  \fi
 }%
\def\addtoLaTeXparams#1{%
    \edef\LaTeXparams{\LaTeXparams #1}}%
\newif\ifBoxFrame \BoxFramefalse
\newif\ifOverFrame \OverFramefalse
\newif\ifUnderFrame \UnderFramefalse
\def\BOXTHEFRAME#1{%
   \hbox{%
      \ifBoxFrame
         \frame{#1}%
      \else
         {#1}%
      \fi
   }%
}
\def\doFRAMEparams#1{\BoxFramefalse\OverFramefalse\UnderFramefalse\readFRAMEparams#1\end}%
\def\readFRAMEparams#1{%
 \ifx#1\end%
  \let\next=\relax
  \else
  \ifx#1i\dispkind=\z@\fi
  \ifx#1d\dispkind=\@ne\fi
  \ifx#1f\dispkind=\tw@\fi
  \ifx#1t\addtoLaTeXparams{t}\fi
  \ifx#1b\addtoLaTeXparams{b}\fi
  \ifx#1p\addtoLaTeXparams{p}\fi
  \ifx#1h\addtoLaTeXparams{h}\fi
  \ifx#1X\BoxFrametrue\fi
  \ifx#1O\OverFrametrue\fi
  \ifx#1U\UnderFrametrue\fi
  \ifx#1w
    \ifnum\draft=1\wasdrafttrue\else\wasdraftfalse\fi
    \draft=\@ne
  \fi
  \let\next=\readFRAMEparams
  \fi
 \next
 }%
\def\IFRAME#1#2#3#4#5#6{%
      \bgroup
      \let\QCTOptA\empty
      \let\QCTOptB\empty
      \let\QCBOptA\empty
      \let\QCBOptB\empty
      #6%
      \parindent=0pt%
      \leftskip=0pt
      \rightskip=0pt
      \setbox0 = \hbox{\QCBOptA}%
      \@tempdima = #1\relax
      \ifOverFrame
          \typeout{This is not implemented yet}%
          \show\HELP
      \else
         \ifdim\wd0>\@tempdima
            \advance\@tempdima by \@tempdima
            \ifdim\wd0 >\@tempdima
               \textwidth=\@tempdima
               \setbox1 =\vbox{%
                  \noindent\hbox to \@tempdima{\hfill\GRAPHIC{#5}{#4}{#1}{#2}{#3}\hfill}\\%
                  \noindent\hbox to \@tempdima{\parbox[b]{\@tempdima}{\QCBOptA}}%
               }%
               \wd1=\@tempdima
            \else
               \textwidth=\wd0
               \setbox1 =\vbox{%
                 \noindent\hbox to \wd0{\hfill\GRAPHIC{#5}{#4}{#1}{#2}{#3}\hfill}\\%
                 \noindent\hbox{\QCBOptA}%
               }%
               \wd1=\wd0
            \fi
         \else
            \ifdim\wd0>0pt
              \hsize=\@tempdima
              \setbox1 =\vbox{%
                \unskip\GRAPHIC{#5}{#4}{#1}{#2}{0pt}%
                \break
                \unskip\hbox to \@tempdima{\hfill \QCBOptA\hfill}%
              }%
              \wd1=\@tempdima
           \else
              \hsize=\@tempdima
              \setbox1 =\vbox{%
                \unskip\GRAPHIC{#5}{#4}{#1}{#2}{0pt}%
              }%
              \wd1=\@tempdima
           \fi
         \fi
         \@tempdimb=\ht1
         \advance\@tempdimb by \dp1
         \advance\@tempdimb by -#2%
         \advance\@tempdimb by #3%
         \leavevmode
         \raise -\@tempdimb \hbox{\box1}%
      \fi
      \egroup%
}%
\def\DFRAME#1#2#3#4#5{%
 \begin{center}
     \let\QCTOptA\empty
     \let\QCTOptB\empty
     \let\QCBOptA\empty
     \let\QCBOptB\empty
     \ifOverFrame 
        #5\QCTOptA\par
     \fi
     \GRAPHIC{#4}{#3}{#1}{#2}{\z@}
     \ifUnderFrame 
        \nobreak\par\nobreak#5\QCBOptA
     \fi
 \end{center}%
 }%
\def\FFRAME#1#2#3#4#5#6#7{%
 \begin{figure}[#1]%
  \let\QCTOptA\empty
  \let\QCTOptB\empty
  \let\QCBOptA\empty
  \let\QCBOptB\empty
  \ifOverFrame
    #4
    \ifx\QCTOptA\empty
    \else
      \ifx\QCTOptB\empty
        \caption{\QCTOptA}%
      \else
        \caption[\QCTOptB]{\QCTOptA}%
      \fi
    \fi
    \ifUnderFrame\else
      \label{#5}%
    \fi
  \else
    \UnderFrametrue%
  \fi
  \begin{center}\GRAPHIC{#7}{#6}{#2}{#3}{\z@}\end{center}%
  \ifUnderFrame
    #4
    \ifx\QCBOptA\empty
      \caption{}%
    \else
      \ifx\QCBOptB\empty
        \caption{\QCBOptA}%
      \else
        \caption[\QCBOptB]{\QCBOptA}%
      \fi
    \fi
    \label{#5}%
  \fi
  \end{figure}%
 }%
\def\makeactives{
  \catcode`\"=\active
  \catcode`\;=\active
  \catcode`\:=\active
  \catcode`\'=\active
  \catcode`\~=\active
}
   \gdef\activesoff{%
      \def"{\string"}
      \def;{\string;}
      \def:{\string:}
      \def'{\string'}
    }
\def\FRAME#1#2#3#4#5#6#7#8{%
 \bgroup
 \ifnum\draft=\@ne
   \wasdrafttrue
 \else
   \wasdraftfalse%
 \fi
 \def\LaTeXparams{}%
 \dispkind=\z@
 \def\LaTeXparams{}%
 \doFRAMEparams{#1}%
 \ifnum\dispkind=\z@\IFRAME{#2}{#3}{#4}{#7}{#8}{#5}\else
  \ifnum\dispkind=\@ne\DFRAME{#2}{#3}{#7}{#8}{#5}\else
   \ifnum\dispkind=\tw@
    \edef\@tempa{\noexpand\FFRAME{\LaTeXparams}}%
    \@tempa{#2}{#3}{#5}{#6}{#7}{#8}%
    \fi
   \fi
  \fi
  \ifwasdraft\draft=1\else\draft=0\fi{}%
  \egroup
 }%
\def\TEXUX#1{"texux"}
\long\def\QQQ#1#2{%
     \long\expandafter\def\csname#1\endcsname{#2}}%
\long\def\QQA#1#2{}%
\newcommand{\QTR}[2]{\csname text#1\endcsname{#2}}
\def\EXPAND#1[#2]#3{}%
\def\NOEXPAND#1[#2]#3{}%
\def\LaTeXparent#1{}%
\def\ChildStyles#1{}%
\def\ChildDefaults#1{}%
\def\QTagDef#1#2#3{}%
  \providecommand{\UNICODE}[2][]{}
\def\QQfnmark#1{\footnotemark}
 \def\abstract{%
  \if@twocolumn
   \section*{Abstract (Not appropriate in this style!)}%
   \else \small 
   \begin{center}{\bf Abstract\vspace{-.5em}\vspace{\z@}}\end{center}%
   \quotation 
   \fi
  }%
   \def\registered{\relax\ifmmode{}\r@gistered
                    \else$\m@th\r@gistered$\fi}%
 \def\r@gistered{^{\ooalign
  {\hfil\raise.07ex\hbox{$\scriptstyle\rm\text{R}$}\hfil\crcr
  \mathhexbox20D}}}}{}%
\newdimen\theight
\def\Column{%
 \vadjust{\setbox\z@=\hbox{\scriptsize\quad\quad tcol}%
  \theight=\ht\z@\advance\theight by \dp\z@\advance\theight by \lineskip
  \kern -\theight \vbox to \theight{%
   \rightline{\rlap{\box\z@}}%
   \vss
   }%
  }%
 }%
\def\qed{%
 \ifhmode\unskip\nobreak\fi\ifmmode\ifinner\else\hskip5\p@\fi\fi
 \hbox{\hskip5\p@\vrule width4\p@ height6\p@ depth1.5\p@\hskip\p@}%
 }%
\def\miss{\hbox{\vrule height2\p@ width 2\p@ depth\z@}}%
\def\tcol#1{{\baselineskip=6\p@ \vcenter{#1}} \Column}  %
\def\newfmtname{LaTeX2e}
  \DeclareOldFontCommand{\rm}{\normalfont\rmfamily}{\mathrm}
  \DeclareOldFontCommand{\sf}{\normalfont\sffamily}{\mathsf}
  \DeclareOldFontCommand{\tt}{\normalfont\ttfamily}{\mathtt}
  \DeclareOldFontCommand{\bf}{\normalfont\bfseries}{\mathbf}
  \DeclareOldFontCommand{\it}{\normalfont\itshape}{\mathit}
  \DeclareOldFontCommand{\sl}{\normalfont\slshape}{\@nomath\sl}
  \DeclareOldFontCommand{\sc}{\normalfont\scshape}{\@nomath\sc}
  \newcounter{equationnumber}  
  \def\mathletters{%
     \addtocounter{equation}{1}
     \edef\@currentlabel{\theequation}%
     \setcounter{equationnumber}{\c@equation}
     \setcounter{equation}{0}%
     \edef\theequation{\@currentlabel\noexpand\alph{equation}}%
  }
    \def\BibTeX{{\rm B\kern-.05em{\sc i\kern-.025em b}\kern-.08em
                 T\kern-.1667em\lower.7ex\hbox{E}\kern-.125emX}}}{}%
\def\AmS{{\protect\usefont{OMS}{cmsy}{m}{n}%
                A\kern-.1667em\lower.5ex\hbox{M}\kern-.125emS}}}{}%
\def\@@eqncr{\let\@tempa\relax
    \ifcase\@eqcnt \def\@tempa{& & &}\or \def\@tempa{& &}%
      \else \def\@tempa{&}\fi
     \@tempa
     \if@eqnsw
        \iftag@
           \@taggnum
        \else
           \@eqnnum\stepcounter{equation}%
        \fi
     \fi
     \global\tag@false
     \global\@eqnswtrue
     \global\@eqcnt\z@\cr}
\def\TCItag{\@ifnextchar*{\@TCItagstar}{\@TCItag}}
\def\@TCItag#1{%
    \global\tag@true
    \global\def\@taggnum{(#1)}}
\def\@TCItagstar*#1{%
    \global\tag@true
    \global\def\@taggnum{#1}}
\begin{document}
	
	\title{An Artificial Compression Reduced Order Model \thanks{Submitted to the editors 2/23/2019
	\funding{The research of first, third, and fourth authors was partially supported by 
		NSF grants DMS1522267, 1817542 and CBET 1609120. The research of the second author was partially supported by NSF DMS-1821145.}}}

	\author{Victor DeCaria\thanks{Department of Mathematics,  
			University of Pittsburgh,
			Pittsburgh, PA 15206
			({\tt vpd7@pitt.edu}, {\tt mem226@pitt.edu}, {\tt wjl@pitt.edu}).} \and Traian Iliescu\thanks{Department of Mathematics, Virginia Tech, Blacksburg, VA 24061-0123 ({\tt iliescu@vt.edu}).} \and William Layton\footnotemark[2] \and Michael McLaughlin\footnotemark[2] \and Michael Schneier\thanks{Corresponding author. Department of Mathematics,  
			University of Pittsburgh,
			Pittsburgh, PA 15206 ({\tt mhs64@pitt.edu}).}}
	
	\headers{Artificial Compression Reduced Order Model}{V. DeCaria, T. Iliescu, W. Layton, M. McLaughlin, and M.Schneier}
	\maketitle
	
	\begin{abstract}
		We propose a novel artificial compression, reduced order model (AC-ROM) for the numerical simulation of viscous incompressible fluid flows.
		The new AC-ROM provides approximations not only for velocity, but also for pressure, which is needed to calculate forces on bodies in the flow and to connect the simulation parameters with pressure data. 
		The new AC-ROM does not require that the velocity-pressure ROM spaces satisfy the inf-sup (Ladyzhenskaya-Babuska-Brezzi) condition and
		its basis functions are constructed from data that are not required to be weakly-divergence free.
		We prove error estimates for the reduced basis discretization of the AC-ROM.
		We also investigate numerically the new AC-ROM in the simulation of a two-dimensional flow between offset cylinders.
	\end{abstract}

	\begin{keywords}
		Navier-Stokes equations, proper orthogonal decomposition, artificial compression
	\end{keywords}
	
	\begin{AMS}
		65M12, 65M15, 65M60
	\end{AMS}
	
	\section{Introduction}
	
	We consider the {\it Navier-Stokes equations (NSE)} with no-slip boundary conditions:
	\begin{equation}\label{eqn:nse-1}
	\begin{aligned}
	&u_t + u\cdot\nabla u + \nabla p - \nu\Delta u =  f,\ \text{and } \nabla \cdot u =  0,\ \text{in} \ \Omega \times (0,T]   \\
	&u   =  0, \ \text{on} \ \partial\Omega \times (0,T], \ \text{and } u(x,0)  =  u_0(x), \ \text{in} \ \Omega.  \\
	\end{aligned}
	\end{equation}
	Here $u$ is the velocity, $f$ is the known body force, $p$ is the pressure, and $\nu$ is the kinematic viscosity.
	For the past three decades, {\it reduced order models (ROMs)} have been successfully used in the numerical simulation of fluid flows modeled by the NSE~\eqref{eqn:nse-1} ~\cite{2017arXiv171109780A,2017arXiv171003569F,haasdonk2008reduced, hesthaven2015certified,hinze2005proper,HLB96,noack2011reduced,quarteroni2015reduced,RAMBR17,veroy2005certified}.
	The ROM construction is similar to  the full finite element approximation except we seek a solution in a low dimensional ROM space $X_{R}$ using the basis $\{ \varphi_{i}\}_{i=1}^{R}$.
	These basis functions are often assumed to be weakly divergence-free. 
	This assumption holds true, for example, if the ROM basis functions are constructed from data from a NSE  discretization with finite element velocity-pressure pairs that satisfy the inf-sup (Ladyzhenskaya-Babuska-Brezzi (LBB)) condition.
	In this case, the pressure drops out from the ROM, which yields approximations only for the velocity field: such as for the backward Euler method
\begin{equation}
\Big(\frac{u^{n+1}_R - u^{n}_R}{\Delta t}, \varphi \Big) + b^{\ast}(u_R^{n} , u^{n+1}_R ,\varphi) +\nu (\nabla u^{n+1}_R, \nabla \varphi)  
=( f^{n+1}, \varphi), \quad \forall \varphi \in X_R ,
\label{eqn:g-rom}
\end{equation}
	where
	$
	b^{\ast}(w,u,v):=\frac{1}{2}(w\cdot\nabla u,v)-\frac{1}{2}(w\cdot\nabla v,u) ,
	\ \ \forall u,v,w\in [H^1(\Omega)]^d \, ,
	$
	and the superscript denotes the timestep number. 
	
	We emphasize, however, that even when the pressure is not required in the ROM, one may still need a ROM pressure approximation.
	This happens, for example, in fluid-structure interaction problems, if drag and lift coefficients need to be computed, or if the residual has to be calculated~\cite{caiazzo2014numerical}.
	Another practical issue with velocity only ROMs is that internal (industrial) flows will often have reliable pressure data, but little to no velocity data. 
	A velocity only ROM will be unable to incorporate pressure data to improve accuracy, calibrate the model, or check if a control loop is functioning properly.
	
	When a ROM pressure approximation is required, there are two main approaches that are currently used: \\[-0.3cm]
	
	{\it (I) Inf-Sup/LBB Condition}: \ 
	In the first approach, the velocity and pressure ROM approximations satisfy the inf-sup/LBB condition:
	\begin{eqnarray}
	\inf_{q_{M} \in Q_{M}} \sup_{v_{R} \in X_{R}}
	\frac{(\nabla \cdot v_{R} , q_{M})}{\| \nabla v_{R} \| \, \| q_{M} \|}
	\geq \beta_{is} 
	> 0 \, .
	\label{eqn:inf-sup}
	\end{eqnarray}
	This approach has been extensively developed in the {\it reduced basis method (RBM)} community over the past decade~\cite{hesthaven2015certified,quarteroni2015reduced}.
	This approach yields accurate ROM approximations for both velocity and pressure and eliminates the spurious numerical instabilities in the pressure approximation that are often generated by ROMs that do not satisfy the inf-sup condition.
	Furthermore, rigorous error estimates are proven for the LBB conforming ROM approximations.
	The RBM has been successfully used in numerous scientific and engineering applications~\cite{hesthaven2015certified,quarteroni2015reduced}.
	\textit{However, enforcing the inf-sup condition~\eqref{eqn:inf-sup} is significantly more challenging for ROMs than for finite elements}.
	Indeed, in the finite element context, the approximation spaces (e.g., piecewise quadratic for the velocity and piecewise linear for the pressure, i.e., the Taylor-Hood element) are specified beforehand and the corresponding discrete inf-sup condition can be investigated {\it a priori}.
	In the ROM context, on the other hand, the approximation spaces are problem-dependent -- they are  known only after the underlying finite element simulations (or the actual physical experiments) have been carried out.
	Thus, in a ROM context, the inf-sup condition needs to be enforced for each problem separately.
	In the RBM context, this is generally achieved by enriching the ROM basis with supremizers, which need to be computed in the offline stage.
	Thus, in realistic fluid flow applications (e.g., the NSE at high Reynolds numbers), enforcing the inf-sup condition can be prohibitively expensive (see, e.g., Sections 4.2.2 and 4.2.3 in~\cite{ballarin2015supremizer}).
	\\[-0.3cm]
	
	{\it (II) Pressure Poisson Equation}: \ 
	In the second approach to generate ROM approximation for the pressure, the available ROM velocity approximation is used to solve a pressure Poisson equation for the ROM pressure approximation 
	\begin{equation}
	\Delta p_{M} 
	=  - \nabla\cdot((u_{R} \cdot \nabla) u_{R}) 
	\quad \mbox{in } \Omega\,,
	\label{eqn:pressure-poisson}
	\end{equation}
	which is obtained by taking the divergence of the NSE~\eqref{eqn:nse-1}. 
	This approach has been used in, e.g.,~\cite{akhtar2009stability,caiazzo2014numerical,noack2005need}.
	We note that this approach faces several significant challenges:
	We emphasize that  the Poisson equation~\eqref{eqn:pressure-poisson} is {\it not valid anymore if the ROM basis functions are not weakly divergence-free}.  
	This is the case, for example, if the ROM basis functions are built from data from NSE discretizations with finite element velocity-pressure pairs that do not satisfy the inf-sup/LBB condition, e.g., when the artificial compression, penalty, or projection methods are used~\cite{guermond2006overview}. 
	Furthermore, the boundary conditions for~\eqref{eqn:pressure-poisson} are not clear.
	Finally, the numerical investigation in~\cite{caiazzo2014numerical} showed that even when weakly divergence-free snapshots were used, the ROMs that solve the pressure Poisson equation~\eqref{eqn:pressure-poisson} were less competitive in terms of numerical accuracy and computational efficiency.  
	\\[-0.3cm]
	
	{\it (III) Novel AC-ROM}: \ 
	The {\it artificial compression (AC)} method and related approaches (e.g., the penalty and projection methods) have found significant success in the CFD community ~\cite{decaria2017conservative,fiordilino2017artificial,guermond2006overview}.
	The main idea in the AC method is to replace the incompressibility condition in the NSE with an artificial compression condition. 
	Thus, the AC method decouples the velocity and pressure computations, which results in significant savings in execution time and storage.
	Furthermore, since the velocity and pressure computations are decoupled, the AC method allows the use of finite element pairs that do not satisfy the inf-sup/LBB condition~\cite{Layton08}.
	(We also note that, because the incompressibility condition is not satisfied exactly, the AC method  yields velocity fields that are not weakly divergence-free.)
	
	In this paper, we develop a novel AC-ROM that employs AC to decouple the velocity-pressure ROM approximations: 
	The fully discrete algorithm for the AC-ROM algorithm can be written as:
	\begin{subequations}\label{AC_ROM-intro}
		\begin{align}
		&\Big(\frac{u^{n+1}_R - u^{n}_R}{\Delta t}, \varphi \Big) + b^{\ast}(u_R^{n} , u^{n+1}_R ,\varphi) +\nu (\nabla u^{n+1}_R, \nabla \varphi) \label{eqn:ac-rom-velocity} \\
		&- (p^{n+1}_M , \nabla \cdot \varphi)  =( f^{n+1}, \varphi) \hspace{2.2cm} \forall \varphi \in X_R \nonumber \\
		&\varepsilon \left(\frac{p_{M}^{n+1} - p_{M}^{n}}{\Delta t}, \psi \right) + (\nabla \cdot u_R^{n+1}, \psi )= 0 \qquad \forall \psi \in Q_M, 
		\label{eqn:ac-rom-pressure}
		\end{align}
	\end{subequations}
	where $(\{ \varphi_{i}\}_{i=1}^{R},\{\psi_k\}_{k=1}^{M})$ is the ROM basis for the ROM space $(X_{R},Q_{M})$.
	The new AC-ROM~\eqref{eqn:ac-rom-velocity}--\eqref{eqn:ac-rom-pressure} has several significant advantages over the approaches (I) and (II):
	\begin{itemize} \itemsep5pt
		\item The AC-ROM {\it does not require that the velocity-pressure ROM spaces satisfy the inf-sup/LBB condition}, thus avoiding the challenges encountered in approach (I).
		\item The AC-ROM basis functions are constructed from {\it data that do not have to be weakly-divergence free}, such as those from NSE discretizations with the artificial compression, penalty, or projection methods.
		Thus, the AC-ROM avoids the challenges faced by approach (II).
	\end{itemize}
	
	\bigskip
	
	The rest of the paper is organized as follows:
	In Section~\ref{sec:notation}, we introduce some notation.
	In Section~\ref{POD_sec}, we describe the proper orthogonal decomposition, which we use to construct the ROM basis.
	In Section~\ref{sec:stability} and Section~\ref{sec:error-analysis}, we prove the stability and an error estimate of the AC-ROM~\eqref{eqn:ac-rom-velocity}--\eqref{eqn:ac-rom-pressure}, respectively.
	In Section~\ref{numex}, we investigate numerically the new AC-ROM in the simulation of a two-dimensional flow between offset cylinders.
	Finally, in Section~\ref{sec:conclusions}, we draw conclusions and outline future research directions.
	
	\section{Notation and preliminaries}\label{sec:notation}
	
	We denote by $\|\cdot\|$ and $(\cdot,\cdot)$ the $L^{2}(\Omega)$ norm and inner product, respectively, and by $\|\cdot\|_{L^{p}}$ and $\|\cdot\|_{W_{p}^{k}}$ the $L^{p}(\Omega)$ and Sobolev
	$W^{k}_{p}(\Omega)$ norms, 
	respectively. $H^{k}(\Omega)=W_{2}^{k}(\Omega)$ with
	norm $\|\cdot\|_{k}$. For a function $v(x,t)$ that is well defined on $\Omega \times [0,T]$, 
	we define the norms
	$$
	|||v|||_{2,s} : = \Big(\int_{0}^{T}\|v(\cdot,t)\|_{s}^{2}dt\Big)^{\frac{1}{2}}
	\qquad \text{and} \qquad 
	|||v|||_{\infty,s} := \text{ess\,sup}_{[0,T]}\|v(\cdot,t)\|_{s} .
	$$
	The space $H^{-1}(\Omega)$ denotes the dual space of bounded linear functionals defined on $H^{1}_{0}(\Omega)=\{v\in H^{1}(\Omega)\,:\,v=0 \mbox{ on } \partial\Omega\}$; this space is equipped with the norm
	$$
	\|f\|_{-1}=\sup_{0\neq v\in X}\frac{(f,v)}{\| \nabla v\| } 
	\quad\forall f\in H^{-1}(\Omega).
	$$
	
	The solutions spaces $X$ for the velocity and $Q$ for the pressure are respectively defined as
	$$
	\begin{aligned}
	X : =& [H^{1}_{0}(\Omega)]^{d} = \{ v \in [L^{2}(\Omega)]^{d} \,:\, \nabla v \in [L^{2}(\Omega)]^{d \times d} \ \text{and} \  v = 0 \ \text{on} \ \partial \Omega \} \\
	Q : =& L^{2}_{0}(\Omega) = \Big\{ q \in L^{2}(\Omega) \,:\, \int_{\Omega} q dx = 0 \Big\}.
	\end{aligned}
	$$
	
	A weak formulation of the NSE is given as follows: find $u:(0,T]\rightarrow X$ and $p:(0,T]\rightarrow Q$ such that, for almost all $t\in(0,T]$, satisfy 
	\begin{equation}\label{wfwf}
	\left\{\begin{aligned}
	(u_{t},v)+(u\cdot\nabla u,v)+\nu(\nabla u,\nabla v)-(p
	,\nabla\cdot v)  &  =(f,v)&\quad\forall v\in X\\
	(\nabla\cdot u,q)  &  =0&\quad\forall q\in Q\\
	u(x,0)&=u^{0}(x).&
	\end{aligned}\right.
	\end{equation}

	We denote conforming velocity and pressure finite element spaces based on a regular triangulation of $\Omega$ having maximum triangle diameter $h$ by
	$
	X_{h}\subset X$ {and} $ Q_{h}\subset Q.
	$
	We also assume that the finite element spaces satisfy the approximation properties
	$$
	\begin{aligned}
	\inf_{v_h\in X_h}\| v- v_h \|&\leq C(v) h^{s+1}&\forall v\in [H^{s+1}(\Omega)]^d\\
	\inf_{v_h\in X_h}\| \nabla ( v- v_h )\|&\leq C(v) h^s&\forall v\in [H^{s+1}(\Omega)]^d\\
	\inf_{q_h\in Q_h}\|  q- q_h \|&\leq C(q) h^s&\forall q\in H^{s}(\Omega),
	\end{aligned}
	$$
	where $C$ is a positive constant that is independent of $h$.
	


	We define the trilinear form
	$$
	b(w,u,v) = (w\cdot\nabla u,v) 
	\qquad\forall u,v,w\in [H^1(\Omega)]^d
	$$
	and the explicitly skew-symmetric trilinear form given by 
	$$
	b^{\ast}(w,u,v):=\frac{1}{2}(w\cdot\nabla u,v)-\frac{1}{2}(w\cdot\nabla v,u)
	\qquad\forall u,v,w\in [H^1(\Omega)]^d \, ,
	$$
	which satisfies the bound \cite{Layton08}
	\begin{gather}
	b^{\ast}(w,u,v)\leq C_{b^*} (\| w \| \|  \nabla w\| )^{1/2}  \| \nabla u\|  \| \nabla
	v \| \qquad\forall u, v, w \in X , \label{In1} \\
	b^{\ast}(w,u,v)\leq C_{b^*}\| \nabla w\| (\| u \| \|  \nabla u\| )^{1/2}    \| \nabla
	v \| \qquad\forall u, v, w \in X \label{In2}
	\end{gather}
	
	To ensure the uniqueness of the NSE solution and ensure that standard finite element error estimates hold, we make the following regularity assumptions on the data and true solution \cite{Layton08}:
	{
		\begin{assumption}\label{assumption:reg}
			In \eqref{wfwf} we assume that $u^0 \in X$, $f \in L^{2}(0,T;L^{2}(\Omega))$, $u \in L^{\infty}(0,T;L^{2}(\Omega))\cap L^{4}(0,T;H^{s+1}(\Omega))\cap H^{1}(0,T;H^{s+1}(\Omega))\cap
			H^{2}(0,T;L^{2}(\Omega))$, and $p \in L^{\infty}(0,T; Q \cap H^k(\Omega))$.
		\end{assumption}
	}
	We assume the following error estimate for the finite element solution of \eqref{wfwf} used to compute the velocity and pressure snapshots:
	\begin{assumption}\label{assump-FE}
		We assume that the finite element errors satisfy the following error estimates
		\begin{equation*}
		\begin{aligned}
		\sup_{n} \|u^{n} - u^{n}_{h} \|^{2} + h^{2} \|\nabla(u^{n}-u^{n}_{h})\|^{2} &\leq C(\nu,p)(h^{2s + 2} + \Delta t^{2})
		\\
		\sup_{n} \|p^{n} - p^{n}_{h} \|^{2}  &\leq C(\nu,p)(h^{2\ell} + \Delta t^{2}).
		\end{aligned}
		\end{equation*}
	\end{assumption}
	
	\begin{remark}
		Error estimates of this form have been proven for varying amounts of regularity on the continuous solution $u$ and $p$. Some examples include the incremental pressure correction schemes in \cite{G99} and chapter 7 of \cite{P97}. 
	\end{remark}
	
	The full space and time model on which we base our method is a backward Euler based artificial compression scheme with a Taylor-Hood spatial discretization, i.e., $P^{s}-P^{s-1}$ with $s \geq 2$. Given $u^{0}_h \in X_h$, $p^{0}_h \in Q_h$  for $n=0,1,2,\ldots,N-1$, find $u^{n+1}_h\in X_h$ and $p_h^{n+1}\in Q_h$ satisfying
	\begin{equation}\label{AC_FEM}
	\begin{aligned}
	&\Big(\frac{u^{n+1}_h - u^{n}_h}{\Delta t}, v_h \Big) + b^{\ast}(u_h^{n} , u^{n+1}_h ,v_h) +\nu (\nabla u^{n+1}_h, \nabla v_h) \\
	&- (p^{n+1}_h , \nabla \cdot v_h)  =( f^{n+1}, v_h) \quad \quad \qquad \forall v_h\in X_h\\
	&\varepsilon \left(\frac{p_{h}^{n+1} - p_{h}^{n}}{\Delta t},q_h \right) + (\nabla \cdot u_h^{n+1}, q_h )= 0 \qquad \forall q_h\in Q_h.
	\end{aligned}
	\end{equation}
	
	\section{Proper Orthogonal Decomposition}
	\label{POD_sec}
	In this section we briefly describe the POD method and apply it to the previously stated artificial compression algorithm. A more detailed description of this method can be found in \cite{KV01}.
	
	Given a positive integer $N$, let $0=t_0<t_1< \cdots < t_{N} = T$ denote a uniform partition of the time interval $[0,T]$.  Denote by $u_{h,S}^{n}(x)\in X_h$, $p_{h,S}^{n}(x)\in Q_h$, $n=0,\ldots,N$, the finite element solution to \eqref{AC_FEM} evaluated at $t=t_n$, $n=1,\ldots,N$. 
	
	We denote by ${u}_S^{n}$ and ${p}_S^{n}$ the vector of coefficients corresponding to the finite element functions $u_{h,S}^{n}(x)$ and $p_{h,S}^{n}$. We then define the {\em velocity snapshot matrix} $\mathbb{A}$ and {\em pressure snapshot matrix} $\mathbb{B}$ as
	\begin{equation*}
	\begin{aligned}
	\mathbb{A} = \big({u}_S^{1},{u}_S^{2}, \ldots , {u}_S^{N_{V}})\ \ \text{and} \ \ \mathbb{B} = \big({p}_S^{1},{p}_S^{2}, \ldots , {p}_S^{N_P}),
	\end{aligned}
	\end{equation*}
	%
	i.e., the columns of $\mathbb{A}$ and $\mathbb{B}$ are the finite element coefficient vectors corresponding to the discrete snapshots. The POD method then seeks a low-dimensional basis 
	\begin{equation*}
	X_R :=\text{span}\{{\varphi}_i\}_{i=1}^R  \subset X_h \ \ \text{and} \ \
	Q_M :=\text{span}\{{\psi}_i\}_{i=1}^M  \subset Q_h,
	\end{equation*}
	which can approximate the snapshot data. Let $\delta_{ij}$ denote the Kronecker delta. These bases can be determined by solving the constrained minimization problems
	\begin{equation}\label{Min-velocity}
	\begin{aligned}
	\min  \sum_{n=0}^{N} \Big \|  u_{h,s}^{n}-\sum_{j=1}^R (u_{h,s}^{n}, \varphi_j)\varphi_j\Big \| ^2 \\
	\text{subject to } (\varphi_i, \varphi_j)= \delta_{ij}\quad\mbox{for $i,j=1,\ldots,R$},
	\end{aligned}
	\end{equation}
	and 
	\begin{equation}\label{Min-pressure}
	\begin{aligned}
	\min  \sum_{n=0}^{N_{}} \Big \|  p_{h,s}^{n}-\sum_{j=1}^M (p_{h,s}^{n}, \psi_j)\psi_j\Big \| ^2 \\
	\text{subject to } (\psi_i, \psi_j)= \delta_{ij}\quad\mbox{for $i,j=1,\ldots,M$}.
	\end{aligned}
	\end{equation}
	Defining the correlation matrices  $\mathbb{C} = \mathbb{A}^{T}\mathbb{M}\mathbb{A}$ and $\mathbb{D} = \mathbb{B}^{T}\mathbb{M}\mathbb{B}$, where $\mathbb{M}$ denotes the finite element mass matrix, these problems can then be solved by considering the eigenvalue problems
	\begin{equation*}
	\mathbb{C}\vec{a}_{i} = \lambda_{i}\vec{a}_{i}.
	\end{equation*}
	and
	\begin{equation*}
	\mathbb{D}\vec{b}_{i} = \sigma_{i}\vec{b}_{i}.
	\end{equation*}
	It can then be shown the POD basis functions will be given by
	\begin{equation*}
	\vec\varphi_i = \frac{1}{\sqrt{\lambda_i}}\mathbb{A}\vec{a}_{i}, \ \ \ i = 1, \ldots, R.
	\end{equation*}
	and
	\begin{equation*}
	\vec\psi_i = \frac{1}{\sqrt{\sigma_i}}\mathbb{B}\vec{b}_{i}, \ \ \ i = 1, \ldots, M.
	\end{equation*}
	
	Using this POD basis we can now construct the AC-ROM algorithm. The construction is similar to  the full finite element approximation except we seek a solution in the POD space $(X_{R},Q_{M})$ using the basis $(\{ \varphi_{i}\}_{i=1}^{R},\{\psi_k\}_{k=1}^{M})$. The fully discrete algorithm for the AC-ROM algorithm can be written as:
	\begin{subequations}\label{AC_ROM}
		\begin{align}
		&\Big(\frac{u^{n+1}_R - u^{n}_R}{\Delta t}, \varphi \Big) + b^{\ast}(u_R^{n} , u^{n+1}_R ,\varphi) +\nu (\nabla u^{n+1}_R, \nabla \varphi) \label{AC_ROM_velocity} \\
		&- (p^{n+1}_M , \nabla \cdot \varphi)  =( f^{j,n+1}, \varphi) \hspace{2.2cm} \forall \varphi \in X_R, \nonumber\\
		&\varepsilon \left(\frac{p_{M}^{n+1} - p_{M}^{n}}{\Delta t}, \psi \right) + (\nabla \cdot u_R^{n+1}, \psi )= 0 \qquad \forall \psi \in Q_M. \label{AC_ROM_pressure}
		\end{align}
	\end{subequations}

	

	\section{Stability}
	\label{sec:stability}
	In this section we prove the unconditional, nonlinear, longtime stability of the AC-ROM  algorithm. 
	\begin{theorem}[Unconditional Stability of AC-ROM]\label{thm=stab-AC}
		For any $n$, we have the energy equality 
		\begin{gather*}
		\norm{u_R^{N+1}}^2+\epsilon \norm{p_M^{N+1}}^2+\sum_{n=0}^N\left(\norm{u_R^{n+1}-u_R^{n}}^2+\epsilon\dt\norm{p_M^{n+1}-p_M^{n}}^2\right)\\
		+2\dt\nu\sum_{n=0}^{N}\norm{\Grad{u_R^{n+1}}}^2 = \norm{u_R^0}^2+\epsilon \norm{p_M^0}+2\dt\sum_{n=0}^{N}(f^{n+1},u_R^{n+1})
		\end{gather*}
		and energy inequality
		\begin{gather*}
		\norm{u_R^{N+1}}^2+\epsilon\norm{p_M^{N+1}}^2+\sum_{n=0}^N\left(\norm{u_R^{n+1}-u_R^{n}}^2+\epsilon\norm{p_M^{n+1}-p_M^{n}}^2\right)\\
		+\dt\nu\sum_{n=0}^{N}\norm{\Grad{u_R^{n+1}}}^2 \leq \norm{u_R^0}^2+\epsilon\norm{p_M^0}+\frac{4\dt}{\nu}\sum_{n=0}^{N}\norm[-1]{f^{n+1}}^2.
		\end{gather*}
	\end{theorem}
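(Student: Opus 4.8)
The plan is to derive both identities from a single per-step energy balance obtained by testing each equation in \eqref{AC_ROM} against its own unknown. First I would set $\varphi = u_R^{n+1}$ in the velocity equation \eqref{AC_ROM_velocity} and $\psi = p_M^{n+1}$ in the pressure equation \eqref{AC_ROM_pressure}. The nonlinear term then drops immediately: by the explicit skew-symmetry of $b^{\ast}$ one has $b^{\ast}(u_R^{n}, u_R^{n+1}, u_R^{n+1}) = 0$, which is exactly why no timestep restriction will appear. The viscous term becomes $\nu\|\nabla u_R^{n+1}\|^2$ and the velocity right-hand side is $(f^{n+1}, u_R^{n+1})$.

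Next I would treat the two discrete time derivatives with the polarization identity $(a-b,a) = \tfrac{1}{2}(\|a\|^2 - \|b\|^2 + \|a-b\|^2)$, which turns $(u_R^{n+1}-u_R^{n}, u_R^{n+1})$ into $\tfrac{1}{2}(\|u_R^{n+1}\|^2 - \|u_R^{n}\|^2 + \|u_R^{n+1}-u_R^{n}\|^2)$ and does the same for the pressure increment. The crucial structural observation is that upon adding the two tested equations the coupling terms cancel exactly: the velocity equation contributes $-(p_M^{n+1}, \nabla\cdot u_R^{n+1})$ while the pressure equation contributes $+(\nabla\cdot u_R^{n+1}, p_M^{n+1})$, and these are negatives of one another by symmetry of the $L^2$ inner product. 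This cancellation is the only place where the specific form of the artificial-compression coupling is used, and it is what makes the coupled scheme energy stable despite the velocity field not being weakly divergence-free.

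Multiplying the resulting balance by $2\dt$ and summing over $n = 0,\ldots,N$ makes the $\|u_R\|^2$ and $\epsilon\|p_M\|^2$ terms telescope, leaving only the endpoint values $\|u_R^{N+1}\|^2$, $\epsilon\|p_M^{N+1}\|^2$ and $\|u_R^{0}\|^2$, $\epsilon\|p_M^{0}\|^2$, together with the retained increment sums and the accumulated dissipation $2\dt\nu\sum_{n=0}^{N}\|\nabla u_R^{n+1}\|^2$. This is precisely the claimed energy equality.

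For the energy inequality I would start from the equality and bound the forcing term by the dual-norm estimate $(f^{n+1}, u_R^{n+1}) \le \|f^{n+1}\|_{-1}\|\nabla u_R^{n+1}\|$, followed by Young's inequality, choosing the Young parameter so that the $\|\nabla u_R^{n+1}\|^2$ contribution absorbs at most half of the dissipation $2\dt\nu\sum_{n}\|\nabla u_R^{n+1}\|^2$. This leaves $\dt\nu\sum_{n}\|\nabla u_R^{n+1}\|^2$ on the left and yields the forcing term of the form $\tfrac{C\dt}{\nu}\sum_{n}\|f^{n+1}\|_{-1}^2$ on the right, with the constant fixed by the chosen parameter. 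Honestly there is no real obstacle here; the argument is a standard backward-Euler energy estimate, and the only points requiring care are the sign bookkeeping in the pressure cancellation and checking that $\varphi = u_R^{n+1}$ and $\psi = p_M^{n+1}$ are admissible, which holds since they lie in $X_R$ and $Q_M$ respectively.
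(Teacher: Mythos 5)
Your proposal is correct and follows essentially the same route as the paper's proof: test with $u_R^{n+1}$ and $p_M^{n+1}$ (the paper scales the test functions by $2\dt$ up front rather than multiplying afterwards, an immaterial difference), use skew-symmetry, the polarization identity, and the exact cancellation of the pressure--velocity coupling terms, then telescope and apply the dual-norm/Young estimate for the inequality. Your Young's-inequality split even yields a sharper constant ($1$ rather than $4$) in front of $\frac{\dt}{\nu}\sum_{n}\norm[-1]{f^{n+1}}^2$, which of course still implies the stated bound.
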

	
	\begin{proof}
		Let $\varphi = 2\dt u_R^{n+1}$ and $\psi = 2\dt p_M^{n+1}$ in \eqref{AC_ROM}. By the polarization identity and skew-symmetry of the nonlinearity, we have
		\begin{gather*}
		\norm{u_R^{n+1}}^2-\norm{u_R^{n}}^2+\norm{u_R^{n+1}-u_R^{n}}^2+2\dt\nu\norm{\Grad{u_R^{n+1}}}^2\\
		-2\dt(p_M^{n+1},\Div{u_R^{n+1}}) = 2\dt(f^{n+1},u_R^{n+1}),\\
		\epsilon\left(\norm{p_M^{n+1}}^2-\norm{p_M^{n}}^2+\norm{p_M^{n+1}-p_M^{n}}^2\right)+2\dt(\Div{u_R^{n+1}},p_M^{n+1}) = 0.
		\end{gather*}
		Adding the two equations gives
		\begin{gather*}
		\norm{u_R^{n+1}}^2-\norm{u_R^{n}}^2+\norm{u_R^{n+1}-u_R^{n}}^2+\epsilon\left(\norm{p_M^{n+1}}^2-\norm{p_M^{n}}^2+\norm{p_M^{n+1}-p_M^{n}}^2\right)\\
		+2\dt\nu\norm{\Grad{u_R^{n+1}}}^2 = 2\dt(f^{n+1},u_R^{n+1}).
		\end{gather*}
		Summing from $n=0~\text{to}~N$ gives the energy equality above. By definition of the dual norm and Young's inequality, we have the energy inequality
		\begin{gather*}
		\norm{u_R^{N+1}}^2+\epsilon\norm{p_M^{N+1}}^2+\sum_{n=0}^N\left(\norm{u_R^{n+1}-u_R^{n}}^2+\epsilon\dt\norm{p_M^{n+1}-p_M^{n}}^2\right)\\
		+\dt\nu\sum_{n=0}^{N}\norm{\Grad{u_R^{n+1}}}^2 \leq \norm{u_R^0}^2+\epsilon\norm{p_M^0}+\frac{4\dt}{\nu}\sum_{n=0}^{N}\norm[-1]{f^{n+1}}^2,
		\end{gather*}
		proving unconditional stability.
	\end{proof}

	\section{Error Analysis}\label{sec:error-analysis}
	Next we provide an error analysis for the AC-ROM scheme. We begin by stating preliminary results.
	
	Let $\mathbb{S}_{R} = (\nabla \varphi_{i}, \nabla \varphi_{j})_{L^{2}}$ be the POD stiffness matrix and let $|||\cdot|||_{2}$ denote the matrix $2$-norm. It was shown in \cite{KV01} that this POD basis satisfies the following inverse inequality.
	\begin{lemma}[POD inverse estimate]
		\begin{equation}\label{POD:inveq}
		\|\nabla \varphi \| \leq |||\mathbb{S}_{R}|||_{2}^{1/2}\|\varphi\|, \ \ \ \forall \varphi \in X_{R}.
		\end{equation}
	\end{lemma}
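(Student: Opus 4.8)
The plan is to reduce the inequality to an elementary statement about a quadratic form and its matrix $2$-norm. First I would expand an arbitrary $\varphi \in X_R$ in the POD basis, writing $\varphi = \sum_{i=1}^R c_i \varphi_i$ with coefficient vector $\vec{c} = (c_1,\ldots,c_R)^T$. Because the POD basis is $L^2$-orthonormal by construction (the constraint $(\varphi_i,\varphi_j)=\delta_{ij}$ in the minimization problem \eqref{Min-velocity}), the $L^2$ norm of $\varphi$ coincides with the Euclidean norm of its coefficient vector:
\[
\|\varphi\|^2 = \sum_{i=1}^R c_i^2 = \vec{c}^{\,T}\vec{c}.
\]

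Next I would compute the gradient norm as a quadratic form in the same coefficients. Bilinearity of the $L^2$ inner product gives
\[
\|\nabla \varphi\|^2 = \sum_{i=1}^R \sum_{j=1}^R c_i c_j\, (\nabla \varphi_i, \nabla \varphi_j) = \vec{c}^{\,T}\mathbb{S}_{R}\vec{c},
\]
where $\mathbb{S}_{R}$ is precisely the POD stiffness matrix defined above. Observe that $\mathbb{S}_{R}$ is the Gram matrix of the gradients $\{\nabla \varphi_i\}_{i=1}^R$, hence symmetric and positive semidefinite.

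The final step is to bound the quadratic form by its extreme eigenvalue. For a symmetric positive semidefinite matrix the induced $2$-norm equals the largest eigenvalue, so the Rayleigh quotient estimate yields $\vec{c}^{\,T}\mathbb{S}_{R}\vec{c} \leq |||\mathbb{S}_{R}|||_2\, \vec{c}^{\,T}\vec{c}$. Combining the three displays gives $\|\nabla \varphi\|^2 \leq |||\mathbb{S}_{R}|||_2\,\|\varphi\|^2$, and taking square roots proves the claim.

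There is no serious obstacle here: the entire argument is a translation between the function-space norms and the coefficient-vector norms, made possible by the $L^2$-orthonormality of the POD basis, followed by a standard Rayleigh quotient bound. The only point requiring a modicum of care is justifying that the matrix $2$-norm of the symmetric matrix $\mathbb{S}_{R}$ coincides with its spectral radius, which is classical; I would simply cite it or invoke the spectral theorem.
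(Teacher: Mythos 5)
Your proof is correct, and it is essentially the standard argument: the paper itself does not prove this lemma but defers to \cite{KV01}, where the proof proceeds exactly as you describe --- expand $\varphi$ in the $L^2$-orthonormal POD basis, identify $\|\varphi\|^2$ with $\vec{c}^{\,T}\vec{c}$ and $\|\nabla\varphi\|^2$ with the quadratic form $\vec{c}^{\,T}\mathbb{S}_R\vec{c}$, and apply the Rayleigh quotient bound for the symmetric positive semidefinite Gram matrix $\mathbb{S}_R$. No gaps; the one point you flag (that the $2$-norm of a symmetric PSD matrix equals its largest eigenvalue) is indeed classical and citing the spectral theorem suffices.
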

	The norm $|||\mathbb{S}_{R}|||_{2}$ on the right hand side of \eqref{POD:inveq} depends on the choice of the POD basis with no universal pattern of growth with R (their number). Since $R$ is small, $|||\mathbb{S}_{R}|||_{2}$ can be precomputed giving a precise number for the right hand side of \eqref{POD:inveq}.
	
	We define the $L^{2}$ projection into the velocity space $X_{R}$, and the pressure space $Q_{M}$ as follows.  
	\begin{definition} Let $P_{R}: L^{2}(\Omega) \rightarrow X_{R}$ and $\chi_{M}: L^{2}(\Omega) \rightarrow Q_{M}$ such that
		\begin{equation}
		\begin{aligned}
		(u - P_{R}u,\varphi) &= 0, \qquad \forall \varphi \in X_{R}, \ \ \text{and} \\
		(p - \chi_{M}p, \psi) &= 0, \qquad \forall \psi \in Q_{M}.
		\end{aligned}
		\end{equation}
	\end{definition}
	
	The following lemmas, proven in \cite{KV01,singler2014new}, 
	provide bounds for the error between the snapshots and their projections onto the POD space. 
	
	\begin{lemma}\label{v-proj-errL2}[$L^{2}$ POD projection error] With $\lambda_i$ the eigenvalues of $\mathbb{C} = \mathbb{A}^{T}\mathbb{M}\mathbb{A}$, we have
		\begin{equation}
		\begin{aligned}
		\sum_{n = 0}^{N} \left \|u_{h,s}^{n} - \sum_{i=1}^{R}(u_{h,s}^{n},{\varphi_i}){\varphi}_i \right \|^{2}  &= \sum_{i=R+1}^{N_{V}} {\lambda_i}, \ \ \text{and} \\
		\sum_{n = 0}^{N} \left \| p_{h,s}^{n} - \sum_{i=1}^{M}(p_{h,s}^{n},\psi_i)\psi_i \right \|^{2} &= \sum_{i = M + 1}^{N_P} \sigma_i. 
		\end{aligned}
		\end{equation}
	\end{lemma}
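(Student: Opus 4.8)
The plan is to establish the identity via the standard spectral argument for POD, following \cite{KV01}. The crucial bridge between the continuous and algebraic pictures is that the $L^2(\Omega)$ inner product of two finite element functions equals the $\mathbb{M}$-weighted Euclidean inner product of their coefficient vectors, i.e. $(u_h,v_h) = \vec{u}^{\,T}\mathbb{M}\vec{v}$, where $\mathbb{M}$ is the mass matrix. First I would record that $\mathbb{C} = \mathbb{A}^{T}\mathbb{M}\mathbb{A}$ is symmetric and positive semidefinite, so it admits an $\ell^2$-orthonormal eigenbasis $\{\vec{a}_i\}_{i=1}^{N_{V}}$ with $\lambda_1\geq\cdots\geq\lambda_{N_V}\geq 0$. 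Using the defining relation $\vec\varphi_i = \lambda_i^{-1/2}\mathbb{A}\vec{a}_i$, a direct computation shows the POD modes are $L^2$-orthonormal:
\[
(\varphi_i,\varphi_j) = \vec\varphi_i^{\,T}\mathbb{M}\vec\varphi_j
= \frac{1}{\sqrt{\lambda_i\lambda_j}}\vec{a}_i^{\,T}\mathbb{C}\vec{a}_j
= \frac{\lambda_j}{\sqrt{\lambda_i\lambda_j}}\vec{a}_i^{\,T}\vec{a}_j
= \delta_{ij}.
\]

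Second, since the columns of $\mathbb{A}$ are exactly the snapshot coefficient vectors and the complete family $\{\varphi_i\}_{i=1}^{N_V}$ spans the range of $\mathbb{A}$, each snapshot lies in $\mathrm{span}\{\varphi_i\}$. Hence Parseval's identity applies \emph{exactly}, and the truncated projection error collapses onto the discarded coefficients:
\[
\left\| u_{h,s}^{n} - \sum_{i=1}^{R}(u_{h,s}^{n},\varphi_i)\varphi_i\right\|^2
= \sum_{i=R+1}^{N_V} \bigl|(u_{h,s}^{n},\varphi_i)\bigr|^2 .
\]

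Third --- and this is the computational heart --- I would sum the previous display over $n$ and interchange the order of summation, then evaluate $\sum_{n}|(u_{h,s}^{n},\varphi_i)|^2$ by collecting these inner products into a single vector. Using $(u_{h,s}^{n},\varphi_i) = (\vec u_S^{\,n})^{T}\mathbb{M}\vec\varphi_i$ and that the $\vec u_S^{\,n}$ are the columns of $\mathbb{A}$, the stacked coefficients equal $\mathbb{A}^{T}\mathbb{M}\vec\varphi_i = \lambda_i^{-1/2}\mathbb{A}^{T}\mathbb{M}\mathbb{A}\vec{a}_i = \lambda_i^{-1/2}\mathbb{C}\vec{a}_i = \sqrt{\lambda_i}\,\vec{a}_i$. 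Taking Euclidean norms and using $\|\vec{a}_i\|=1$ yields $\sum_{n}|(u_{h,s}^{n},\varphi_i)|^2 = \lambda_i$, whence the velocity identity follows. The pressure identity is obtained verbatim with $(\mathbb{B},\mathbb{D},\sigma_i,\psi_i,N_P)$ in place of $(\mathbb{A},\mathbb{C},\lambda_i,\varphi_i,N_V)$.

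The only genuine subtlety, and the step I would watch most carefully, is the exactness of the Parseval reduction: it requires that the snapshots truly lie in the span of the full POD basis, equivalently that every mode associated with a nonzero eigenvalue is retained in the complete expansion (the truncation to $R$ occurs only afterward). Once $L^2$-orthonormality and this spanning property are secured, the remainder is the bookkeeping identity $\mathbb{A}^{T}\mathbb{M}\vec\varphi_i = \sqrt{\lambda_i}\,\vec{a}_i$, where the eigenvalue relation $\mathbb{C}\vec{a}_i = \lambda_i\vec{a}_i$ does all the essential work.
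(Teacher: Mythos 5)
Your proof is correct and follows essentially the same spectral argument as the source the paper relies on: the paper does not prove this lemma itself but cites \cite{KV01,singler2014new}, and your chain ($L^2$-orthonormality of the modes via $\mathbb{C}=\mathbb{A}^{T}\mathbb{M}\mathbb{A}$, exact Parseval expansion of the snapshots in the full POD basis, then $\mathbb{A}^{T}\mathbb{M}\vec\varphi_i=\sqrt{\lambda_i}\,\vec{a}_i$ to evaluate the discarded coefficients) is precisely the standard argument given there. Your closing caution about modes with nonzero eigenvalues being retained in the complete expansion is also the right subtlety to flag, since $\vec\varphi_i=\lambda_i^{-1/2}\mathbb{A}\vec{a}_i$ is only defined for $\lambda_i>0$.
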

	
	\begin{lemma}\label{v-proj-errH1}[$H^{1}$ POD projection error] We have
		\begin{equation}
		\begin{aligned}
		\sum_{n = 0}^{N} \left \|\nabla(u_{h,s}^{n} - \sum_{i=1}^{R}(u_{h,s}^{n},{\varphi_i}){\varphi}_i) \right \|^{2}  &=  \sum_{i=R+1}^{N_{V}} \|  \nabla {\varphi}_i\| ^2\lambda_i.
		\end{aligned}
		\end{equation}
	\end{lemma}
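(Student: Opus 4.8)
The plan is to reduce everything to a single algebraic identity relating the snapshot--basis inner products to the eigenvalues of the correlation matrix $\mathbb{C}$, and then carry out a short orthogonality computation. Since the POD basis $\{\varphi_i\}_{i=1}^{N_V}$ is $L^{2}$-orthonormal and spans the snapshot set, each snapshot admits the exact finite expansion $u_{h,s}^{n} = \sum_{i=1}^{N_V}(u_{h,s}^{n},\varphi_i)\varphi_i$. Consequently the truncation error is precisely the tail of this series, so that $\nabla\big(u_{h,s}^{n} - \sum_{i=1}^{R}(u_{h,s}^{n},\varphi_i)\varphi_i\big) = \sum_{i=R+1}^{N_V}(u_{h,s}^{n},\varphi_i)\nabla\varphi_i$. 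First I would substitute this identity, expand the squared $L^{2}$ norm into a double sum over indices $i,j\geq R+1$, and interchange the order of summation so that the sum over the snapshot index $n$ becomes innermost.

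The crux is the identity
\begin{equation*}
\sum_{n=0}^{N}(u_{h,s}^{n},\varphi_i)(u_{h,s}^{n},\varphi_j) = \lambda_i\,\delta_{ij}.
\end{equation*}
To establish it I would pass to coefficient vectors: writing $\mathbb{M}$ for the mass matrix and using $\vec\varphi_i = \lambda_i^{-1/2}\mathbb{A}\vec a_i$, the inner product becomes $(u_{h,s}^{n},\varphi_i) = (\vec u_s^{n})^{T}\mathbb{M}\vec\varphi_i = \lambda_i^{-1/2}(\vec u_s^{n})^{T}\mathbb{M}\mathbb{A}\vec a_i$. Because $\vec u_s^{n}$ is the $n$-th column of $\mathbb{A}$, the row vector $(\vec u_s^{n})^{T}\mathbb{M}\mathbb{A}$ is the $n$-th row of $\mathbb{C}=\mathbb{A}^{T}\mathbb{M}\mathbb{A}$, so the expression equals $\lambda_i^{-1/2}(\mathbb{C}\vec a_i)_n = \lambda_i^{-1/2}\lambda_i(\vec a_i)_n = \sqrt{\lambda_i}\,(\vec a_i)_n$, where I used $\mathbb{C}\vec a_i=\lambda_i\vec a_i$. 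Summing the product over $n$ then yields $\sqrt{\lambda_i\lambda_j}\,\vec a_i^{T}\vec a_j = \sqrt{\lambda_i\lambda_j}\,\delta_{ij} = \lambda_i\delta_{ij}$, the orthonormality of the eigenvectors $\{\vec a_i\}$ of the symmetric matrix $\mathbb{C}$ supplying the Kronecker delta.

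Finally I would feed this identity back into the double sum: the factor $\delta_{ij}$ collapses it to the single sum $\sum_{i=R+1}^{N_V}\lambda_i(\nabla\varphi_i,\nabla\varphi_i) = \sum_{i=R+1}^{N_V}\lambda_i\|\nabla\varphi_i\|^{2}$, which is the claimed right-hand side. The main obstacle is conceptual rather than technical: unlike the $L^{2}$ estimate of Lemma~\ref{v-proj-errL2}, where the $L^{2}$-orthonormality $(\varphi_i,\varphi_j)=\delta_{ij}$ immediately diagonalizes the double sum, here the gradients $\nabla\varphi_i$ are \emph{not} orthonormal, so the diagonalization must instead be produced by the summation over snapshots through the identity above. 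Recognizing that the orthogonality is supplied by the eigenvector structure of $\mathbb{C}$ rather than by the basis functions themselves is the key step; the remaining manipulations are routine.
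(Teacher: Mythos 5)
Your proposal is correct and is essentially the standard argument from the references the paper cites for this lemma (the paper itself gives no in-text proof, deferring to Kunisch--Volkwein and Singler): expand the truncation error in the tail modes, and diagonalize the double sum via the identity $\sum_{n}(u_{h,s}^{n},\varphi_i)(u_{h,s}^{n},\varphi_j)=\lambda_i\delta_{ij}$, which follows from $(u_{h,s}^{n},\varphi_i)=\sqrt{\lambda_i}\,(\vec a_i)_n$ and the orthonormality of the eigenvectors of $\mathbb{C}$. The only cosmetic caveat is that the exact expansion of the snapshots, and the formula $\vec\varphi_i=\lambda_i^{-1/2}\mathbb{A}\vec a_i$, are only meaningful for modes with $\lambda_i>0$, i.e.\ up to the rank of $\mathbb{C}$; modes with $\lambda_i=0$ contribute nothing to either side, so the identity as stated is unaffected.
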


	The following error estimates then follow easily for the $L^{2}$ projection error into the velocity space $X_{R}$ using the techniques in \cite{KV01,singler2014new}.
	\begin{lemma}\label{pod-velo-proj-lemma}
		For any $u^{n} \in V$ the $L^{2}$ projection error into $X_{R}$  satisfies the following estimates
		\begin{equation}\label{proj-err-1}
		\begin{aligned}
		&\sup_{n} \|u^{n} - P_{R}u^{n}\|^{2} \leq C(\nu,p) \left (h^{2s  +2} + \Delta t^{2} + \sum_{i= R+1}^{N_V} {\lambda}_{i} \right), \ \ \text{and} \\
		& \sup_{n} \|\nabla(u^{n} - P_{R}u^{n})\|^{2} \leq {C(\nu,p)}\bigg(h^{2s} + |||{\mathbb S}_R|||_{2} h^{2s + 2}   + (1+|||{\mathbb S}_R|||_{2})\Delta t^2
		\\ & \hspace{4cm} + \sum_{i=R+1}^{N_{V}} \|  \nabla {\varphi}_i\| ^2\lambda_i\bigg) . 
		\end{aligned}
		\end{equation}
	\end{lemma}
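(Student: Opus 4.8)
The plan is to compare $u^n$ with its $L^2$ projection $P_R u^n$ by inserting two intermediate objects: the finite element snapshot $u^n_h$ and its own $L^2$ projection onto $X_R$. The first observation is that, because $\{\varphi_i\}_{i=1}^R$ is $L^2$-orthonormal, the truncated expansion $\sum_{i=1}^R (u^n_h,\varphi_i)\varphi_i$ that appears in Lemmas~\ref{v-proj-errL2} and~\ref{v-proj-errH1} is exactly $P_R u^n_h$. Using linearity of $P_R$, I would then decompose
\[
u^n - P_R u^n = (u^n - u^n_h) + (u^n_h - P_R u^n_h) + P_R(u^n_h - u^n),
\]
apply the triangle inequality, and square using $(a+b+c)^2 \le 3(a^2+b^2+c^2)$. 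Both estimates reduce to bounding these same three pieces.

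For the $L^2$ estimate the three pieces are handled immediately: the first term is controlled by the finite element bound of Assumption~\ref{assump-FE}; the second, after replacing the supremum in $n$ by the finite sum over $n$, is bounded by the $L^2$ POD projection error of Lemma~\ref{v-proj-errL2}, yielding $\sum_{i=R+1}^{N_V}\lambda_i$; and the third is handled by the contraction property $\|P_R w\|\le\|w\|$ of the orthogonal projection, so that $\|P_R(u^n_h - u^n)\| \le \|u^n - u^n_h\|$ is again absorbed into Assumption~\ref{assump-FE}. Collecting the three contributions gives the first inequality.

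For the gradient estimate I would use the same decomposition, but the third term is the crux. Since $P_R$ is an $L^2$ projection it is not $H^1$-stable, so $\|\nabla P_R(u^n_h - u^n)\|$ cannot be bounded directly by $\|\nabla(u^n_h - u^n)\|$. The key is that $P_R(u^n_h - u^n)\in X_R$, so the POD inverse estimate~\eqref{POD:inveq} applies and trades the gradient for an $L^2$ quantity at the cost of the factor $|||\mathbb{S}_R|||_2^{1/2}$, namely $\|\nabla P_R(u^n_h-u^n)\| \le |||\mathbb{S}_R|||_2^{1/2}\|P_R(u^n_h-u^n)\| \le |||\mathbb{S}_R|||_2^{1/2}\|u^n_h-u^n\|$; squaring and invoking Assumption~\ref{assump-FE} produces precisely the weighted terms $|||\mathbb{S}_R|||_2\, h^{2s+2}$ and $|||\mathbb{S}_R|||_2\,\Delta t^2$. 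The first term is the finite element $H^1$ velocity error, which Assumption~\ref{assump-FE} bounds by (a constant times) $h^{2s}+\Delta t^2$, giving the unweighted $h^{2s}$ and $\Delta t^2$ contributions; the second term, after passing from the supremum to the sum over $n$, is bounded by the $H^1$ POD projection error of Lemma~\ref{v-proj-errH1}, yielding $\sum_{i=R+1}^{N_V}\|\nabla\varphi_i\|^2\lambda_i$. Summing the three contributions gives the second inequality.

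I expect the only genuine obstacle to be the gradient of the projection-difference term $P_R(u^n_h - u^n)$; once one recognizes that it lives in $X_R$ and applies the inverse estimate~\eqref{POD:inveq}, everything else is routine use of the triangle inequality, the orthogonality and contractivity of $P_R$, and the snapshot and finite element error bounds already available.
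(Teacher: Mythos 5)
Your proposal is correct and is essentially the argument the paper intends: the paper omits an explicit proof (deferring to the standard techniques of \cite{KV01,singler2014new}), and those techniques are exactly your triangle-inequality split through the snapshot $u^n_h$ and its POD truncation, the $L^2$-contractivity of $P_R$, the bound $\sup_n \le \sum_n$ to invoke Lemmas \ref{v-proj-errL2} and \ref{v-proj-errH1}, and the POD inverse estimate \eqref{POD:inveq} applied to $P_R(u^n_h - u^n) \in X_R$ for the gradient bound. The only caveat, inherited from the paper itself rather than introduced by you, is that Assumption \ref{assump-FE} as literally stated yields $\|\nabla(u^n - u^n_h)\|^2 \le C(h^{2s} + \Delta t^2 h^{-2})$, so the clean $(1+|||\mathbb{S}_R|||_2)\Delta t^2$ term in \eqref{proj-err-1} implicitly presumes the unscaled gradient bound $\|\nabla(u^n - u^n_h)\|^2 \le C(h^{2s} + \Delta t^2)$, which your reading matches.
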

	
	Similarly for the $L^{2}$ projection into the pressure space the following can be proven.
	
	\begin{lemma}\label{pod-press-proj-lemma}
		For any $p^{n} \in Q$ the $L^{2}$ projection error satisfies the following estimates
		\begin{equation}\label{proj-err-2}
		\sup_{n} \|p^{n} - \chi_{M}p^{n}\|^{2} \leq C(\nu,p) \left (h^{2\ell} + \Delta t^{2} + \sum_{i= M+1}^{N_{P}} \sigma_{i} \right).
		\end{equation}
	\end{lemma}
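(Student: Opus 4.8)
The plan is to mirror the argument for the velocity projection in Lemma~\ref{pod-velo-proj-lemma}, but the pressure case is genuinely simpler because only the $L^2$ norm is required; consequently neither the POD inverse estimate \eqref{POD:inveq} nor the $H^1$ projection bound of Lemma~\ref{v-proj-errH1} will be needed. First I would insert the finite element pressure snapshot $p^n_{h,S}$ and split by the triangle inequality, $\|p^n - \chi_M p^n\| \le \|p^n - p^n_{h,S}\| + \|p^n_{h,S} - \chi_M p^n_{h,S}\| + \|\chi_M(p^n_{h,S} - p^n)\|$. Since $\chi_M$ is the $L^2$-orthogonal projection onto $Q_M$, it is a contraction in the $L^2$ norm, so the last term is bounded by $\|p^n_{h,S} - p^n\|$, and the estimate collapses to $\|p^n - \chi_M p^n\| \le 2\|p^n - p^n_{h,S}\| + \|p^n_{h,S} - \chi_M p^n_{h,S}\|$.

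For the first term I would invoke the finite element error estimate from Assumption~\ref{assump-FE}, identifying $p^n_{h,S}$ with the finite element pressure at the snapshot time $t_n$; this contributes $\sup_n \|p^n - p^n_{h,S}\|^2 \le C(\nu,p)(h^{2\ell} + \Delta t^2)$, which is exactly where the exponent $\ell$ in the statement enters. For the second term I would observe that because $\{\psi_i\}$ is $L^2$-orthonormal, the projection of a snapshot is precisely its truncated POD expansion, $\chi_M p^n_{h,S} = \sum_{i=1}^M (p^n_{h,S},\psi_i)\psi_i$, so Lemma~\ref{v-proj-errL2} applies directly and furnishes the summed identity $\sum_{n=0}^N \|p^n_{h,S} - \chi_M p^n_{h,S}\|^2 = \sum_{i=M+1}^{N_P}\sigma_i$.

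The one step deserving care is reconciling the $\sup_n$ on the left-hand side of the claim with the fact that Lemma~\ref{v-proj-errL2} controls only the sum over $n$; I would dispose of this with the elementary bound $\sup_n a_n \le \sum_n a_n$ for nonnegative terms, yielding $\sup_n \|p^n_{h,S} - \chi_M p^n_{h,S}\|^2 \le \sum_{i=M+1}^{N_P}\sigma_i$. Squaring the collapsed triangle inequality via $(a+b)^2 \le 2a^2 + 2b^2$, taking the supremum over $n$, and combining the two bounds then gives $\sup_n \|p^n - \chi_M p^n\|^2 \le C(\nu,p)(h^{2\ell} + \Delta t^2 + \sum_{i=M+1}^{N_P}\sigma_i)$ after absorbing constants into $C(\nu,p)$. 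The main obstacle is essentially bookkeeping rather than analysis: ensuring the cross term is genuinely absorbed by the contraction property of $\chi_M$, and that the passage from the supremum to the sum does not sacrifice the sharp eigenvalue tail. No additional analytic ingredient beyond Assumption~\ref{assump-FE} and Lemma~\ref{v-proj-errL2} is required.
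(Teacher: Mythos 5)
Your proof is correct and is essentially the argument the paper intends: the paper gives no explicit proof, stating only that the estimate follows ``similarly'' to Lemma~\ref{pod-velo-proj-lemma} via the techniques of \cite{KV01,singler2014new}, which are precisely your steps --- splitting through the finite element snapshot, invoking Assumption~\ref{assump-FE} for the $h^{2\ell}+\Delta t^2$ contribution, using the $L^2$-orthonormality of $\{\psi_i\}$ to identify $\chi_M p^n_{h,S}$ with the truncated POD expansion so that Lemma~\ref{v-proj-errL2} supplies the tail $\sum_{i=M+1}^{N_P}\sigma_i$, and bounding the supremum by the sum. Your observation that the contraction property of $\chi_M$ absorbs the cross term (equivalently, one could use the best-approximation property of the $L^2$ projection directly) is exactly the right bookkeeping, and no further ingredient is needed.
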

	Let $e_{u}$ and $e_{p}$ denote the error between the true velocity and pressure solution and their POD approximations respectively, For the error analysis we split the error for the velocity and the pressure using the $L^{2}$ projections into the spaces $X_{R}, Q_{M}$
	\begin{equation*}
	\begin{aligned}
	e^{n+1}_{u} = u^{n+1} - u^{n+1}_{R} = (u^{n+1} - P_{R}(u^{n+1})) + (P_R(u^{n+1}) - u^{n+1}_{R}) &= \eta^{n+1} - \xi_{R}^{n+1}
	\\ e^{n+1}_{p} = p^{n+1} - p^{n+1}_{M} = (p^{n+1} - \chi_{M}(p^{n+1})) + (\chi_M(p^{n+1}) - p^{n+1}_{M}) &= \kappa^{n+1} - \pi_{M}^{n+1}.
	\end{aligned}
	\end{equation*}
	
	We will see in Theorem \ref{theorem:1} that the convergence rate faces order reduction by a power of $\Delta t^{-1}$ term appearing in the error bound. This occurs due to the term $(\nabla \cdot \eta^{n+1} ,\pi_{M}^{n+1})$ arising from the continuity equation. Due to the fact the AC-ROM scheme proposed in this paper does not require the ROM velocity-pressure spaces to satisfy the $LBB_{h}$ condition, this order reduction cannot be eliminated via the usual Stokes projection. However, we will show in Theorem \ref{theorem:1} that even if the basis does not satisfy the $LBB_{h}$ condition this order reduction in the convergence rate will be improved by a multiplicative constant with size dependent upon the quality of the basis.
	
	To this end, we consider the subspace 
	\begin{equation}\label{X-div}
	X^{div}_R :=\text{span}\{\nabla \cdot \varphi_i\}_{i=1}^R  \subset L^{2}(\Omega),
	\end{equation}
	and recall from \cite{EV91} the strengthened Cauchy-Buniakowskii-Schwarz (CBS) inequality commonly used in the analysis of multilevel methods \cite{AG83,EV91}.
	
	\begin{lemma}\label{CBS}
		Given a Hilbert space V and two finite dimensional subspaces  $V_{1} \subset V$ and $V_{2} \subset V$ with trivial intersection: 
		\begin{equation*}
		V_{1} \cap V_{2} = \{0\},
		\end{equation*}
		then there exists $ 0 \leq \alpha < 1$ such that
		\begin{equation*}
		|(v_{1},v_{2})| \leq \alpha \|v_{1}\|\|v_{2}\| \ \ \ \forall v_1 \in V_{1}, v_2 \in V_{2}.
		\end{equation*} 
	\end{lemma}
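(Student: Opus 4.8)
The plan is to characterize the optimal constant variationally and then use finite-dimensionality to rule out the value $1$. Define
\[
\alpha := \sup \left\{ \frac{|(v_1, v_2)|}{\|v_1\| \, \|v_2\|} : v_1 \in V_1 \setminus \{0\}, \ v_2 \in V_2 \setminus \{0\} \right\},
\]
so that the claimed inequality holds by construction with this $\alpha$ (it is trivial when $v_1 = 0$ or $v_2 = 0$), and the ordinary Cauchy-Schwarz inequality gives $\alpha \leq 1$ at once. The entire content of the lemma is therefore the \emph{strict} inequality $\alpha < 1$.

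First I would reduce to the unit spheres. By homogeneity of the quotient I may restrict attention to the set $S := \{(v_1, v_2) : v_1 \in V_1, \ v_2 \in V_2, \ \|v_1\| = \|v_2\| = 1\}$, so that $\alpha = \sup_{(v_1,v_2) \in S} |(v_1, v_2)|$. This is the step where the hypothesis enters: since $V_1$ and $V_2$ are finite dimensional, their unit spheres are compact, hence $S$ is compact in the product topology. The map $(v_1, v_2) \mapsto |(v_1, v_2)|$ is continuous, so the supremum is in fact \emph{attained} at some pair $(\hat v_1, \hat v_2) \in S$, i.e.\ $|(\hat v_1, \hat v_2)| = \alpha$ with $\|\hat v_1\| = \|\hat v_2\| = 1$.

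The conclusion then follows by contradiction through the equality case of Cauchy-Schwarz. Suppose $\alpha = 1$. Then $|(\hat v_1, \hat v_2)| = 1 = \|\hat v_1\| \, \|\hat v_2\|$, and equality in Cauchy-Schwarz forces $\hat v_1$ and $\hat v_2$ to be linearly dependent: $\hat v_2 = c\, \hat v_1$ for a scalar $c$ with $|c| = 1$. But then $\hat v_1 \in V_1$ while simultaneously $\hat v_1 = c^{-1} \hat v_2 \in V_2$, so $\hat v_1 \in V_1 \cap V_2 = \{0\}$, contradicting $\|\hat v_1\| = 1$. Hence $\alpha < 1$, which is the assertion.

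I expect the only genuine obstacle to be the attainment of the supremum: without it, Cauchy-Schwarz yields merely $\alpha \leq 1$, and nothing rules out a supremum that approaches $1$ along a sequence of increasingly parallel vectors. Compactness of the unit spheres---precisely where finite-dimensionality of $V_1$ and $V_2$ is used---is exactly what upgrades ``$\leq 1$, never attained at $1$'' to ``$< 1$.'' Everything else is the standard equality characterization in Cauchy-Schwarz combined with the trivial-intersection hypothesis $V_1 \cap V_2 = \{0\}$.
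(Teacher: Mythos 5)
Your proof is correct. Note that the paper itself does not prove this lemma: it simply recalls the strengthened Cauchy--Buniakowskii--Schwarz inequality from the multilevel-methods literature (the reference cited as EV91), so there is no internal argument to compare against. Your route --- reduce to the product of unit spheres, use finite-dimensionality for compactness to get a maximizing pair $(\hat v_1,\hat v_2)$, then rule out $\alpha=1$ via the equality case of Cauchy--Schwarz and the hypothesis $V_1\cap V_2=\{0\}$ --- is the standard proof of this fact and correctly isolates where finite-dimensionality is indispensable (without attainment, the supremum could approach $1$, as indeed happens for suitable infinite-dimensional pairs). The only detail left implicit is the degenerate case $V_1=\{0\}$ or $V_2=\{0\}$, where the supremum is over the empty set and one simply takes $\alpha=0$.
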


	Considering $X^{div}_R$ and $Q_{M}$, we are interested in computing the exact constant  $\alpha$ between these spaces. This is equivalent to finding
	the first principal angle defined as 
	\begin{equation}\label{first_principal_angle}
	\theta_{1} := \min \left\{\arccos\left({\frac{|(v,\psi)|}{\|v\|\|\psi\|}}\right) \bigg| v \in X^{div}_{R}, \psi \in Q_{M} \right\},
	\end{equation} 
	with $0\leq \theta_{1} \leq \frac{\pi}{2}$. 
	
	The problem of computing angles between subspaces was introduced by Jordan in 1875 \cite{J1875} and studied by Friedrichs in 1937 \cite{F1937}. Recently, principal angles were  used to improve the accuracy of  reduced basis schemes for optimization problems in \cite{M14}. They can be calculated using either QR factorization or SVD of the orthogonal bases of the spaces in Lemma \ref{CBS}, as outlined in \cite{WS03}. More efficient and stable schemes for calculating principal angles were also developed in \cite{KA02}. We note that due to the relatively small size of the pressure and velocity reduced basis, the QR or SVD approach is sufficient in this setting. This procedure will be briefly outlined in section 6.
	
	Using the strengthened CBS inequality, we get the following bound on the error term arising from the continuity equation.
	
	\begin{lemma}\label{lemma:ortho_bd}
		Let $u^{n+1} = u(t^{n+1},x)$ be the exact solution of the NSE and let $\eta^{n+1} = u^{n+1} - P_{R}(u^{n+1})$ denote the projection error. Defining $\alpha = \cos(\theta_{1})$, where $\theta_{1}$ is given in \eqref{first_principal_angle} the following bound holds
		\begin{equation*}
		|(\nabla \cdot \eta^{n+1},\psi)| \leq \alpha \|\nabla \cdot  \eta^{n+1}\| \| \psi \| \qquad \forall \psi \in Q_{M}.
		\end{equation*}
	\end{lemma}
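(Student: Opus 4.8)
The plan is to recognize that the asserted bound is, once the correct membership is verified, an immediate instance of the strengthened CBS inequality of Lemma~\ref{CBS} applied to the pair of subspaces $V_1 = X^{div}_R$ and $V_2 = Q_M$. The entire content of the lemma therefore collapses to a single structural observation: that $\nabla \cdot \eta^{n+1}$ lies in $X^{div}_R$. After that, the characterization of $\alpha = \cos(\theta_1)$ as the largest cosine between the two subspaces supplies the constant for free.

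First I would establish the membership $\nabla \cdot \eta^{n+1} \in X^{div}_R$. Since $u^{n+1}$ is the exact NSE solution, it is weakly divergence-free, so $\nabla \cdot u^{n+1} = 0$. On the other hand, $P_R(u^{n+1}) \in X_R = \text{span}\{\varphi_i\}_{i=1}^R$, so we may write $P_R(u^{n+1}) = \sum_{i=1}^R c_i \varphi_i$ and hence, by the definition~\eqref{X-div},
\[
\nabla \cdot P_R(u^{n+1}) = \sum_{i=1}^R c_i\, \nabla \cdot \varphi_i \in X^{div}_R .
\]
Combining the two facts yields
\[
\nabla \cdot \eta^{n+1} = \nabla \cdot u^{n+1} - \nabla \cdot P_R(u^{n+1}) = -\nabla \cdot P_R(u^{n+1}) \in X^{div}_R .
\]

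With this in hand I would invoke the definition~\eqref{first_principal_angle} of the first principal angle. Because $\arccos$ is decreasing, minimizing the angle is the same as maximizing its cosine, so $\alpha = \cos(\theta_1)$ equals the supremum of $|(v,\psi)|/(\|v\|\,\|\psi\|)$ over nonzero $v \in X^{div}_R$ and $\psi \in Q_M$; this is precisely the constant furnished by Lemma~\ref{CBS} for the pair $(X^{div}_R, Q_M)$. Taking $v = \nabla \cdot \eta^{n+1}$ (the bound being trivial if $\nabla \cdot \eta^{n+1} = 0$) gives
\[
|(\nabla \cdot \eta^{n+1}, \psi)| \leq \alpha \,\|\nabla \cdot \eta^{n+1}\|\,\|\psi\| \qquad \forall \psi \in Q_M ,
\]
as claimed.

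The only genuinely delicate point is the membership $\nabla \cdot \eta^{n+1} \in X^{div}_R$, which hinges entirely on the exact velocity being weakly divergence-free: this is exactly what makes $\nabla \cdot \eta^{n+1}$ coincide with $-\nabla \cdot P_R(u^{n+1})$ and thereby places it in the span of the $\nabla \cdot \varphi_i$. I would also note for completeness that the strict improvement $\alpha < 1$ requires the trivial-intersection hypothesis $X^{div}_R \cap Q_M = \{0\}$ of Lemma~\ref{CBS}; without it one still obtains the stated inequality with $\alpha = \cos(\theta_1) \le 1$, but loses the gain that makes this estimate useful in the subsequent error analysis.
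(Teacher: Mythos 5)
Your proof is correct and follows essentially the same route as the paper: both use $\nabla \cdot u^{n+1} = 0$ to identify $\nabla \cdot \eta^{n+1}$ with $-\nabla \cdot P_R(u^{n+1}) \in X^{div}_R$ and then apply the strengthened CBS inequality of Lemma~\ref{CBS} with the constant $\alpha = \cos(\theta_1)$. The only cosmetic difference is that the paper substitutes inside the inner product and applies CBS to $\nabla \cdot P_R u^{n+1}$, whereas you note the subspace membership of $\nabla \cdot \eta^{n+1}$ itself, which is equivalent.
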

	\begin{proof}
		Since $u^{n+1}$ is the exact solution to the NSE it follows that $\nabla \cdot u^{n+1} = 0$ and therefore $(\nabla \cdot u^{n+1},\psi) = 0 \ \forall \psi \in Q_{M}$. This gives
		\begin{equation*}
		|(\nabla \cdot \eta^{n+1},\psi)| = |(\nabla \cdot u^{n+1} - \nabla \cdot P_{R}(u^{n+1}) ,\psi)| = |(\nabla \cdot  P_{R}u^{n+1},\psi)| \ \forall \psi \in Q_{M}.
		\end{equation*}
		It then follows from the fact that  $\nabla \cdot  P_{R}u^{n+1} \in X_{R}^{div}$, Lemma \ref{CBS}, and $\nabla \cdot u^{n+1} = 0$.
		\begin{equation*}
		|(\nabla \cdot  P_{R}u^{n+1},\psi)| \leq \alpha \|\nabla \cdot  P_{R}u^{n+1}\| \| \psi \| =  \alpha \|\nabla \cdot  \eta^{n+1}\| \| \psi \| \ \forall \psi \in Q_{M}.
		\end{equation*}
	\end{proof}
	
	
	We are now ready to state the full error estimate.

	\begin{theorem}\label{theorem:1}
		Consider AC-ROM \eqref{AC_ROM} and the partition $0 = t_{0} < t_{1} < \cdots < t_{N} = T$ used in section \ref{POD_sec}. Letting $\epsilon = \mathcal{O}(\dt)$ it then holds
		\begin{gather*}
		\norm{e^{N+1}_u}^2+\epsilon\norm{e^{N+1}_p}^2+ \frac{\nu\dt}{2}\norm{\Grad{e^{N+1}_u}}^2+\frac{\nu}{2}\norm[2,0]{\Grad{e_u}}^2
		\\
		+\sum_{n=0}^{N}\left(\norm{e^{n+1}_u-e^{n}_u}^2+\frac{\epsilon}{2}\norm{e^{n+1}_p-e^{n}_p}^2\right)
		\\
		\leq \norm{\eta^{N+1}}^2+\epsilon\norm{\kappa^{N+1}}^2+ \frac{\nu\dt}{2}\norm{\Grad{\eta^{N+1}}}^2+\frac{\nu}{2}\norm[2,0]{\Grad{\eta}}^2
		\\
		+\sum_{n=0}^{N}\left(\norm{\eta^{n+1}-\eta^{n}}^2+\frac{\epsilon}{2}\norm{\kappa^{n+1}-\kappa^{n}}^2\right)
		\\ +C\exp\left({\frac{\widetilde{C}T}{\nu^{3}}}\right)\bigg(\norm{\xi^{0}_R}^2+\epsilon\norm{\pi^{0}_M}^2+\dt\nu\norm{\Grad{\xi^{0}_R}}^2
		\\
		+\left(\frac{1}{\nu}+\alpha^{2}\dt^{-1}\right)\norm[2,0]{\Grad{\eta}}^2+\frac{1}{\nu}\norm[2,0]{\kappa}^2+\frac{\dt^2}{\nu}\norm[L^2(0,T;L^2(\Omega))]{u_{tt}}^2
		\\
		+\frac{\dt^2}{\nu}\norm[L^2(0,T;L^2(\Omega))]{\Grad{u_t}}^2 + \frac{\dt^{1/2}}{\nu^{3/2}}\left(\sum_{n=0}^{N} \| \nabla \eta^{n+1} \|^{4}\right)^{\frac{1}{2}}
		\\
		+\dt^3\norm[L^2(0,T;L^2(\Omega))]{p_t}^2+\dt\norm[\infty]{p_t}^2\bigg).
		\end{gather*}
		
	\end{theorem}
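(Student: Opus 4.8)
The plan is to derive an error equation by comparing the AC-ROM \eqref{AC_ROM} against a consistent backward-Euler time discretization of the weak NSE \eqref{wfwf} evaluated at $t^{n+1}$. First I would write the exact solution $(u^{n+1},p^{n+1})$ as satisfying the same AC form, tested against $\varphi\in X_R$ and $\psi\in Q_M$, while collecting the time-consistency residuals: the term $\tau^{n+1}$ gathering $u_t^{n+1}-(u^{n+1}-u^n)/\dt$ and the nonlinear lag $b^{\ast}(u^{n+1}-u^n,u^{n+1},\varphi)$, together with—crucially, since the true $u$ is divergence free—the pressure residual $\epsilon((p^{n+1}-p^n)/\dt,\psi)$ on the right of the discrete continuity equation. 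Subtracting \eqref{AC_ROM} then yields coupled error equations for $e_u^{n+1}$ and $e_p^{n+1}$.

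Next I would insert the $L^2$-projection splitting $e_u^{n+1}=\eta^{n+1}-\xi_R^{n+1}$, $e_p^{n+1}=\kappa^{n+1}-\pi_M^{n+1}$ and test the momentum equation with $\varphi=\xi_R^{n+1}\in X_R$ and the continuity equation with $\psi=\pi_M^{n+1}\in Q_M$. The defining orthogonality of $P_R$ and $\chi_M$ kills the discrete time-difference terms in $\eta$ and $\kappa$, and—the structural point of the AC coupling—adding the two equations cancels the mixed terms $\pm(\nabla\cdot\xi_R^{n+1},\pi_M^{n+1})$. After applying the polarization identity to the two time-difference terms and multiplying by $2\dt$, I obtain a discrete energy balance whose left side is $\|\xi_R^{n+1}\|^2-\|\xi_R^n\|^2+\|\xi_R^{n+1}-\xi_R^n\|^2+2\dt\nu\|\nabla\xi_R^{n+1}\|^2+\epsilon(\|\pi_M^{n+1}\|^2-\|\pi_M^n\|^2+\|\pi_M^{n+1}-\pi_M^n\|^2)$, and whose right side collects the viscous cross term $2\dt\nu(\nabla\eta^{n+1},\nabla\xi_R^{n+1})$, the pressure-projection term $-2\dt(\kappa^{n+1},\nabla\cdot\xi_R^{n+1})$, the two consistency residuals, the nonlinear difference, and the key continuity term $2\dt(\nabla\cdot\eta^{n+1},\pi_M^{n+1})$.

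I would then bound the right side term by term. Cauchy--Schwarz with $\|\nabla\cdot v\|\le\sqrt d\,\|\nabla v\|$ and Young's inequality handle the viscous cross term (hiding a fraction of $\dt\nu\|\nabla\xi_R^{n+1}\|^2$ and leaving a $\nu|||\nabla\eta|||_{2,0}^2$-type contribution that lands in the non-exponential group), the pressure-projection term (leaving $\nu^{-1}|||\kappa|||_{2,0}^2$), and the truncation term $-2\dt(\tau^{n+1},\xi_R^{n+1})$ through the dual norm, producing the $\dt^2\nu^{-1}\|u_{tt}\|^2$ and $\dt^2\nu^{-1}\|\nabla u_t\|^2$ data terms. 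The pressure residual $\epsilon((p^{n+1}-p^n)/\dt,\pi_M^{n+1})$, using $\epsilon=\mathcal O(\dt)$ and $p^{n+1}-p^n=\int p_t$, yields the $\dt^3\|p_t\|^2$ and $\dt\|p_t\|_\infty^2$ terms. For the nonlinearity I would add and subtract to split $b^{\ast}(u^n,u^{n+1},\xi_R^{n+1})-b^{\ast}(u_R^n,u_R^{n+1},\xi_R^{n+1})$ into pieces carried by $\eta^n,\xi_R^n,\eta^{n+1}$, use skew-symmetry to drop $b^{\ast}(\cdot,\xi_R^{n+1},\xi_R^{n+1})$, estimate via \eqref{In1}--\eqref{In2} and the inverse inequality \eqref{POD:inveq}, and apply Young's inequality; this is where the $\dt^{1/2}\nu^{-3/2}(\sum\|\nabla\eta^{n+1}\|^4)^{1/2}$ term, the residual $\nu^{-1}|||\nabla\eta|||_{2,0}^2$ contribution, and the $\nu^{-3}$ in the Gronwall exponent arise.

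The main obstacle—and the source of the $\alpha^2\dt^{-1}$ factor—is the continuity term $2\dt(\nabla\cdot\eta^{n+1},\pi_M^{n+1})$. Because the ROM pair does not satisfy the $LBB_h$ condition, $\nabla\cdot P_R u^{n+1}$ is not orthogonal to $Q_M$, so no Stokes projection removes it; instead I would invoke Lemma \ref{lemma:ortho_bd} to get $2\dt\,\alpha\|\nabla\cdot\eta^{n+1}\|\,\|\pi_M^{n+1}\|$ and apply Young's inequality with a parameter of order $\dt$: writing the bound as $\dt\,C\|\pi_M^{n+1}\|^2+\dt\,\alpha^2 C^{-1}\|\nabla\cdot\eta^{n+1}\|^2$ and choosing $C=\mathcal O(\dt)$, the first piece equals $\dt\,(C/\epsilon)\,\epsilon\|\pi_M^{n+1}\|^2$ with $C/\epsilon=\mathcal O(1)$ (here $\epsilon=\mathcal O(\dt)$ is essential), a genuine $\dt\cdot\mathcal O(1)\cdot\epsilon\|\pi_M^{n+1}\|^2$ absorbable by discrete Gronwall, while the second piece is $\mathcal O(1)\,\alpha^2\|\nabla\cdot\eta^{n+1}\|^2$, whose sum is $\alpha^2\dt^{-1}\sum_n\dt\|\nabla\cdot\eta^{n+1}\|^2\approx\alpha^2\dt^{-1}|||\nabla\cdot\eta|||_{2,0}^2\le d\,\alpha^2\dt^{-1}|||\nabla\eta|||_{2,0}^2$. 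Summing the balance from $n=0$ to $N$, telescoping, hiding the viscous and $\epsilon$-difference fractions, and applying the discrete Gronwall lemma produce the factor $C\exp(\widetilde CT/\nu^3)$ multiplying the initial data $\|\xi_R^0\|^2+\epsilon\|\pi_M^0\|^2+\dt\nu\|\nabla\xi_R^0\|^2$ and all the data terms. Finally, the triangle inequality $\|e_u^{N+1}\|\le\|\eta^{N+1}\|+\|\xi_R^{N+1}\|$ (and likewise for $e_p$ and $\nabla e_u$, together with the analogous splitting of the difference terms $\|e_u^{n+1}-e_u^n\|^2$ and $\|e_p^{n+1}-e_p^n\|^2$) converts the $\xi_R,\pi_M$ bounds into the stated estimate, the leading $\eta^{N+1},\kappa^{N+1}$ terms and their differences on the right emerging directly from this last step.
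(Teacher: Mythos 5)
Your proposal follows essentially the same route as the paper's proof: the same $L^2$-projection splitting $e_u=\eta-\xi_R$, $e_p=\kappa-\pi_M$, the same testing with $2\Delta t\,\xi_R^{n+1}$ and $2\Delta t\,\pi_M^{n+1}$ with cancellation of the pressure--velocity coupling terms, the same use of Lemma \ref{lemma:ortho_bd} together with $\epsilon=\mathcal{O}(\Delta t)$ to produce the $\alpha^2\Delta t^{-1}$ factor, the same splitting of the nonlinearity (with the key term controlled through the stability bound of Theorem \ref{thm=stab-AC}, yielding the $\Delta t^{1/2}\nu^{-3/2}$ term and the $\nu^{-3}$ Gronwall exponent), and the same Gronwall-plus-triangle-inequality conclusion. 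The only cosmetic difference is in the continuity term: you apply Young's inequality directly to $\|\pi_M^{n+1}\|$ with an $\mathcal{O}(\Delta t)$ weight and handle the resulting $\Delta t\,\epsilon\|\pi_M^{n+1}\|^2$ piece by (implicit) discrete Gronwall, whereas the paper splits $\pi_M^{n+1}=(\pi_M^{n+1}-\pi_M^n)+\pi_M^n$, hiding the difference in the left-hand side and Gronwalling the lagged term.
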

	\begin{proof}
		The weak solution of the NSE satisfies
		\begin{equation}
		\begin{aligned}
		\label{er-eq1-lc}
		\left(\frac{u^{n+1} - u^n}{\Delta t}, \varphi \right) + &\bstar{u^{n+1}}{u^{n+1}}{\varphi} + \nu(\nabla u^{n+1},\Grad{\varphi})\\
		&- (p^{n+1}, \nabla \cdot \varphi) = (f^{n+1},\varphi) + \tau_u(u^{n+1};\varphi)
		\end{aligned}
		\end{equation}
		\begin{equation}
		\label{er-eq2-lc}
		\varepsilon \left(\frac{p^{n+1} - p^{n}}{\Delta t} \right) + (\nabla \cdot u^{n+1},\psi) = \tau_p(p^{n+1};\psi),
		\end{equation}
		where
		\begin{equation}
		\begin{aligned}
		&\tau_u(u^{n+1};\varphi) = \left(\frac{u^{n+1} - u^n}{\Delta t} - u_t(t^{n+1}),\varphi \right) \\
		&\tau_p(p^{n+1};\psi) = \left(\frac{1}{\dt}\int_{t^n}^{t^{n+1}}p_t(t)dt,\psi \right).
		\end{aligned}
		\end{equation}
		Now subtracting \eqref{AC_ROM_velocity} from \eqref{er-eq1-lc} and  \eqref{AC_ROM_pressure} from \eqref{er-eq2-lc} we have
		\begin{equation}\label{er-eq3-lc}
		\begin{aligned}
		&\left(\frac{\xi^{n+1}_{R} -\xi_{R}^{n}}{\Delta t},\varphi \right) + \nu(\nabla \xi^{n+1}_{R},\nabla \varphi) - (\pi_{M}^{n+1},\nabla \cdot \varphi)
		\\
		& = \left(\frac{\eta^{n+1} -\eta^{n}}{\Delta t},\varphi \right) + \nu(\nabla \eta^{n+1},\nabla \varphi) - (\kappa^{n+1},\nabla \cdot \varphi)
		\\
		&  +\bstar{u^{n+1}}{u^{n+1}}{\varphi} -\bstar{u^{n}_R}{u^{n+1}_R}{\varphi} - \tau_u(u^{n+1};\varphi)
		\end{aligned}
		\end{equation}
		and
		\begin{equation}\label{er-eq4-lc}
		\begin{aligned}
		&\varepsilon \left( \frac{\pi^{n+1}_{M} - \pi_{M}^{n}}{\Delta t},\psi \right) + (\nabla \cdot \xi^{n+1}_{R},\psi )
		\\ 
		&= \varepsilon \left( \frac{\kappa^{n+1} - \kappa^{n}}{\Delta t} ,\psi \right) + (\nabla \cdot \eta^{n +1},\psi) - \tau_{p}(p^{n+1};\psi).
		\end{aligned}
		\end{equation}
		Setting $\varphi = 2 \Delta t \xi_{R}^{n+1}$ and $\psi = 2 \Delta t \pi_{M}^{n+1}$, we use the fact that $\left( \frac{\eta^{n+1} - \eta^{n}}{\Delta t} ,\xi_{R}^{n+1} \right) = 0$  and $\left( \frac{\kappa^{n+1} - \kappa^{n}}{\Delta t} ,\pi_{M}^{n+1} \right) = 0$ by the definition of the $L^2$ projection. Adding \eqref{er-eq3-lc} to \eqref{er-eq4-lc} and using the polarization identity yields
		\begin{gather}\label{eq:514}
		(\|\xi_{R}^{n+1} \|^{2} + \epsilon \| \pi_{M}^{n+1} \|^{2}) - (\|\xi_{R}^{n} \|^{2} + \epsilon \| \pi_{M}^{n} \|^{2}) + \|\xi_{R}^{n+1} - \xi_{R}^{n}  \|^{2} 
		\\
		\nonumber + \epsilon \|\pi_{M}^{n+1} - \pi_{M}^{n} \|^{2} + 2 \Delta t \nu \|\nabla \xi^{n+1}_R\|^{2} = 2 \Delta t \nu (\nabla \eta^{n+1},\nabla \xi^{n+1}_{R})- 2 \Delta t(\kappa^{n+1}, \nabla \cdot \xi_{R}^{n+1})
		\\
		\nonumber + 2 \Delta t (\nabla \cdot \eta^{n+1},\pi^{n+1}_{M}) + 2 \Delta t b^{*}(u^{n},u^{n+1}, \xi^{n+1}_{R}) - 2 \Delta t b^{*}(u_{R}^{n},u_{R}^{n+1}, \xi^{n+1}_{R})
		\\
		\nonumber - 2 \Delta t \tau_{u}(u^{n+1};\xi^{n+1}_R) - 2 \Delta t \tau_{p}(p^{n+1};\pi^{n+1}_M).
		\end{gather}
		By Poincar\'{e} and Young's inequality we bound the first two terms on the right hand side of \eqref{eq:514}
		\begin{equation}
		\begin{aligned}
		2 \Delta t \nu (\nabla \eta^{n+1},\nabla \xi^{n+1}_{R}) &\leq \frac{\Delta t \nu}{\delta_{1}} \|\nabla \eta^{n+1}\|^{2} + \delta_{1}\Delta t \nu \|\nabla \xi_{R}^{n+1}\|^{2} 
		\\
		- 2 \Delta t(\kappa^{n+1}, \nabla \cdot \xi_{R}^{n+1}) &\leq \frac{\Delta t}{\nu \delta_{2}}\|\kappa^{n+1}\|^{2} + \delta_{2} \Delta t \nu \|\nabla \xi_{R}^{n+1}\|^{2}.
		\end{aligned}
		\end{equation}
		For the third term on the right of \eqref{eq:514}, adding and subtracting $2 \Delta t (\nabla \cdot \eta^{n+1} ,\pi_{M}^{n})$, applying Young's inequality, and Lemma \ref{lemma:ortho_bd} yields
		\begin{gather*}
		2 \Delta t (\nabla \cdot \eta^{n+1} ,\pi_{M}^{n+1}) =  2 \Delta t \left((\nabla \cdot \eta^{n+1} ,\pi_{M}^{n+1} - \pi_{M}^{n}) + (\nabla \cdot \eta^{n+1} ,\pi_{M}^{n})\right)
		\\
		\leq 2 \dt \alpha \| \nabla \eta^{n+1}\| \|\pi_{M}^{n+1} - \pi_{M}^{n}\| + 2 \dt \alpha \|\nabla \cdot \eta^{n+1} \| \|\pi_{M}^{n} \|
		\\
		\leq \frac{\alpha^{2}\Delta t^{2}}{\delta_{3}\epsilon}\|\nabla \eta^{n+1}\|^{2} + \delta_{3}\epsilon \|\pi_{M}^{n+1} - \pi_{M}^{n}\|^{2} + \frac{\Delta t \alpha^{2} }{\epsilon \delta_{4}}\|\nabla \eta^{n+1}\|^{2} + \delta_{4} \epsilon \Delta t\|\pi_{M}^{n}\|^{2}
		\\
		= \frac{ \alpha^{2}(\delta_4 \Delta t^{2} + \delta_{3}\Delta t)}{\epsilon \delta_3 \delta_4}\|\nabla \eta^{n+1}\|^{2} + \delta_{3}\epsilon \|\pi_{M}^{n+1} - \pi_{M}^{n}\|^{2} + \delta_{4} \epsilon \Delta t\|\pi_{M}^{n}\|^{2}.
		\end{gather*}
		
		\noindent Next, for the nonlinear terms we add and subtract $\bstar{u_{R}^{n}}{u^{n+1}}{\xi^{n+1}_R}$ and \newline 
		$\bstar{u^{n}}{u^{n+1}}{\xi^{n+1}_R}$. This yields, by skew-symmetry,
		\begin{gather*}
		2\Delta t \bstar{u^{n+1}}{u^{n+1}}{\xi^{n+1}_R} - 2\Delta t \bstar{u_{R}^{n}}{u_{R}^{n+1}}{\xi^{n+1}_{R}}
		\\ 
		= 2\dt\bstar{u^{n+1}-u^{n}}{u^{n+1}}{\xi^{n+1}_R}+ 2\Delta t \bstar{u_{R}^{n}}{e_{u}^{n+1}}{\xi^{n+1}_{R}} + 2\Delta t \bstar{e_{u}^{n}}{u^{n+1}}{\xi^{n+1}_{R}}
		\\
		= 2\dt\bstar{u^{n+1}-u^{n}}{u^{n+1}}{\xi^{n+1}_R}- 2\Delta t \bstar{\xi_{R}^{n}}{u^{n+1}}{\xi^{n+1}_{R}} + 2\Delta t\bstar{\eta^{n}}{u^{n+1}}{\xi^{n+1}_{R}} \\
		+2\Delta t \bstar{u_{R}^{n}}{\eta^{n+1}}{\xi^{n+1}_{R}}.
		\end{gather*}
		The first three nonlinear terms are now bounded using the Sobolev Imbedding Theorem, Young's inequality, \eqref{In1}, and \eqref{In2}
		\begin{gather}
		2\dt\bstar{u^{n+1}-u^{n}}{u^{n+1}}{\xi^{n+1}_R}\leq \frac{C\dt^2}{\delta_{5}\nu}\norm{\Grad{u^{n+1}}}^{2}\norm[L^2(t^{n},t^{n+1};L^2(\Omega))]{\Grad{u_t}}^2\\
		+\delta_{5}\dt\nu\norm{\Grad{\xi^{n+1}_R}}^2\nonumber\\
		2\Delta t b^{*}(u_{R}^{n},\eta^{n+1}, \xi^{n+1}_{R}) \leq \frac{C \Delta t}{\delta_{6}\nu}\| \nabla u_{R}^{n} \| \| u_{R}^{n} \| \| \nabla \eta^{n+1} \|^{2} + \delta_{6} \Delta t \nu \|\nabla \xi^{n+1}_{R}\|^{2}
		\\
		2\Delta t b^{*}(\eta^{n},u^{n+1}, \xi^{n+1}_{R}) \leq \frac{C \Delta t}{\delta_{7}\nu}\| \nabla u^{n+1} \|^{2} \| \nabla \eta^{n} \|^{2} + \delta_{7} \Delta t \nu \|\nabla \xi^{n+1}_{R}\|^{2}
		\\
		- 2\Delta t b^{*}(\xi_{R}^{n},u^{n+1}, \xi^{n+1}_{R}) \leq \frac{C \Delta t}{\delta_{8}^{2} \delta_{9}\nu^{3}}\|\nabla u^{n+1} \|^{4}\|\xi^{n}_{R} \|^{2} + \delta_{8} \Delta t \nu \|\nabla \xi^{n+1}_{R} \|^{2} \\
		+ \delta_{9} \Delta t \nu \|\nabla \xi^{n}_{R} \|^{2} \nonumber. 
		\end{gather}
		Dealing with the consistency terms, by Taylor's Theorem and Young's inequality we have
		\begin{equation}
		\begin{aligned}
		-2 \Delta t \tau_u(u^{n+1};\xi^{n+1}_R) &\leq \|\frac{u^{n+1} - u^{n}}{\Delta t} - u_{t}(t^{n+1})\|\|\xi_{R}^{n+1}\|
		\\
		& \leq \frac{C \Delta t^{2}}{\nu \delta_{10}}\| u_{tt}\|^{2}_{L^{2}(t^{n},t^{n+1}; L^{2})} + \nu \Delta t \delta_{10} \|\Grad{\xi_{R}^{n+1}}\|^{2}
		\end{aligned}
		\end{equation}
		and, by adding and subtracting by $2\dt\tau_{p}(p^{n+1};\pi^{n}_M)$, we have
		\begin{gather}
		- 2 \Delta t \tau_{p}(p^{n+1};\pi^{n+1}_M)\leq \frac{C\epsilon\dt^2}{\delta_{11}}\norm[L^2(t^{n},t^{n+1};L^2(\Omega))]{p_t}^2+\frac{C\epsilon\dt}{\delta_{12}}\norm[\infty]{p_t}^2\\\nonumber
		+ \delta_{11}\epsilon\norm{\pi^{n+1}_M-\pi^{n}_M}^2+\delta_{12}\epsilon\dt\norm{\pi^{n}_M}^2.
		\end{gather}
		Letting $\delta_1=\delta_2=\delta_5=\delta_6=\delta_7=\delta_8=\delta_{10}=\frac{1}{14}$, $\delta_3=\delta_{11}=\frac{1}{4}$, $\delta_4=\delta_{12}=\frac{1}{2}$, $\delta_9 = 1$, and rearranging/combining terms we have
		\begin{gather*}
		(\|\xi_{R}^{n+1} \|^{2} + \epsilon \| \pi_{M}^{n+1} \|^{2}) - (\|\xi_{R}^{n} \|^{2} + \epsilon \| \pi_{M}^{n} \|^{2}) + \|\xi_{R}^{n+1} - \xi_{R}^{n}  \|^{2} 
		\\
		\nonumber + \frac{\epsilon}{2} \|\pi_{M}^{n+1} - \pi_{M}^{n} \|^{2} + \frac{\nu \Delta t}{2} \|\nabla \xi^{n+1}_R\|^{2} + \frac{\nu \Delta t}{2} (\|\nabla \xi^{n+1}_R\|^{2} - \|\nabla \xi^{n}_{R} \|^{2}) \leq  \epsilon\dt\norm{\pi^{n}_M}^2 
		\\
		+ \dt \nu \|\nabla \eta^{n+1}\|^{2}
		+ \frac{\dt\alpha^{2}}{\epsilon}\|\nabla \eta^{n+1}\|^{2} + \frac{C\dt^2}{\nu}\norm{\Grad{u^{n+1}}}\norm[L^2(t^{n},t^{n+1};L^2(\Omega))]{\Grad{u_t}}^2 \\
		+\frac{\dt}{\nu}\|\kappa^{n+1}\|^{2}
		+ \frac{\dt^{2}}{\epsilon}\|\nabla \eta^{n+1}\|^{2} + \frac{C \Delta t}{\nu}\| u_{R}^{n} \|\| \nabla u_{R}^{n} \| \| \nabla \eta^{n+1} \|^{2} 
		\\ 
		+ \frac{C \Delta t}{\nu}\| \nabla u^{n+1} \|^{2} \| \nabla \eta^{n} \|^{2} 
		+ \frac{C \Delta t}{\nu^{3}}\|\nabla u^{n+1} \|^{4}\|\xi^{n}_{R} \|^{2} +
		\frac{C \Delta t^{2}}{\nu}\| u_{tt}\|^{2}_{L^{2}(t^{n},t^{n+1}; L^{2})}  
		\\
		+{C\epsilon\dt^2}\norm[L^2(t^{n},t^{n+1};L^2(\Omega))]{p_t}^2  + {C\epsilon\dt}\norm[\infty]{p_t}^2.
		\end{gather*}
		We note by Theorem \ref{thm=stab-AC} and the Cauchy Schwartz inequality that it follows
		\begin{equation}\label{nonlinear-stab-bound}
		\begin{aligned}
		\dt&\sum_{n=0}^{N}\| u_{R}^{n} \|\| \nabla u_{R}^{n} \| \| \nabla \eta^{n+1} \|^{2} \leq \max_{n=0,\ldots,N}\|u_{R}^{n}\|\dt\sum_{n=0}^{N}\| \nabla u_{R}^{n} \|\| \nabla \eta^{n+1} \|^{2}
		\\
		& \leq \max_{n=0,\ldots,N}\|u_{R}^{n}\|\left(\dt\sum_{n=0}^{N} \| \nabla u_{R}^{n} \|^{2}\right)^{\frac{1}{2}}\left(\dt\sum_{n=0}^{N} \| \nabla \eta^{n+1} \|^{4}\right)^{\frac{1}{2}}
		\\
		&\leq \frac{C_{stab}\dt^{1/2}}{\nu^{1/2}}\left(\dt\sum_{n=0}^{N} \| \nabla \eta^{n+1} \|^{4}\right)^{\frac{1}{2}}. 
		\end{aligned}
		\end{equation}
		By Theorem \ref{thm=stab-AC}, Assumption \ref{assumption:reg}, \eqref{nonlinear-stab-bound}, the fact that $\epsilon = \mathcal{O}(\dt)$, combining all inequalities, taking a maximum $\widetilde{C}$ over all constants, and summing from $n=0$ to $N$ yields
		\begin{gather*}
		\norm{\xi^{N+1}_R}^2+ \epsilon\norm{\pi^{N+1}_M}^2+\frac{\dt\nu}{2}\norm{\Grad{\xi^{N+1}_R}}^2+\frac{\nu}{2}\norm[2,0]{\Grad{\xi_R}}^2
		\\
		+\sum_{n=0}^{N}\left(\norm{\xi^{n+1}_R-\xi^{n}_R}^2+\frac{\epsilon}{2}\norm{\pi^{n+1}_M-\pi^{n}_M}^2\right)\leq \norm{\xi^{0}_R}^2+\epsilon\norm{\pi^{0}_M}^2
		\\
		+\dt\nu\norm{\Grad{\xi^{0}_R}}^2+\frac{\widetilde{C}\dt}{\nu^{3}}\sum_{n=0}^{N}\left(\norm{\xi^{n}_R}^2+\epsilon\norm{\pi^{n}_M}^2\right)
		\\
		+\widetilde{C}\bigg[\left(\frac{1}{\nu}+\alpha^{2}\dt^{-1}\right)\norm[2,0]{\Grad{\eta}}^2+\frac{1}{\nu}\norm[2,0]{\kappa}^2+
		\\
		\frac{\dt^{1/2}}{\nu^{3/2}}\left(\sum_{n=0}^{N} \| \nabla \eta^{n+1} \|^{4}\right)^{\frac{1}{2}}+\frac{\dt^2}{\nu}\norm[L^2(0,T;L^2(\Omega))]{u_{tt}}^2
		\\
		+\frac{\dt^2}{\nu}\norm[L^2(0,T;L^2(\Omega))]{\Grad{u_t}}^2+\dt^3\norm[L^2(0,T;L^2(\Omega))]{p_t}^2+\dt\norm[\infty]{p_t}^2\bigg].
		\end{gather*}
		Therefore, by a discrete Gr\"{o}nwall inequality,
		\begin{gather*}
		\norm{\xi^{N+1}_R}^2+\epsilon\norm{\pi^{N+1}_M}^2+\frac{\dt\nu}{2}\norm{\Grad{\xi^{N+1}_R}}^2+\frac{\nu}{2}\norm[2,0]{\Grad{\xi_R}}^2\\
		+\sum_{n=0}^{N}\left(\norm{\xi^{n+1}_R-\xi^{n}_R}^2+\frac{\epsilon}{2}\norm{\pi^{n+1}_M-\pi^{n}_M}^2\right)\\
		\leq C\exp\left({\frac{\widetilde{C}T}{\nu^{3}}}\right)\bigg(\norm{\xi^{0}_R}^2+\epsilon\norm{\pi^{0}_M}^2+\dt\nu\norm{\Grad{\xi^{0}_R}}^2\\
		+\left(\frac{1}{\nu}+\alpha^{2}\dt^{-1}\right)\norm[2,0]{\Grad{\eta}}^2+\frac{1}{\nu}\norm[2,0]{\kappa}^2+\frac{\dt^2}{\nu}\norm[L^2(0,T;L^2(\Omega))]{u_{tt}}^2\\
		+\frac{\dt^2}{\nu}\norm[L^2(0,T;L^2(\Omega))]{\Grad{u_t}}^2 + \frac{\dt^{1/2}}{\nu^{3/2}}\left(\sum_{n=0}^{N} \| \nabla \eta^{n+1} \|^{4}\right)^{\frac{1}{2}}
		\\
		+\dt^3\norm[L^2(0,T;L^2(\Omega))]{p_t}^2+\dt\norm[\infty]{p_t}^2\bigg).
		\end{gather*}
		By the triangle inequality we have $\|e_{u}^{n+1}\|^{2} \leq 2 (\|\eta^{n+1}\|^{2} + \| \xi_{R}^{n+1}\|^{2})$, as well as \newline $\|e_{p}^{n+1}\|^{2} \leq 2 (\|\kappa^{n+1}\|^{2} + \| \pi_{M}^{n+1}\|^{2})$. Applying this and and taking a maximum among constants the result follows.
	\end{proof}

	We see in Theorem \ref{theorem:1} that whether or not $\dt^{-1}$ order reduction occurs depends upon the constant $\alpha$. If $\alpha^{2} << \dt$ the order reduction will be alleviated. On the other hand for $\alpha^{2} >> \dt$ there will be no improvement in the convergence. 
	
	In \cite{BN85,BN84} it was determined for a $P^{1}-P^{0}$ finite element pair that $\alpha = \sqrt{\frac{3}{2}}h$. To our knowledge no similar calculations have been done for other finite element pairs such as Taylor-Hood.  It is unclear what impact the choice of offline basis functions has on the principal angle for the POD spaces. Additionally the POD basis functions will be problem dependent. It is possible that certain problem settings yield large principal angles independent of the choice of finite element basis functions used in the offline stage.

	%
	
	\section{Numerical Experiments}\label{numex}
	In this section, we perform a numerical investigation of the new AC-ROM algorithm \eqref{AC_ROM}.  First, we show that the AC-ROM algorithm yields accurate velocity and pressure approximations without enforcing the LBB condition or requiring weakly divergence free snapshots.  Then, we illustrate numerically the theoretical scalings proved in Theorem \ref{theorem:1}. In particular, we show that the AC-ROM algorithm yields first order scalings with respect to the time step, $\Delta t$. All computations are done using the FEniCS software suite \cite{LNW12} and all meshes generated via the built in meshing package \textbf{mshr}.
	
	\subsection{Problem Setting}\label{sec:numerical-experiments}
	For the numerical experiments we consider the two-dimensional flow between offset cylinders used in \cite{GJS17,JL14}. The domain is a disk with a smaller off-center disc inside. Let $r_{1}=1$, $r_{2}=0.1$, $c_{1}=1/2$, and $c_{2}=0$; then, the domain is given by
	\[
	\Omega=\{(x,y):x^{2}+y^{2}\leq r_{1}^{2} \text{ and } (x-c_{1})^{2}%
	+(y-c_{2})^{2}\geq r_{2}^{2}\}.
	\]

	\begin{figure}[!ht]
		\centering
		\begin{subfigure}{0.28\linewidth}
			\centering
			\includegraphics[width = 1\linewidth]{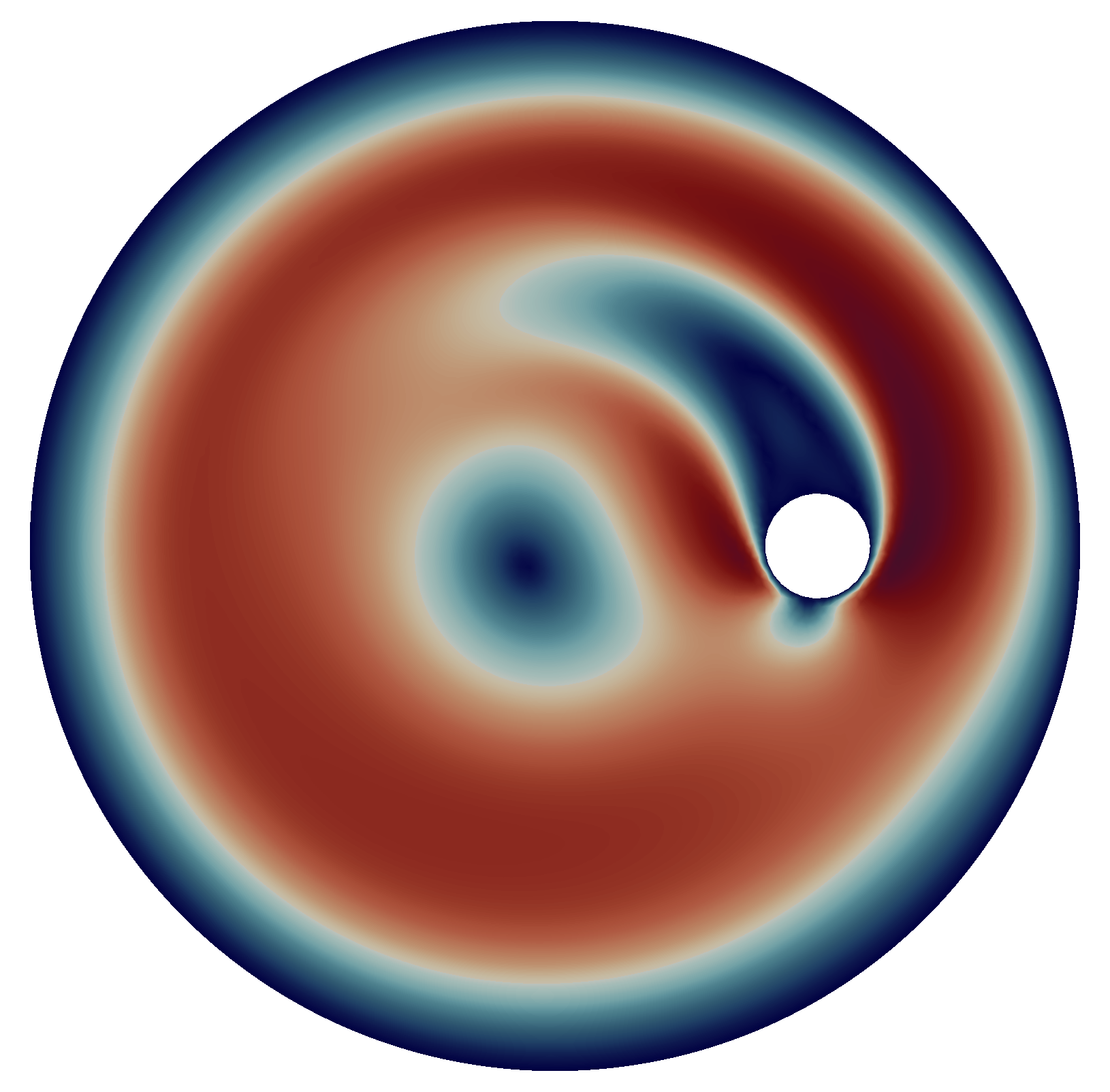}
		\end{subfigure}
		\begin{subfigure}{0.28\linewidth}
			\centering
			\includegraphics[width = 1\linewidth]{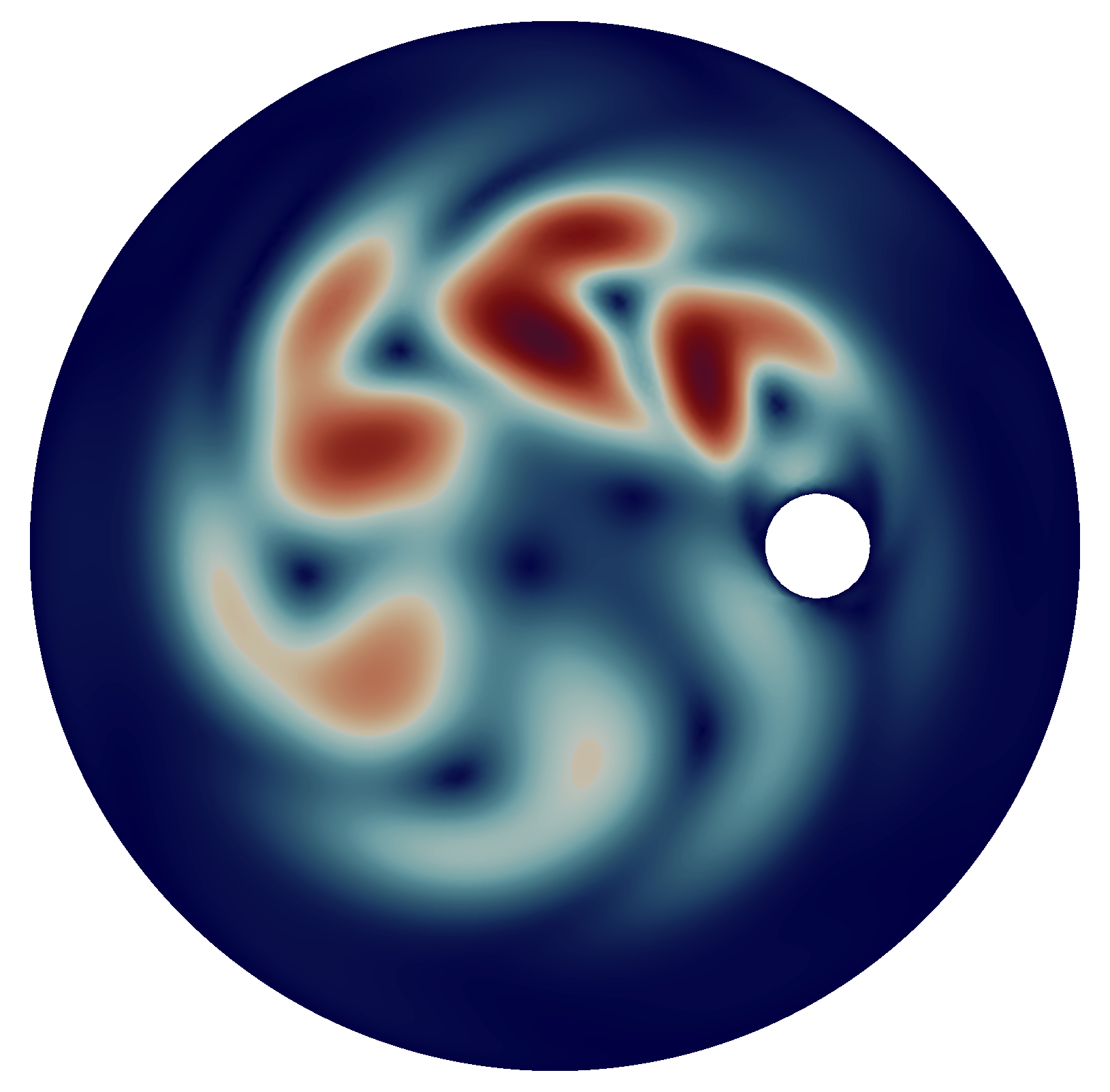}
		\end{subfigure}
		\begin{subfigure}{0.28\linewidth}
			\centering
			\includegraphics[width = 1\linewidth]{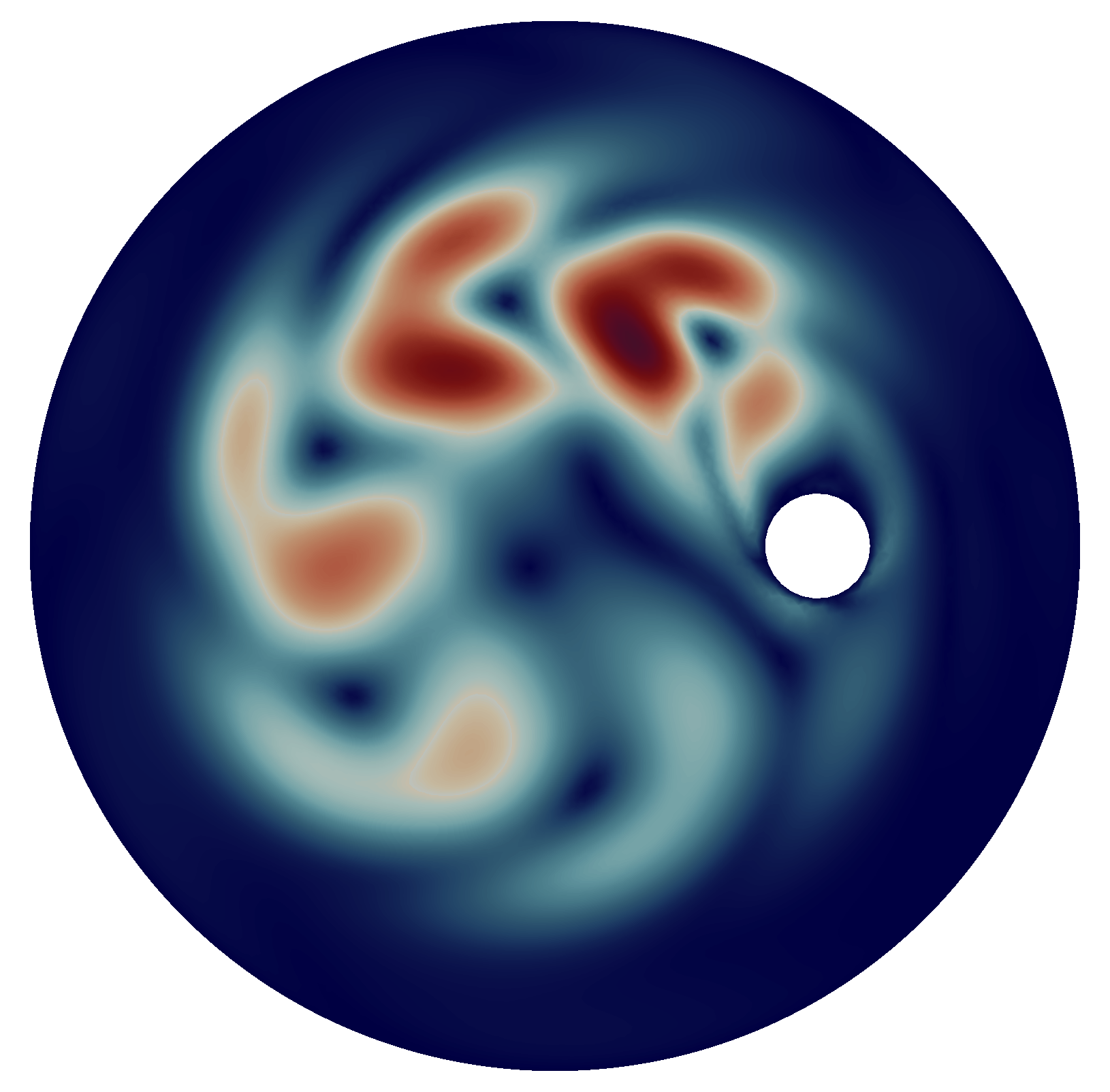}
		\end{subfigure}
		\begin{subfigure}{0.28\linewidth}
			\centering
			\includegraphics[width = 1\linewidth]{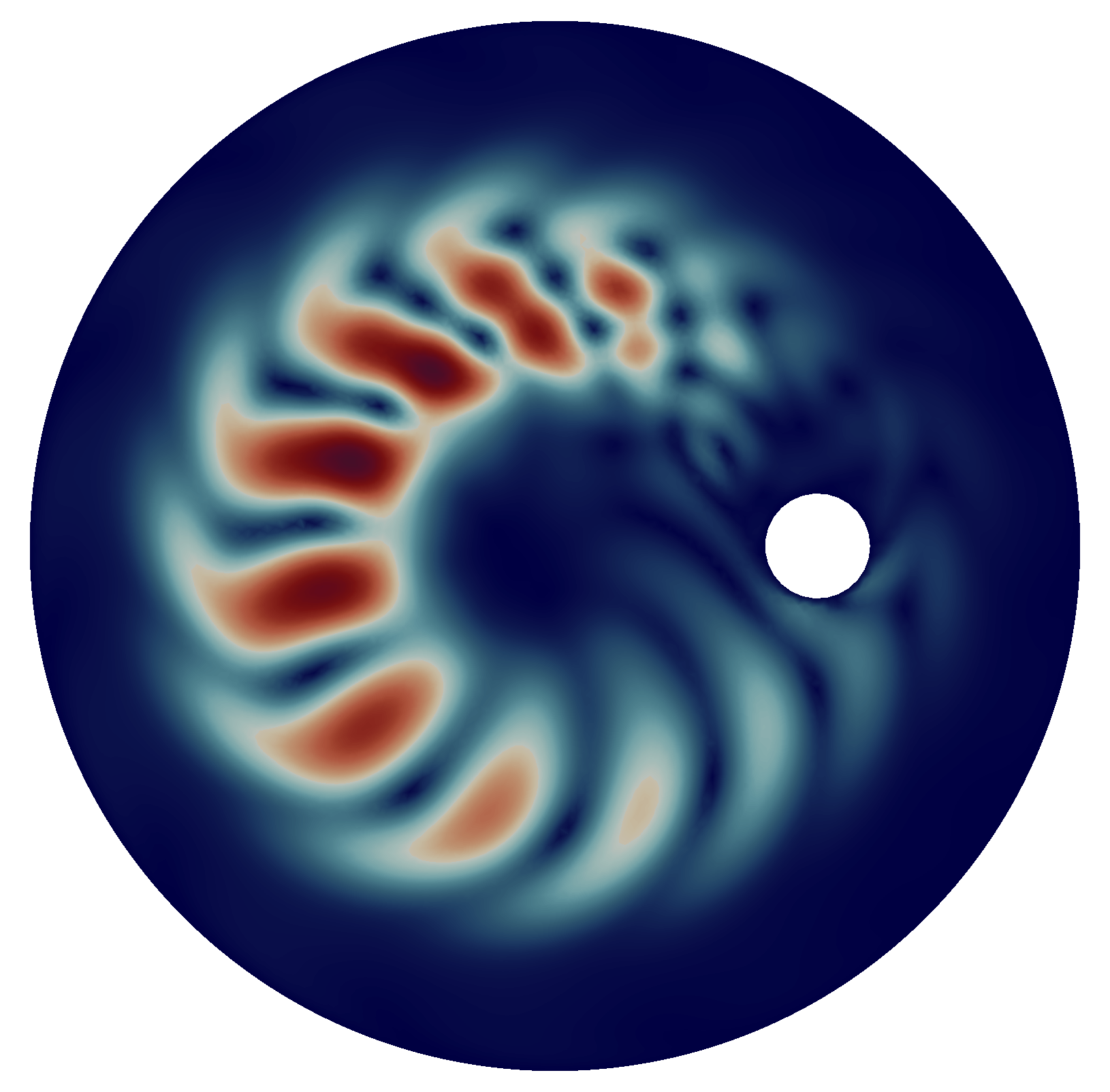}
		\end{subfigure}
		\begin{subfigure}{0.28\linewidth}
			\centering
			\includegraphics[width = 1\linewidth]{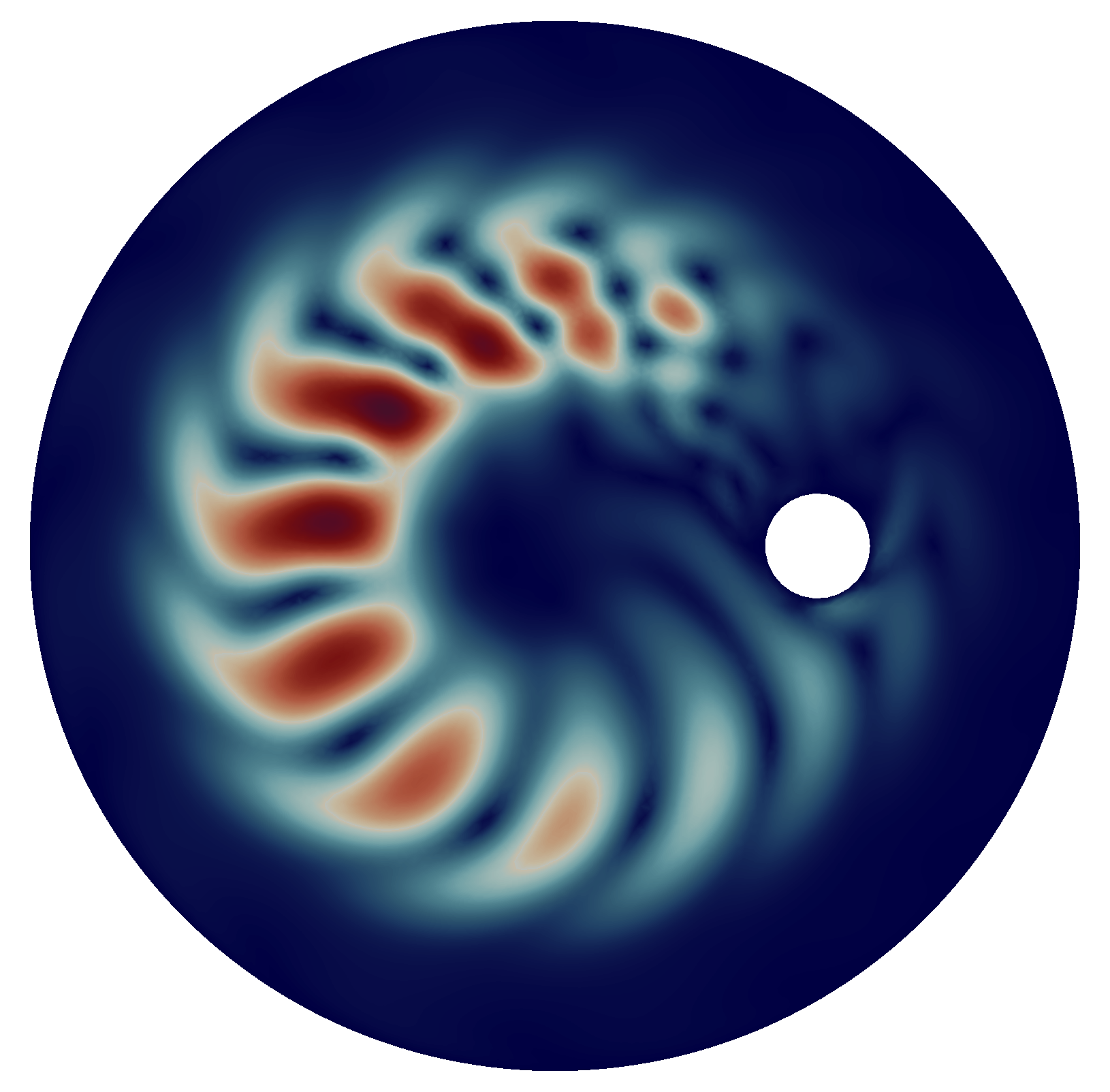}
		\end{subfigure}
		\begin{subfigure}{0.28\linewidth}
			\centering
			\includegraphics[width = 1\linewidth]{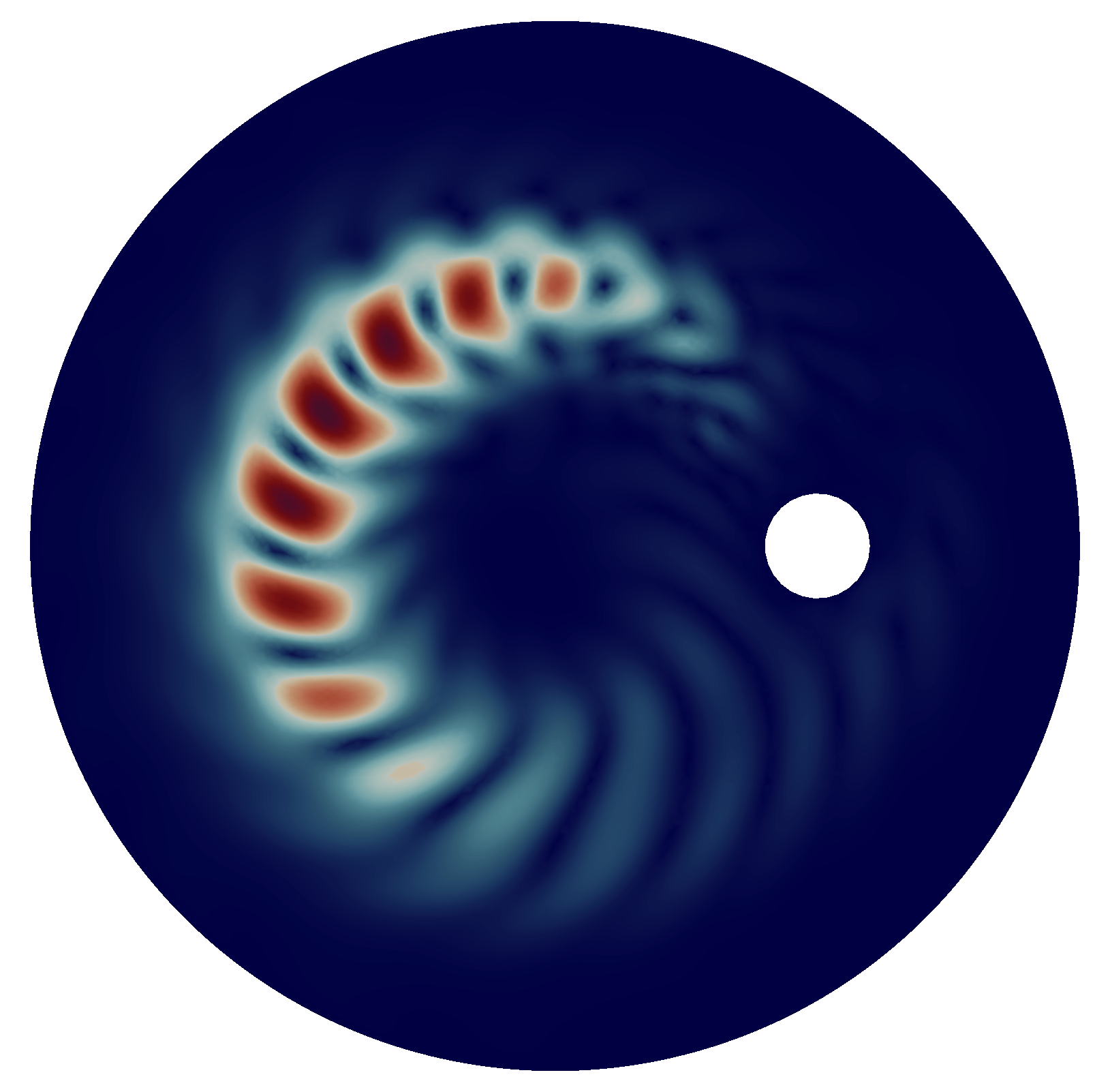}
		\end{subfigure}
		
		\caption{$\|u_R(x)\|$ with $R$ from 1 (top left) to 6 (bottom right).}
		\label{fig:v-basis}
	\end{figure}
	
	\begin{figure}[!ht]
		\centering
		\begin{subfigure}{0.28\linewidth}
			\centering
			\includegraphics[width = 1\linewidth]{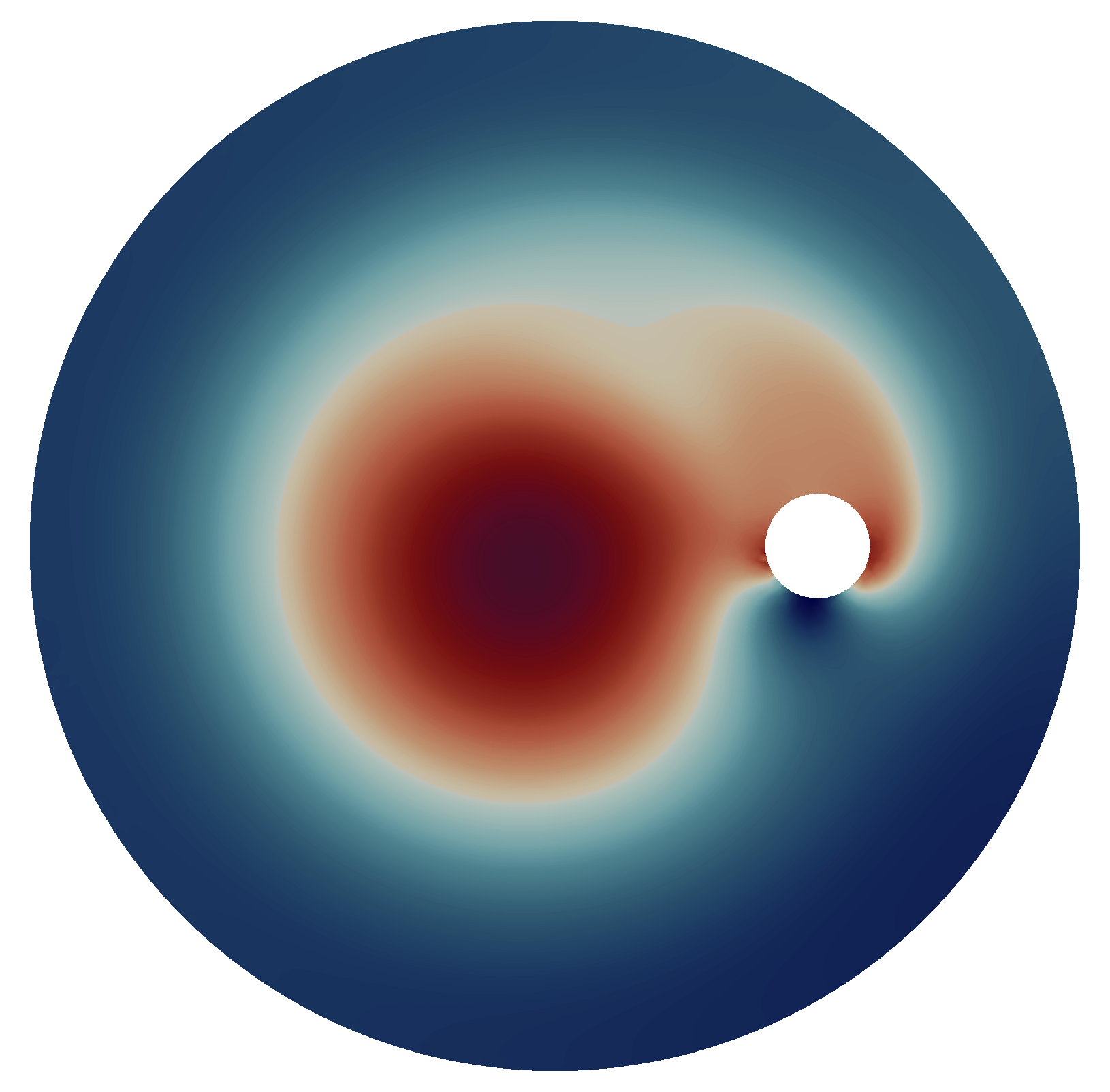}
		\end{subfigure}
		\begin{subfigure}{0.28\linewidth}
			\centering
			\includegraphics[width = 1\linewidth]{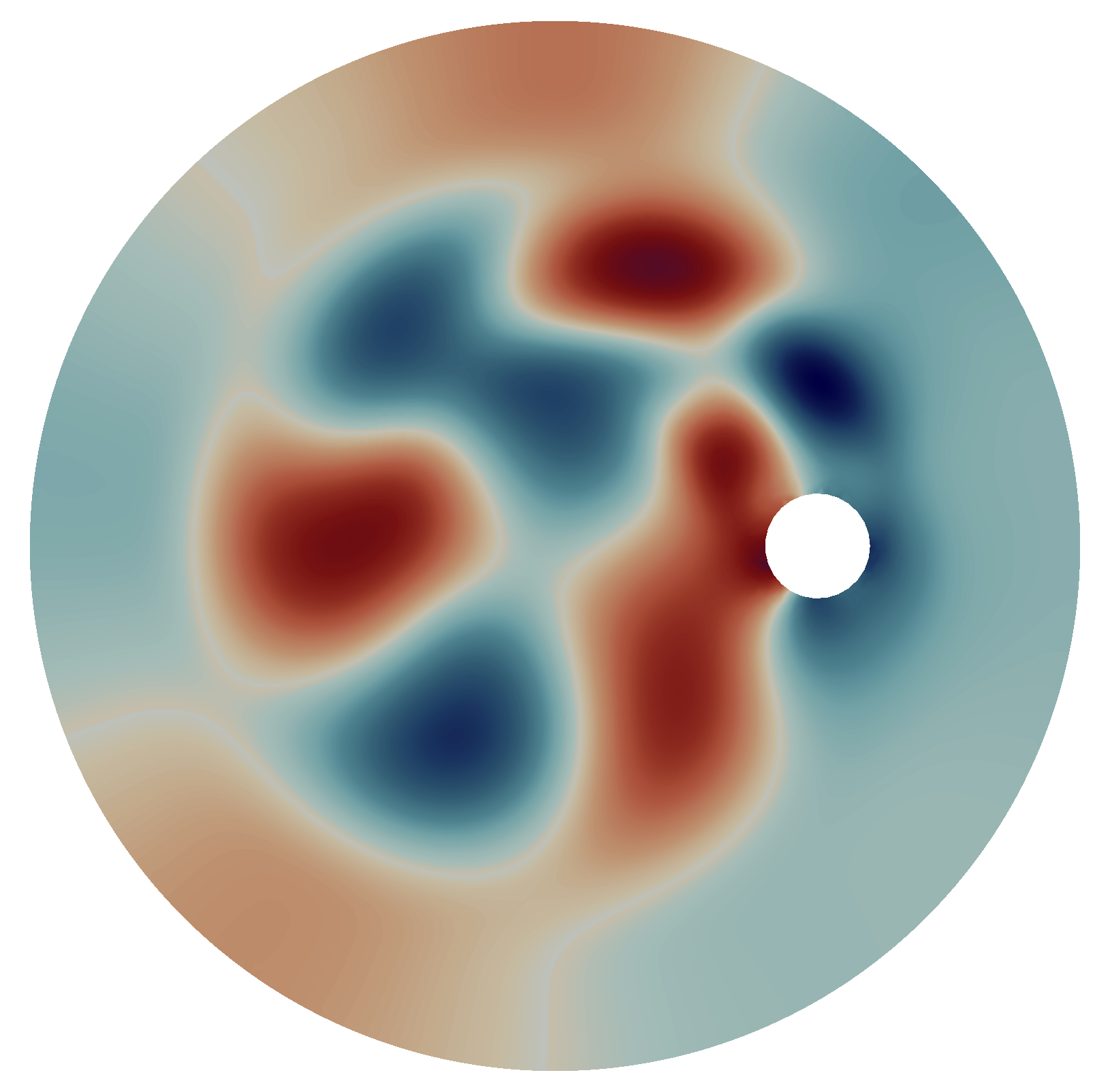}
		\end{subfigure}
		\begin{subfigure}{0.28\linewidth}
			\centering
			\includegraphics[width = 1\linewidth]{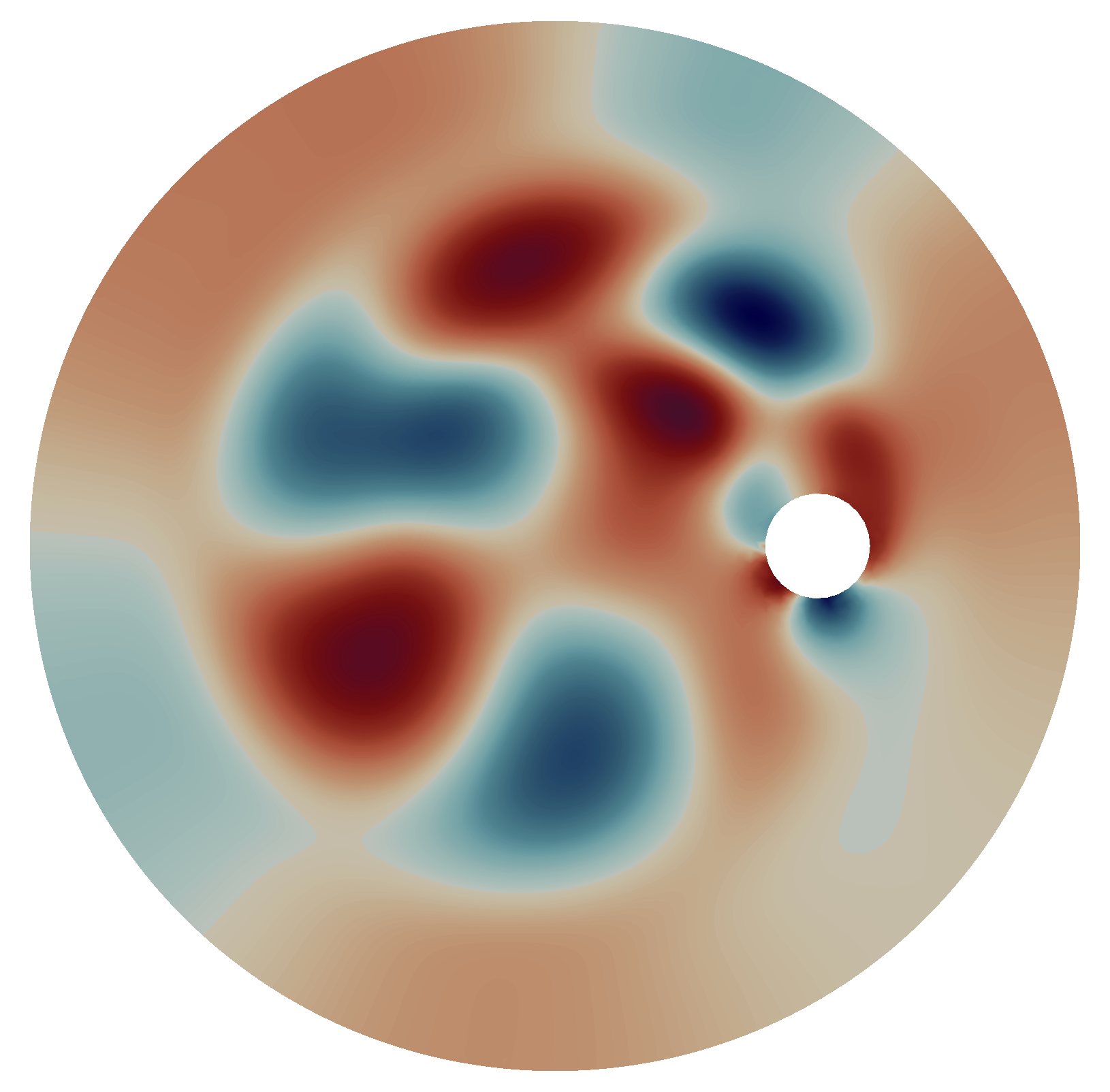}
		\end{subfigure}
		\begin{subfigure}{0.28\linewidth}
			\centering
			\includegraphics[width = 1\linewidth]{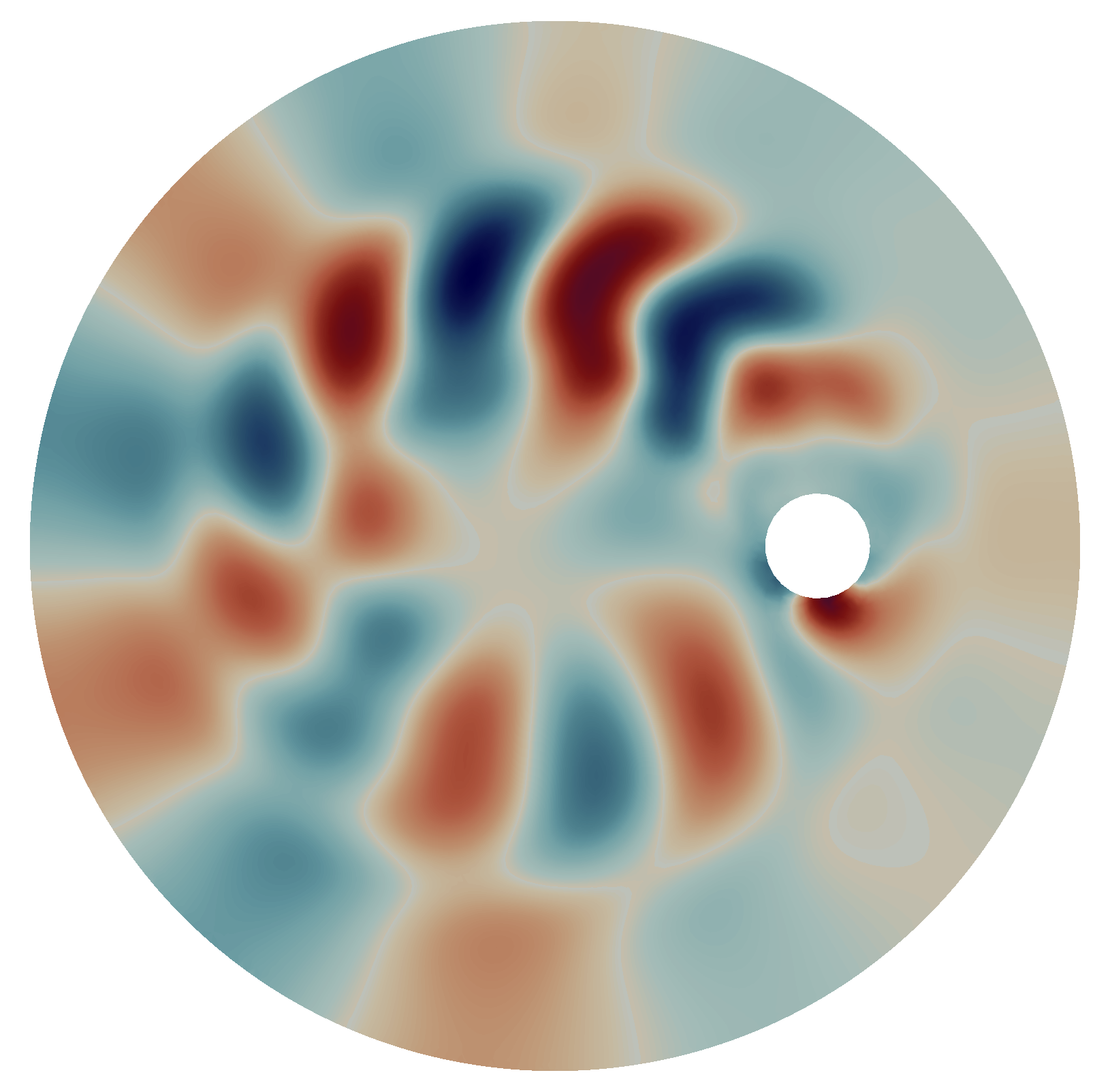}
		\end{subfigure}
		\begin{subfigure}{0.28\linewidth}
			\centering
			\includegraphics[width = 1\linewidth]{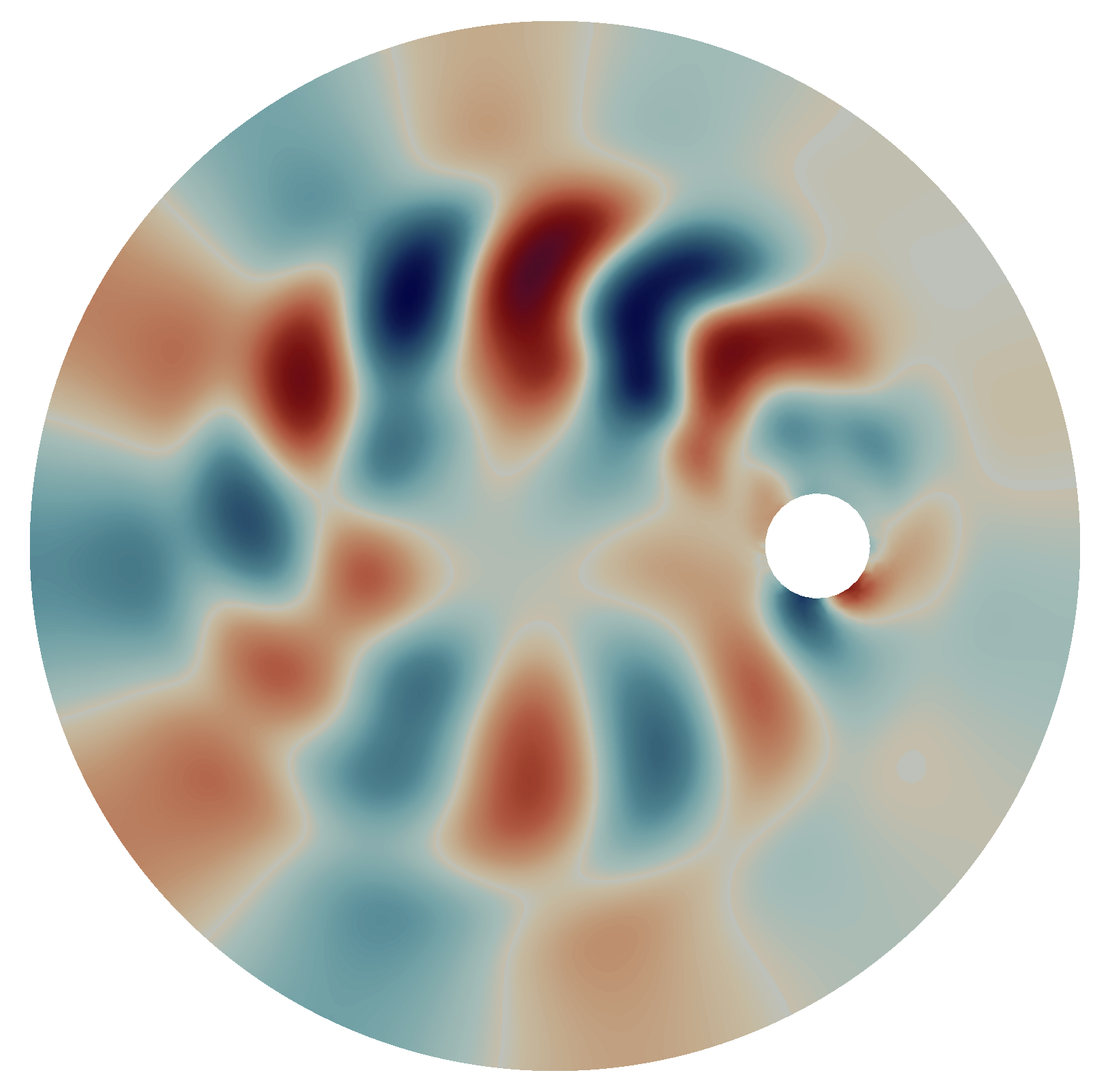}
		\end{subfigure}
		\begin{subfigure}{0.28\linewidth}
			\centering
			\includegraphics[width = 1\linewidth]{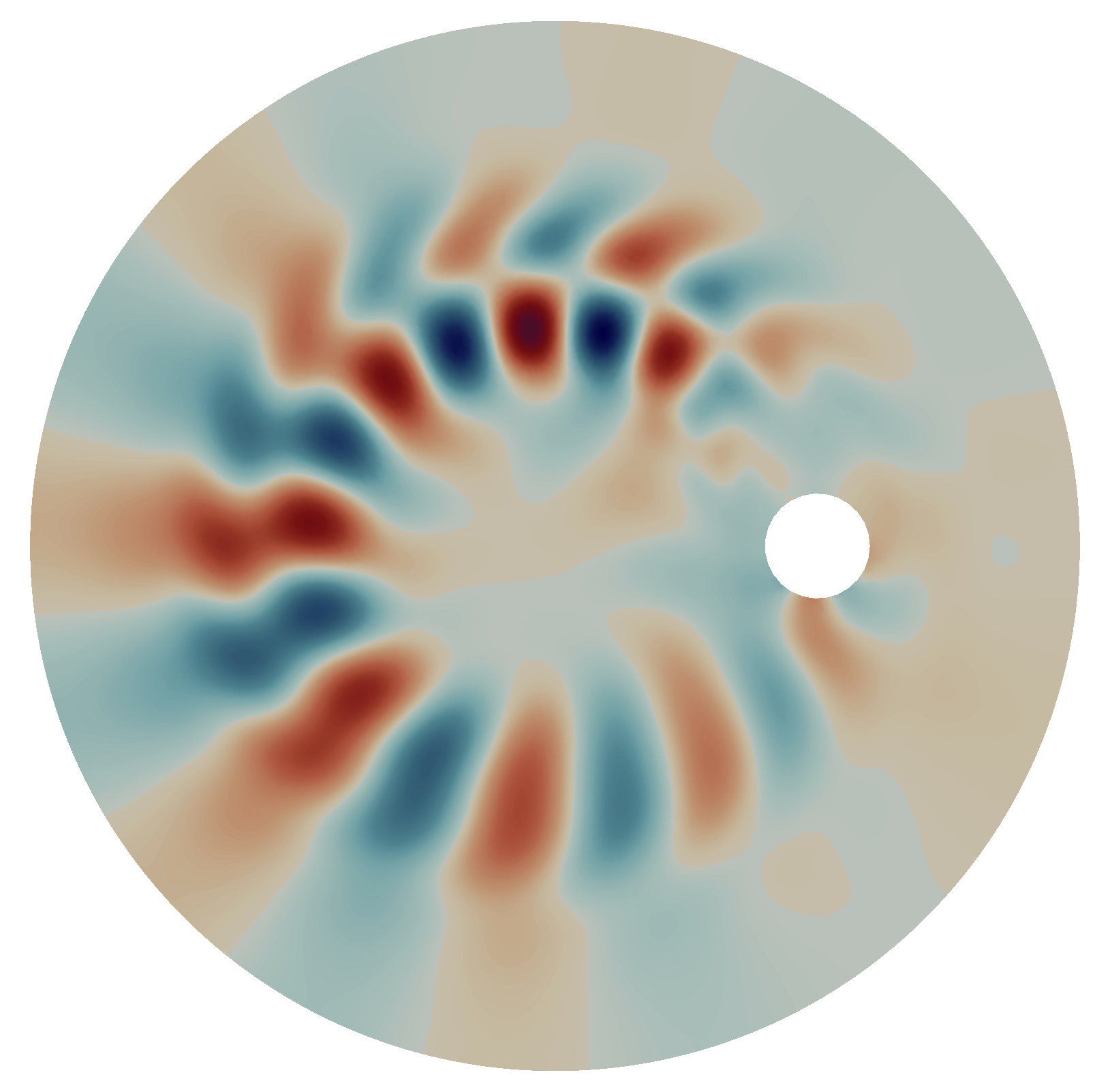}
		\end{subfigure}
		
		\caption{$p_R(x)$ with $R$ from 1 (top left) to 6 (bottom right).}
		\label{fig:p-basis}
	\end{figure}
	
	\begin{figure}[!ht]
		\centering
		\begin{subfigure}{0.28\linewidth}
			\centering
			\includegraphics[width = 1\linewidth]{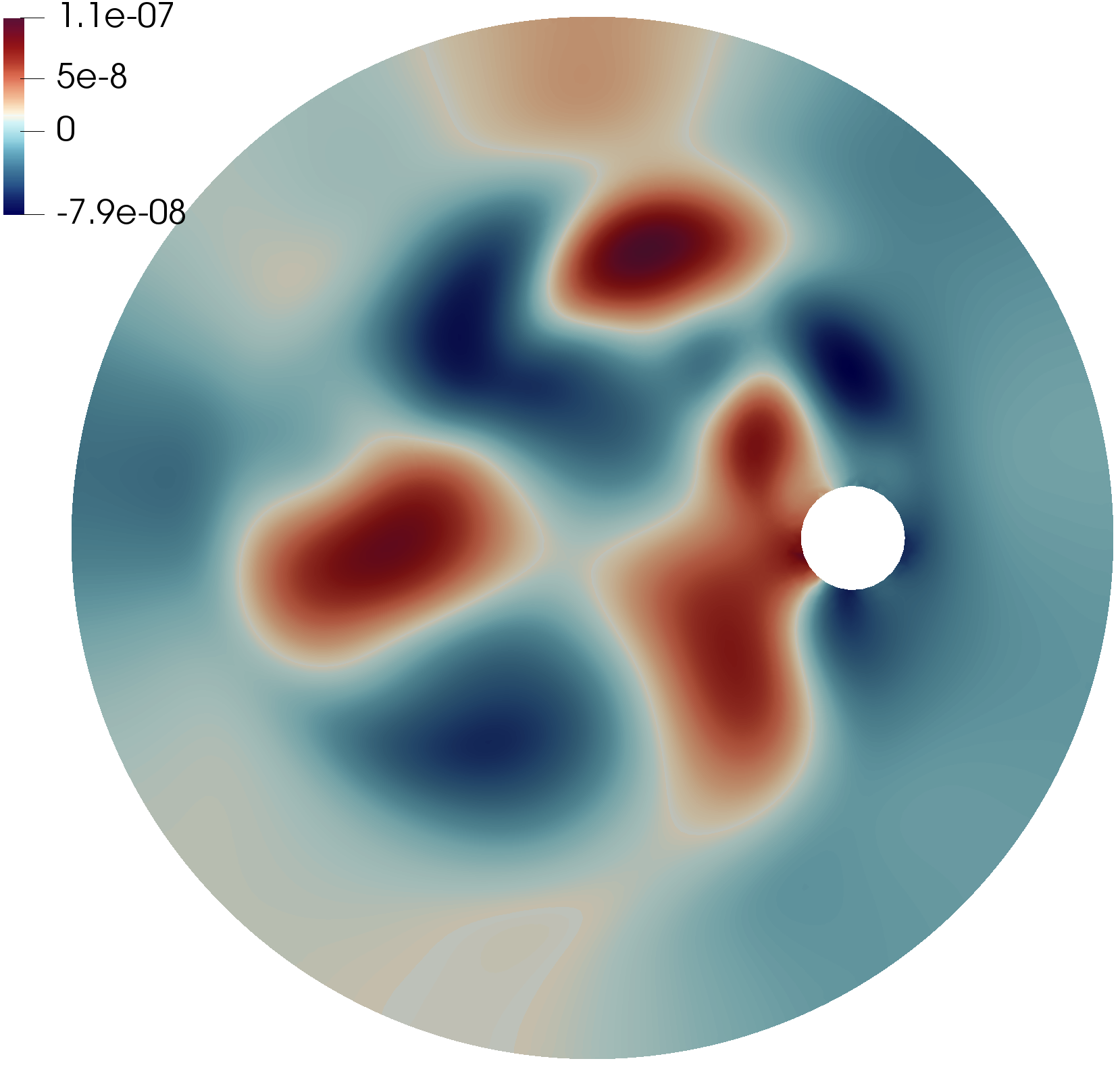}
		\end{subfigure}
		\begin{subfigure}{0.28\linewidth}
			\centering
			\includegraphics[width = 1\linewidth]{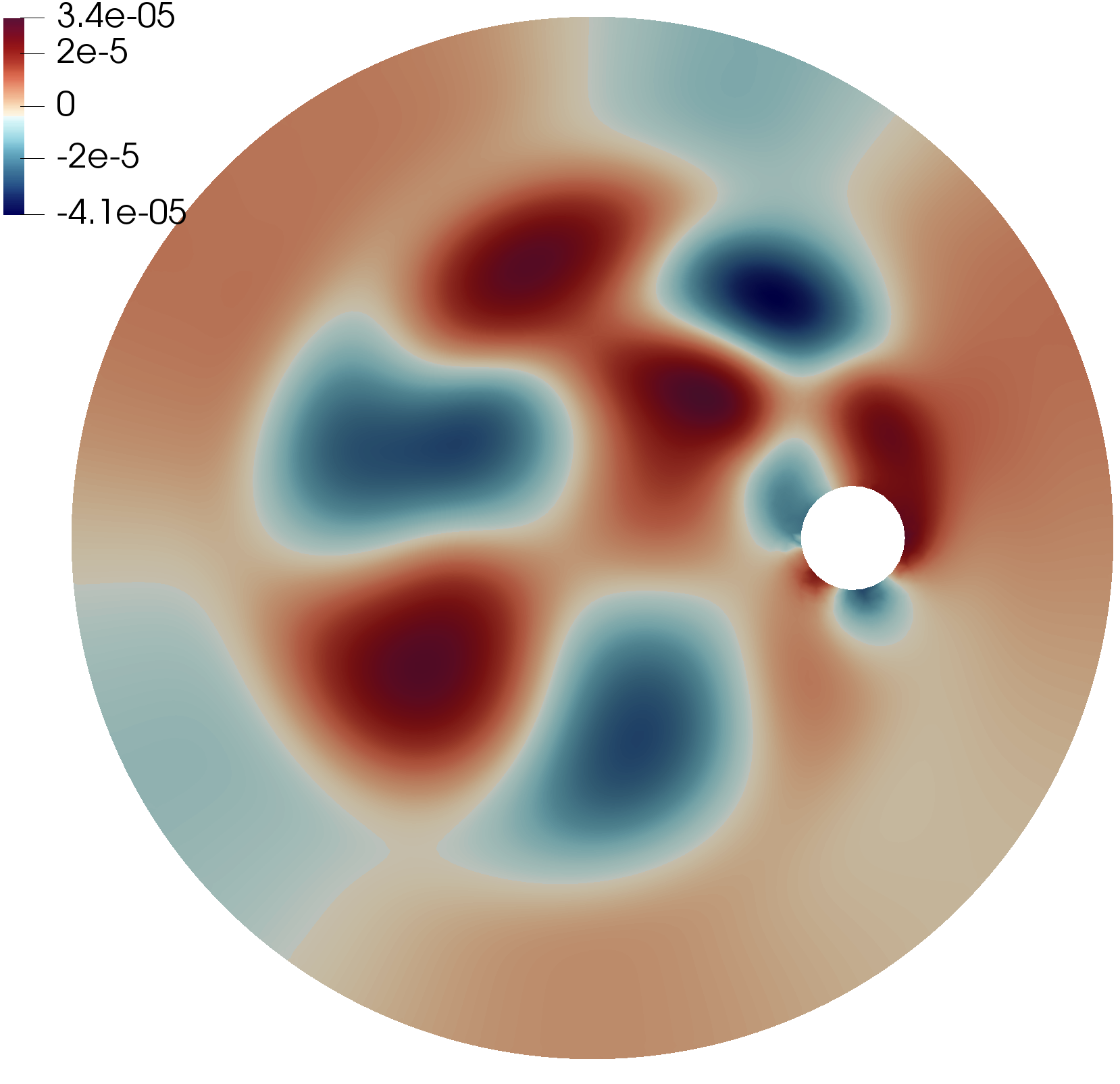}
		\end{subfigure}
		\begin{subfigure}{0.28\linewidth}
			\centering
			\includegraphics[width = 1\linewidth]{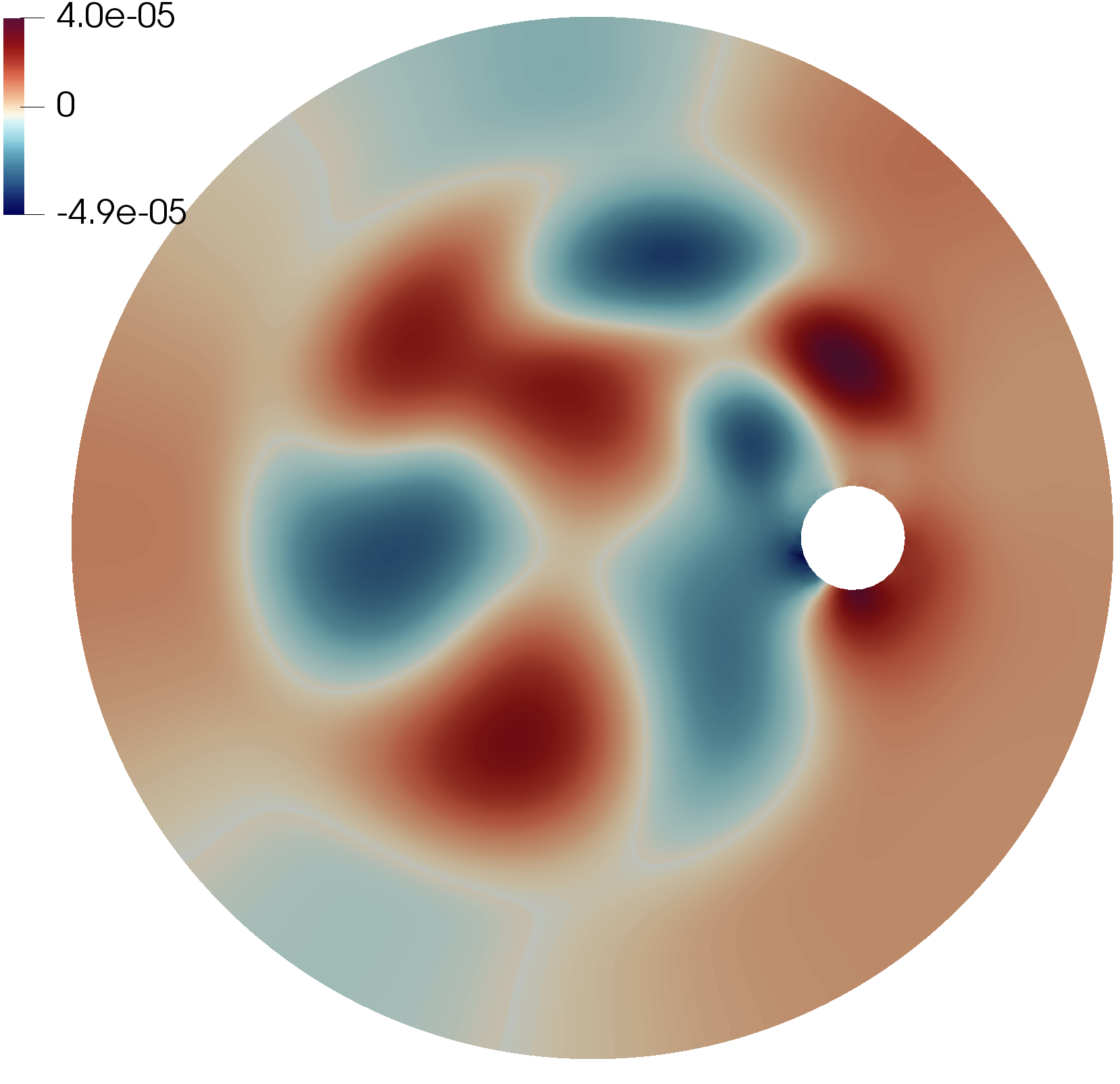}
		\end{subfigure}
		\begin{subfigure}{0.28\linewidth}
			\centering
			\includegraphics[width = 1\linewidth]{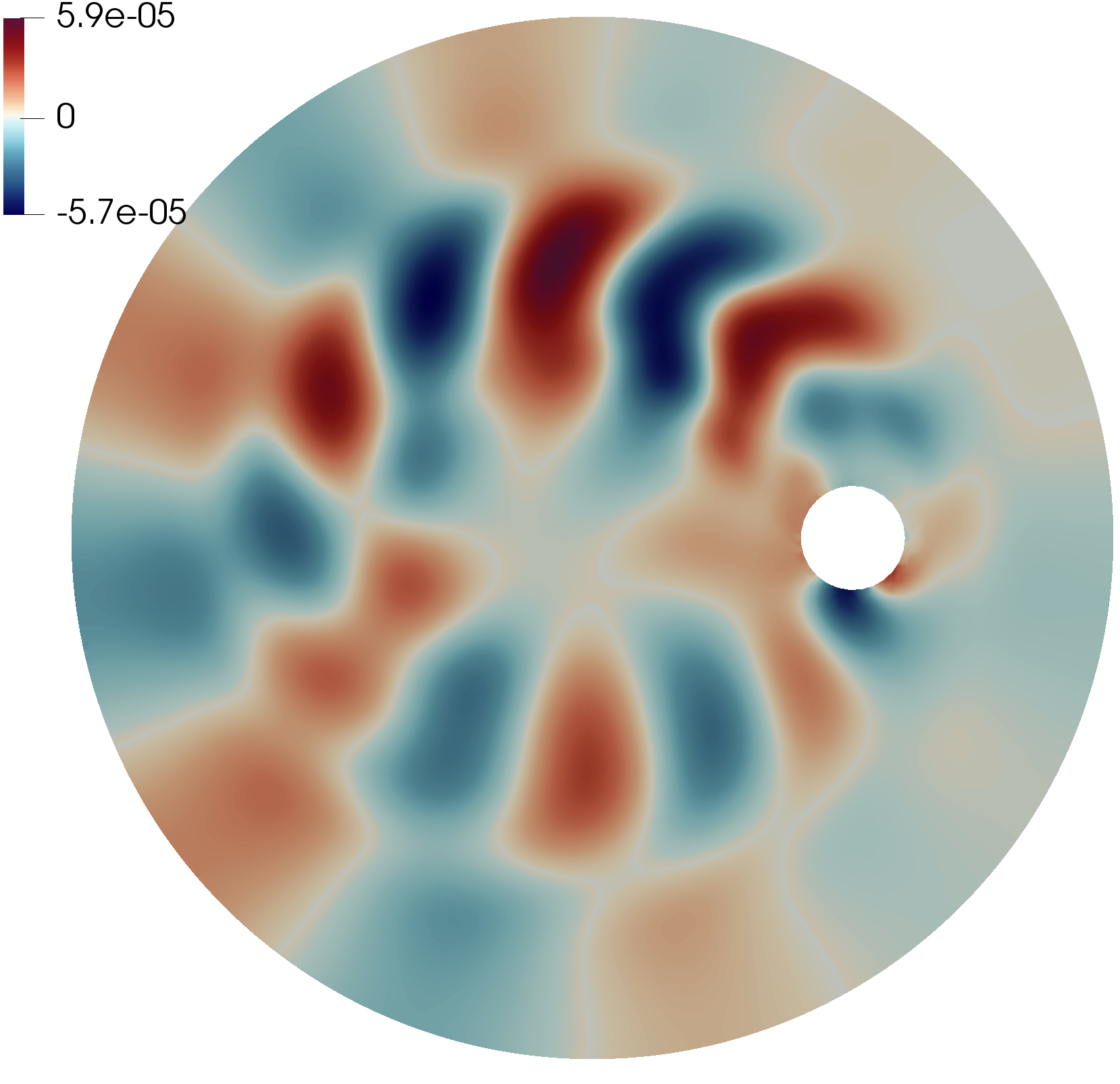}
		\end{subfigure}
		\begin{subfigure}{0.28\linewidth}
			\centering
			\includegraphics[width = 1\linewidth]{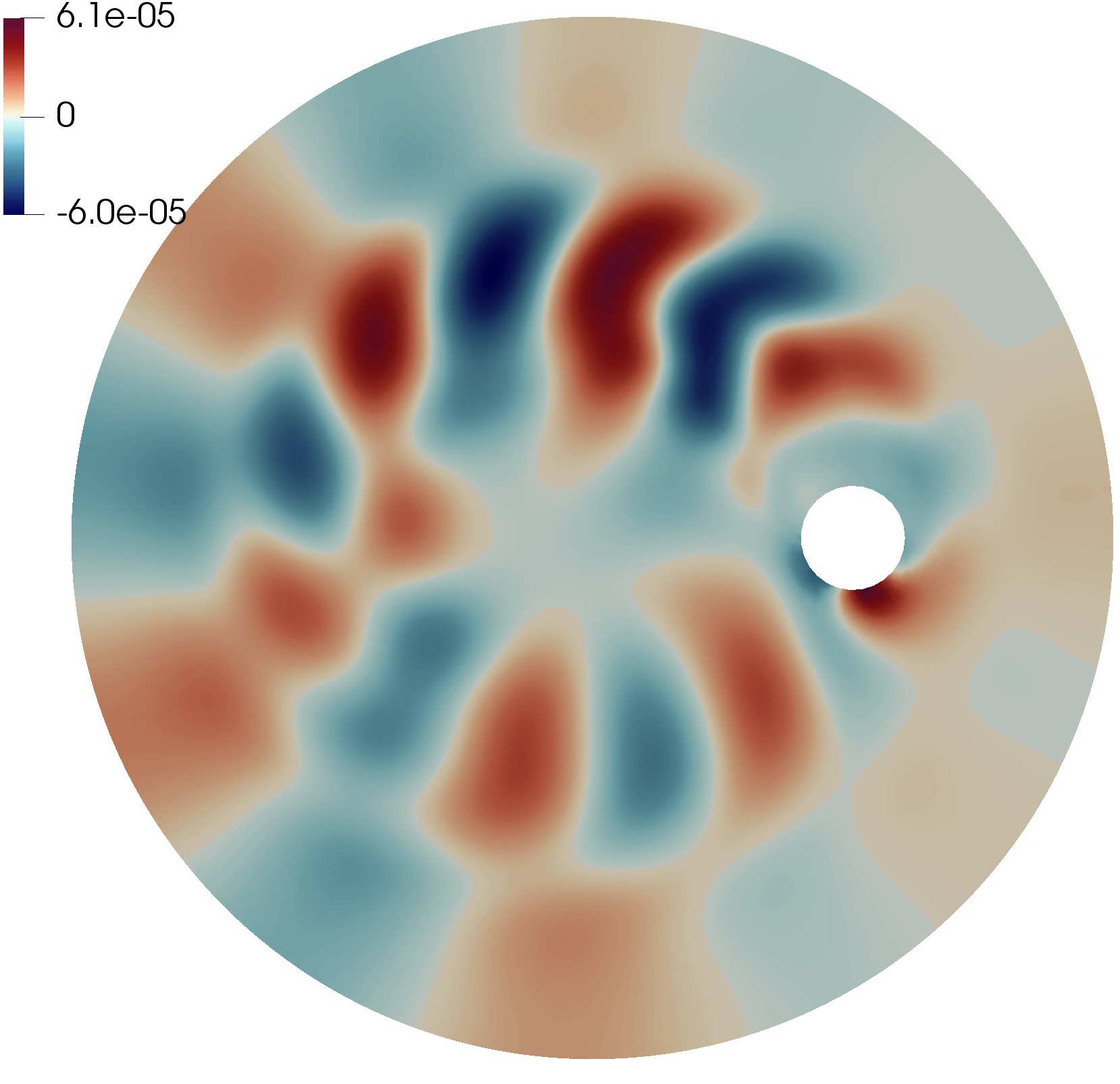}
		\end{subfigure}
		\begin{subfigure}{0.28\linewidth}
			\centering
			\includegraphics[width = 1\linewidth]{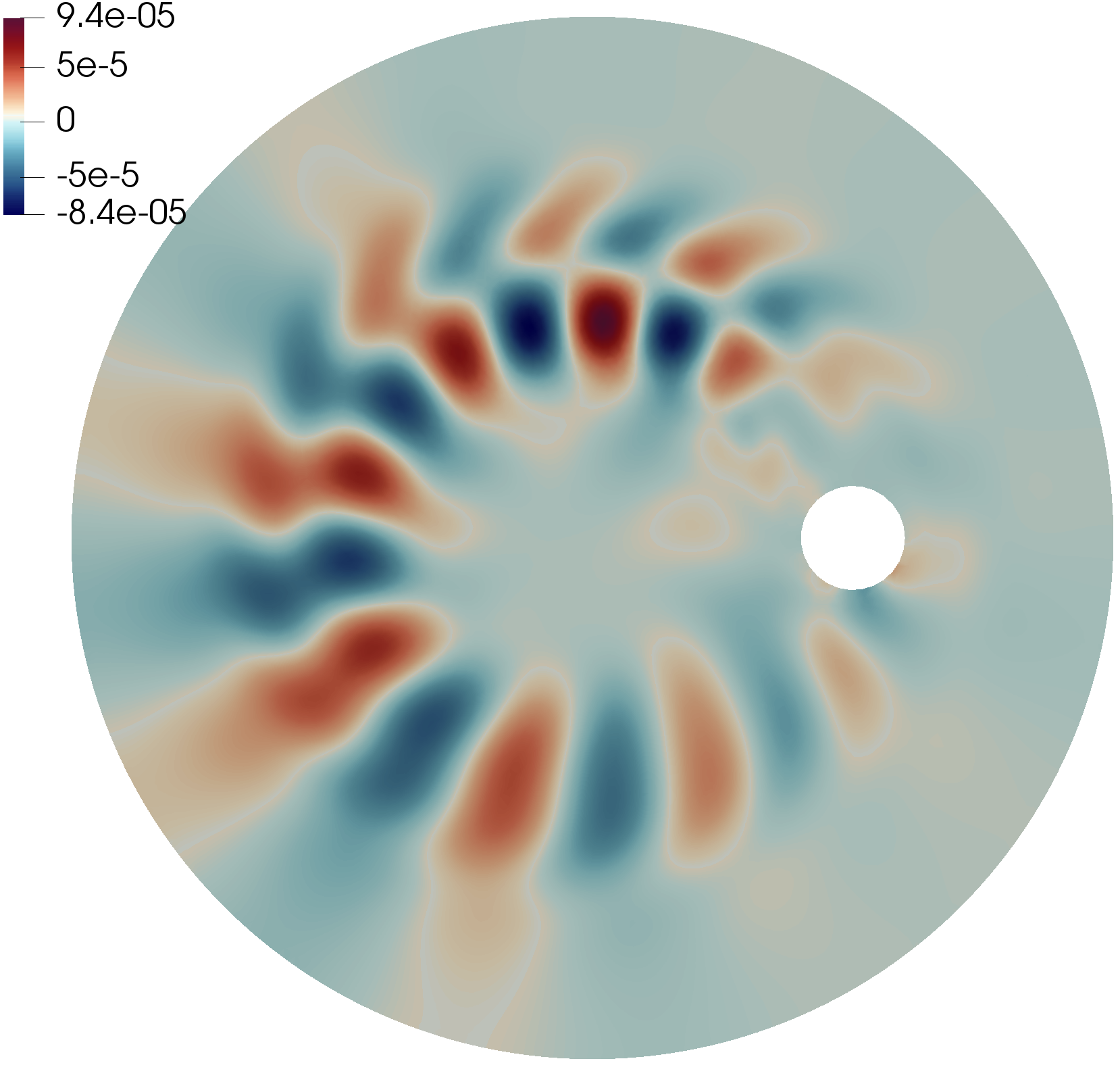}
		\end{subfigure}
		
		\caption{$\nabla \cdot u_R(x)$ with $R$ from 1 (top left) to 6 (bottom right).}
		\label{fig:div-basis}
	\end{figure}

	\begin{figure}[!ht]
		\centering
		\includegraphics[width = .4\linewidth]{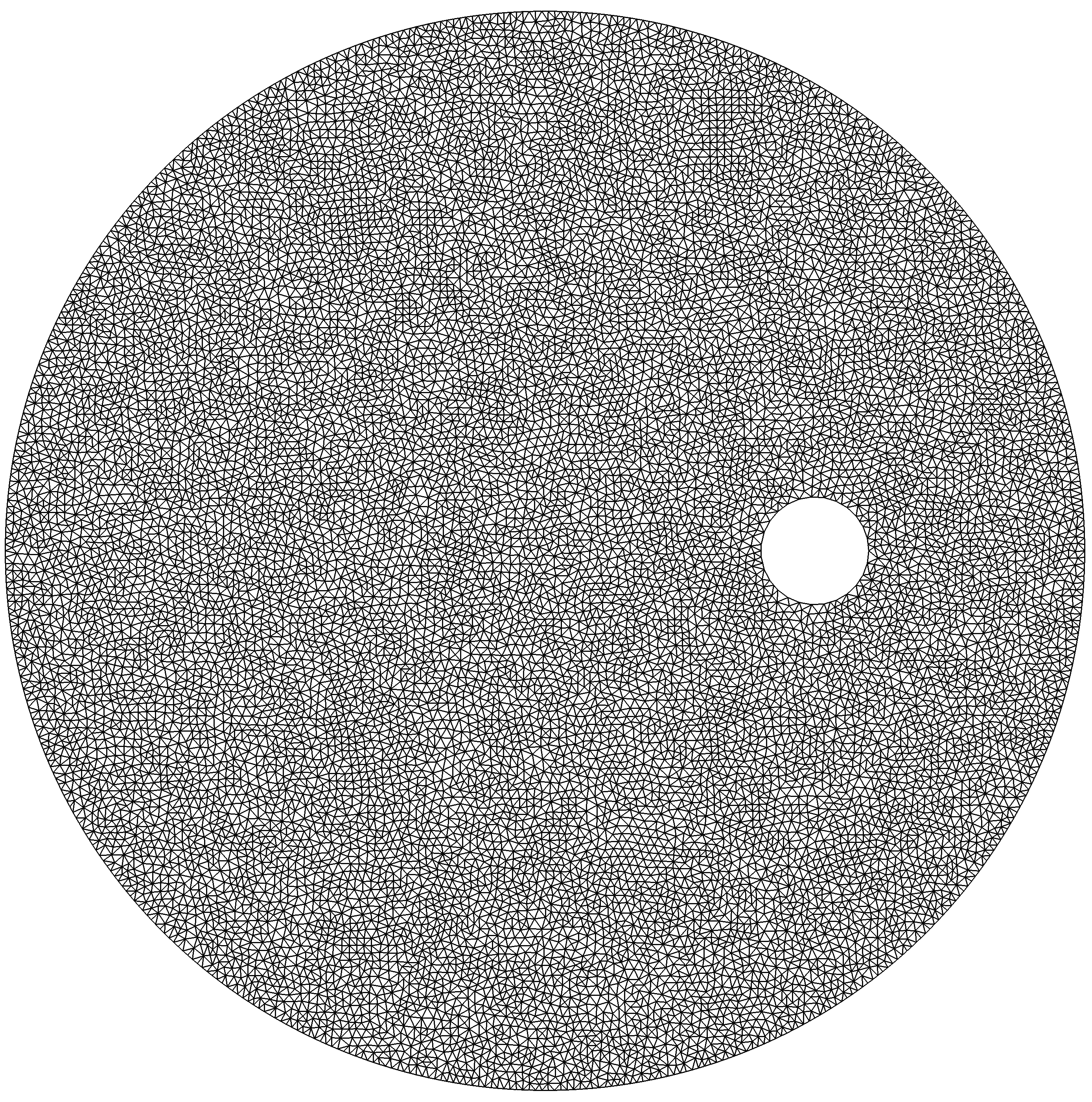}
		\caption{Spatial mesh for the finite element approximation.}
		\label{fig:mesh}
	\end{figure}

	\begin{figure}[!ht]
		\centering
		\begin{subfigure}{0.49\linewidth}
			\centering
			\includegraphics[width = 1\linewidth]{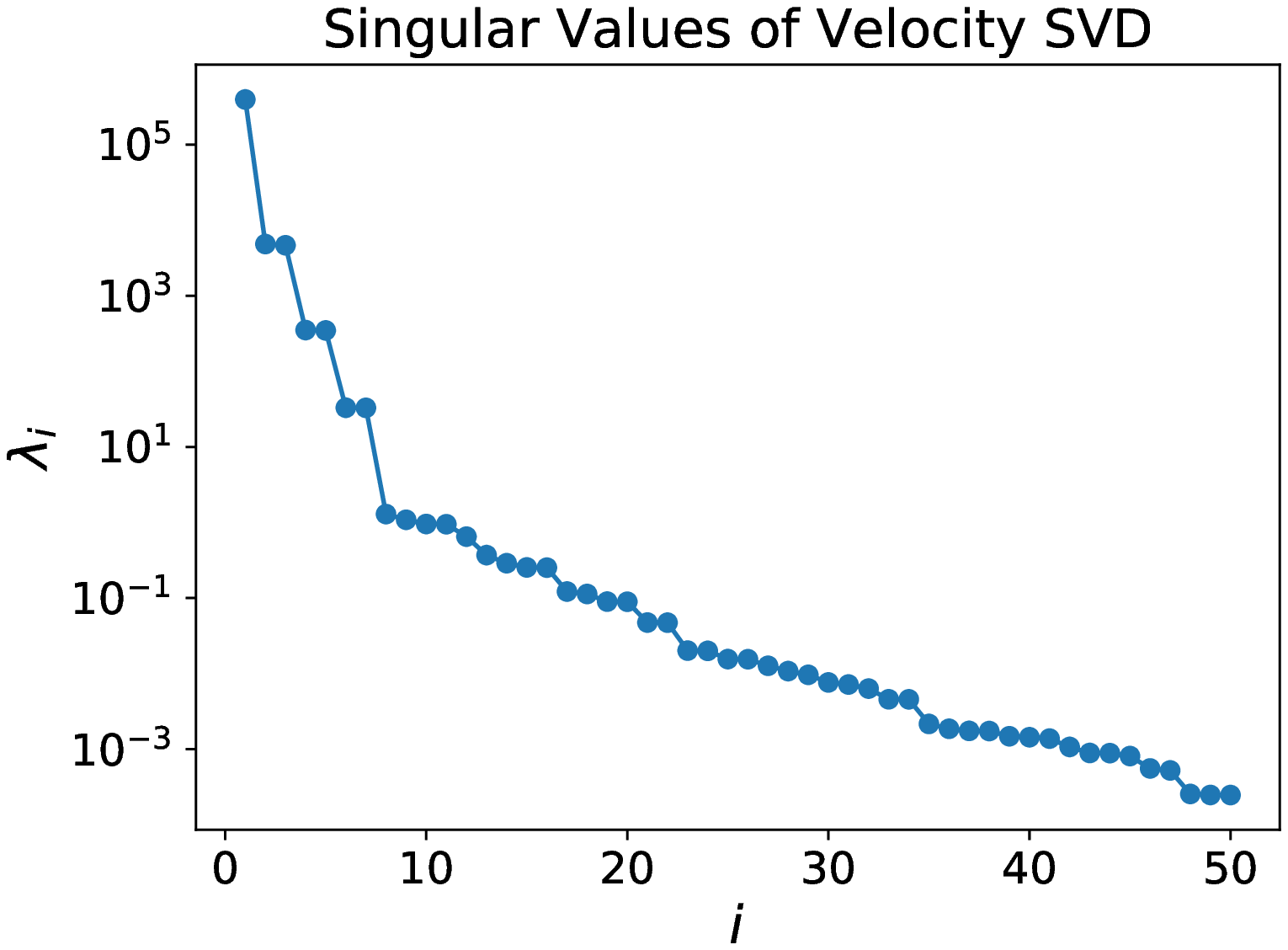}
		\end{subfigure}
		\begin{subfigure}{0.49\linewidth}
			\centering
			\includegraphics[width = 1\linewidth]{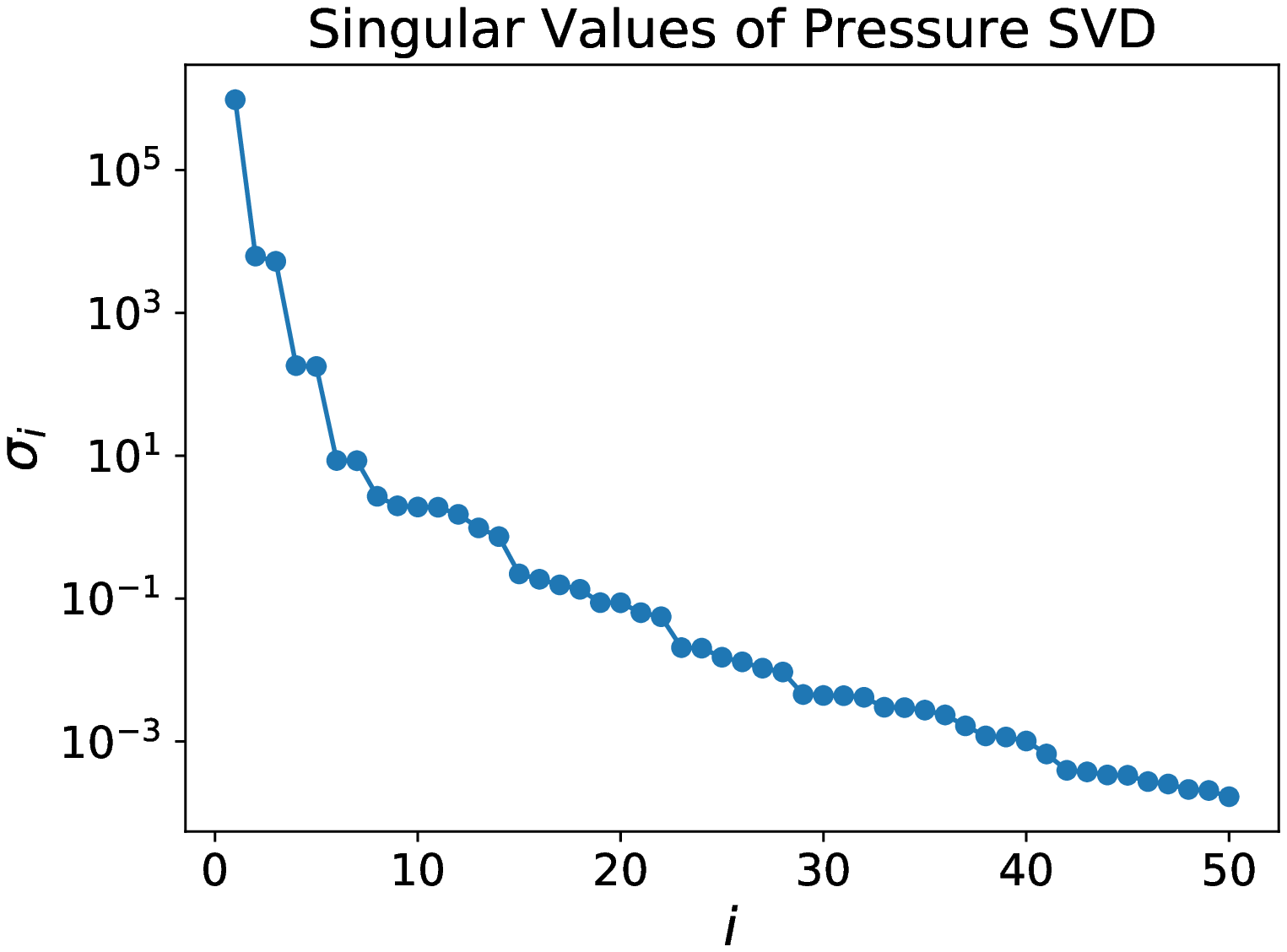}
		\end{subfigure}
		
		\caption{Singular values of the first 50 modes for pressure and velocity.}
		\label{fig:singular}
	\end{figure}
	
	The viscosity is $\nu = \frac{1}{100}$ and the body force is given by 
	\begin{equation}\notag
	f(x) = (-4y(1-x^2-y^2),4x(1-x^2-y^2)).
	\end{equation}
	The POD basis is computed from snapshots of the finite element discretization of a backward Euler artificial compression scheme. 
	\begin{remark}
		We emphasize that, since the snapshots are generated using an artificial compression scheme they will not be weakly divergence free. This is clearly illustrated in Fig. 6.3, where we plot the magnitude of the divergence of the velocity field.  We note that ROMs based on the pressure Poisson equation (i.e., ROMs in approach II in Section 1) cannot be used when the snapshots are not weakly divergence free.
	\end{remark}
	
	For the offline calculation, the flow is initialized at rest ($u_h^0\equiv 0$ and $p_h^0\equiv 0$). We discretized in space via the $P^2$-$P^1$ Taylor-Hood element pair. The spaces $X_h$ and $Q_h$ had 114,224 and 14,421 degrees of freedom respectively. We took $\Delta t=2.5e-4$ and $\varepsilon = 1e-6$. The mesh is shown in Fig. \ref{fig:mesh}. The no-slip, no-penetration boundary conditions are imposed on both cylinders. The flow developed into an almost periodic flow after $t=12$. Velocity and pressure snapshots were taken for every $t\in[12,16]$. The resulting singular values are shown in Fig. \ref{fig:singular}. The POD modes corresponding to the six largest singular values for velocity (resp. pressure) are shown in Fig. \ref{fig:v-basis} (resp. Fig. \ref{fig:p-basis}). 
	
	\begin{remark}\label{rem:lbb}
		We emphasize that the new AC-ROM uses the same number of velocity and pressure basis functions, i.e., $R=M$ in \eqref{AC_ROM}. Thus, we expect that the ROM LBB condition \eqref{eqn:inf-sup} is not satisfied.  This shows that the new AC-ROM avoids the ROM  LBB condition, which is generally prohibitively expensive for the RBM methods in approach I of Section 1 when those are used in realistic flows (see, e.g., Sections 4.2.2 and 4.2.3 in \cite{ballarin2015supremizer}).
	\end{remark}
	
	The force due to drag is the force exerted by the smaller cylinder against the main flow, which is counterclockwise. We calculated this as the line integral of the stress tensor around the smaller cylinder dotted with $(0,-1)$. The force due to lift is the line integral of the stress tensor around the smaller cylinder dotted with $(1,0)$. 
	
	With the stress tensor $\tau = \left(\nabla u + (\nabla u)^T\right) - pI$, and $\Gamma_{small}$ the boundary restricted to the inner cylinder, these quantities are
	\begin{equation}
	\text{Force due to drag} = -\int_{\Gamma_{small}} \tau ds \cdot e_2
	\end{equation}
	\begin{equation}
	\text{Force due to lift} = \int_{\Gamma_{small}} \tau ds \cdot e_1.
	\end{equation}
	
	\subsection{Numerical Investigation}
	
	We compare the kinetic energy, force due to drag, and force due to lift of the ROM simulations with $R=M=3,5$ and $7$ with the offline simulation in Fig. \ref{fig:energy}. $R\geq 7$ appears sufficient to capture the kinetic energy, lift and drag accurately. Again, this is in spite of the fact that the LBB condition is not satisfied due to using an equal number of pressure and velocity modes (see Remark \ref{rem:lbb}).

	\begin{figure}[!ht]
		\centering
		\begin{subfigure}{0.49\linewidth}
			\centering
			\includegraphics[width = 1\linewidth]{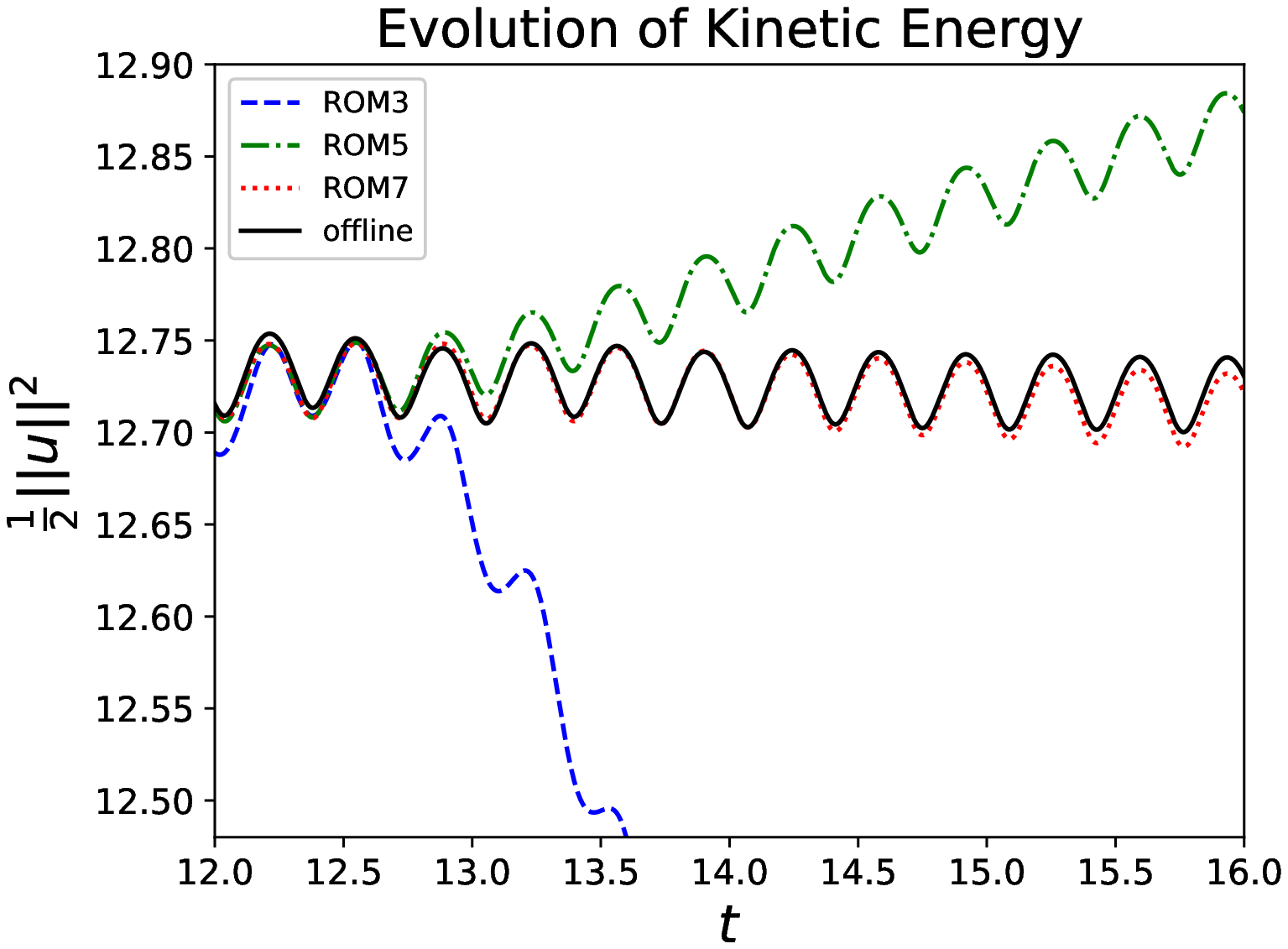}
		\end{subfigure}
		\begin{subfigure}{0.49\linewidth}
			\centering
			\includegraphics[width = 1\linewidth]{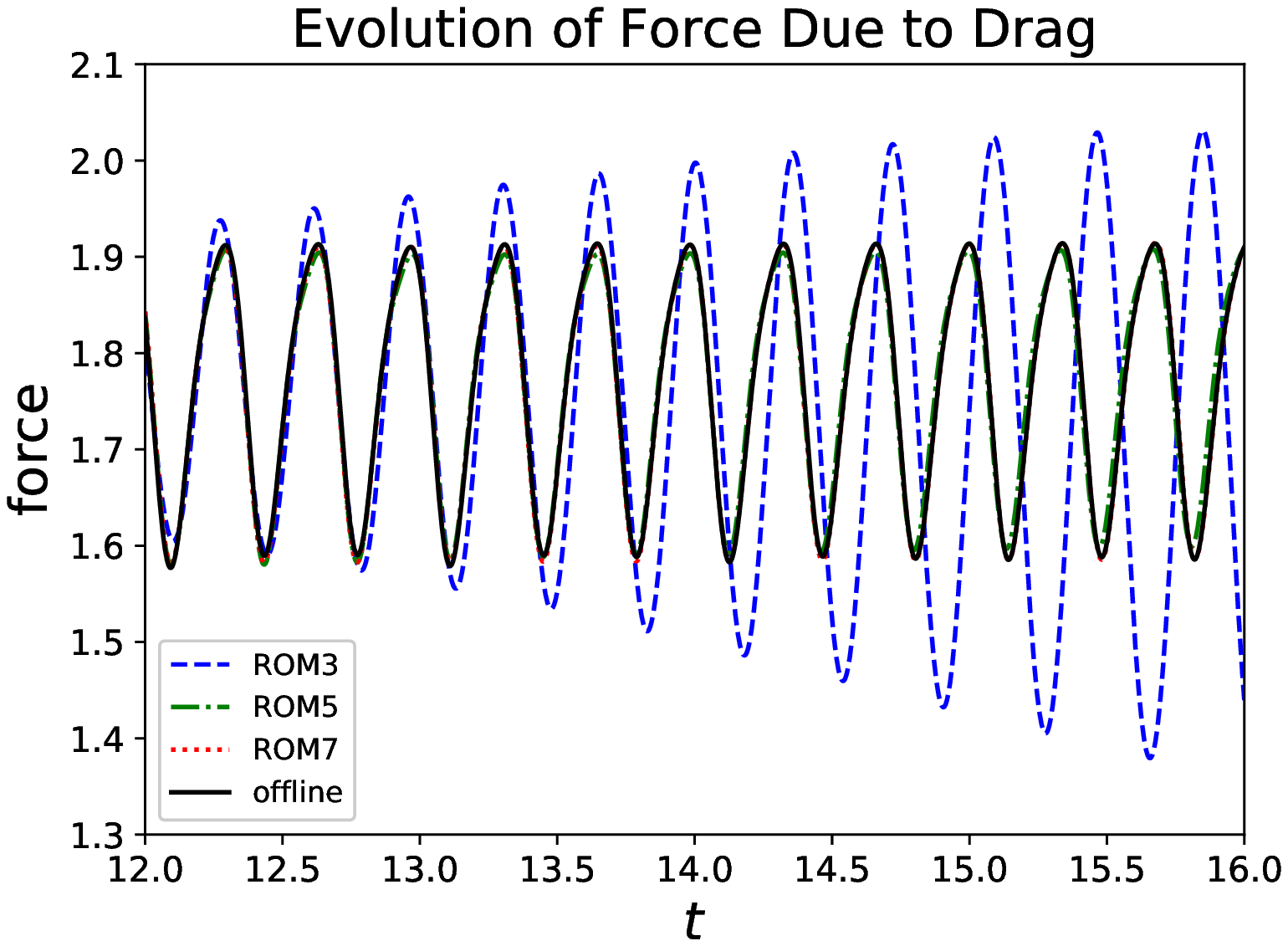}
		\end{subfigure}
		\begin{subfigure}{0.49\linewidth}
			\centering
			\includegraphics[width = 1\linewidth]{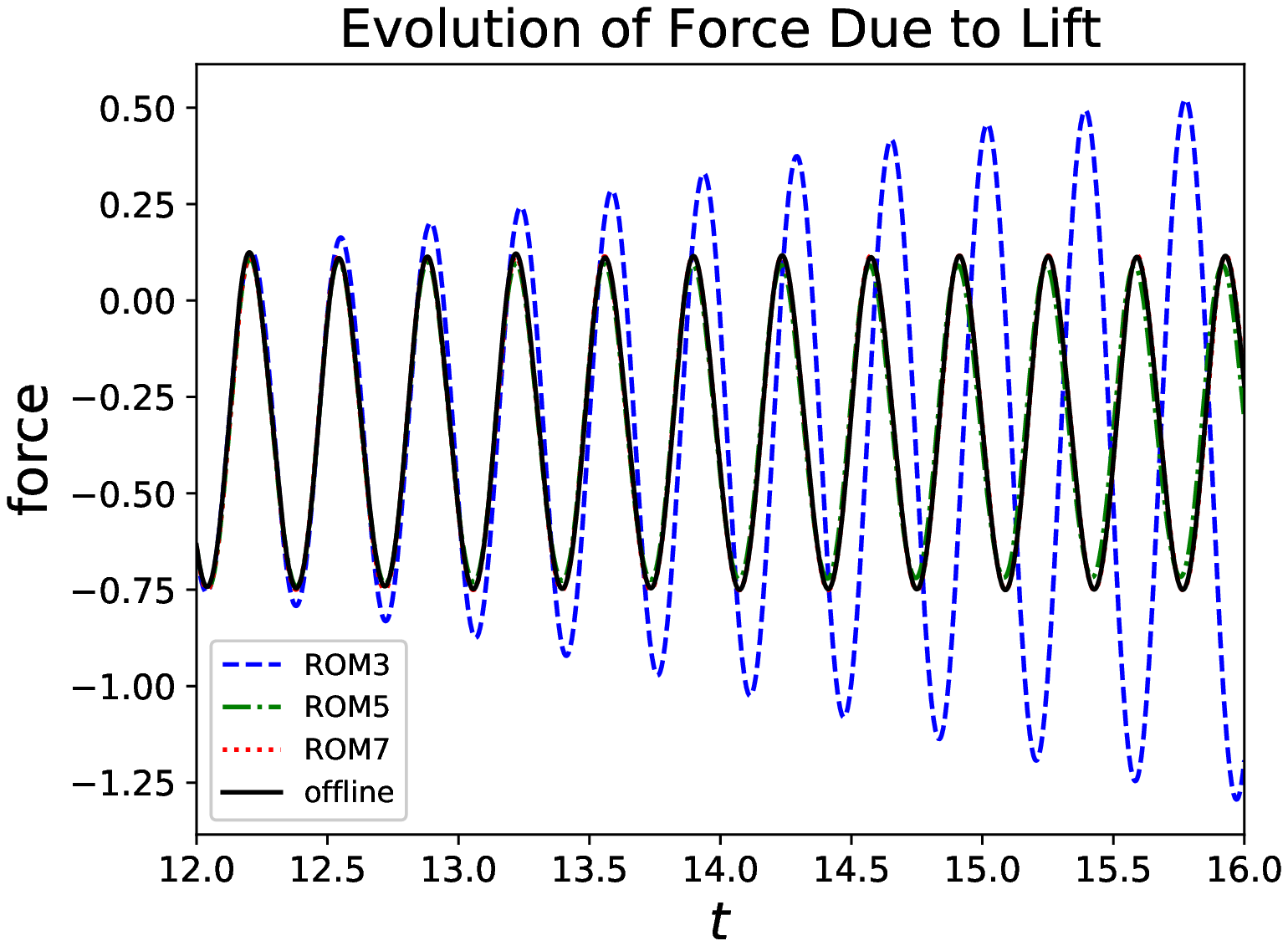}
		\end{subfigure}
		\caption{Evolution of the energy, drag and lift for AC-ROM with varying basis cardinality compared to the benchmark.}
		\label{fig:energy}
	\end{figure}
	
	Next, to illustrate numerically the theoretical scalings proved in Theorem \ref{theorem:1}, we show that the new AC-ROM algorithm yields first order scalings with respect to the time step, $\Delta t$. To test convergence with respect to $\Delta t$, we take $R=50$ using the snapshots for $t\in [12,16]$ with the offline simulation done from $t=12$ to $t=12.24$. $\Delta t$ ranges from $1.6e-3$ to $2.5e-4$, which was the stepsize from the offline simulation. The error is measured by comparing the $u_R$ to the corresponding offline solution $u_h$. The relative $l^2L^2$ errors that are shown in Fig. \ref{fig:error} verify the $\mathcal{O}(\Delta t)$ convergence proven in Theorem \ref{theorem:1}.
	
	We briefly outline the process of computing the first principal angle between the spaces $X_{R}^{div}$ and $Q_{M}$.
	Let $\{\nabla \cdot \varphi^{orth}_{i}\}_{i=1}^{R}$ denote the orthonormalized  basis of $X_{R}^{div}$ \eqref{X-div}. We consider the matrices $\mathbb{Q}= [\psi_{1},\psi_{2},\ldots \psi_{M}]$ and $\mathbb{X} = [\nabla \cdot \varphi^{orth}_{2},\nabla \cdot \varphi^{orth}_{2},\ldots \nabla \cdot \varphi^{orth}_{R}]$. Multiplying these two matrices and taking the SVD gives 
	\begin{equation}
	\mathbb{X}^{\top}\mathbb{Q} = U \Sigma V
	\end{equation}  
	The first principal angle will then be given in terms of the first nonzero entry of $\Sigma$, by $\theta_{1} = \arccos(\sigma_{1})$. We measured the influence of the principal angle between the velocity and pressure POD basis using the method outlined above. The results are shown in Fig. \ref{fig:beta}. $\alpha^2$ begins near machine precision and seems to plateau around $1e-2$ when adding more basis functions. This appears to match up with our theoretical results and explains why we do not observe an order reduction in our numerical investigation.
	\begin{figure}[!ht]
		
		\begin{subfigure}{0.49\linewidth}
			\centering
			\includegraphics[width = .95\linewidth]{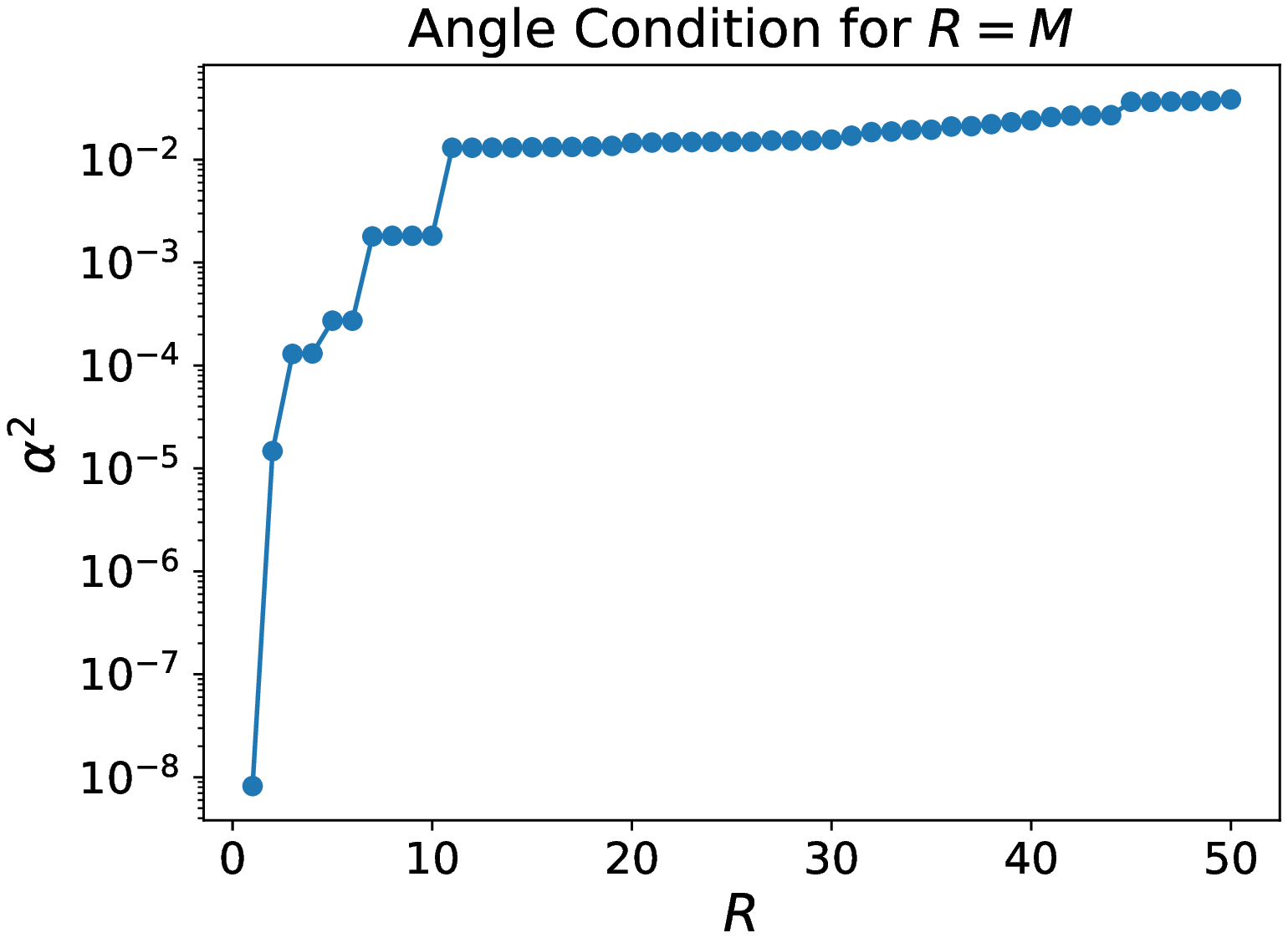}
		\end{subfigure}
		\begin{subfigure}{0.49\linewidth}
			\centering
			\includegraphics[width = .95\linewidth]{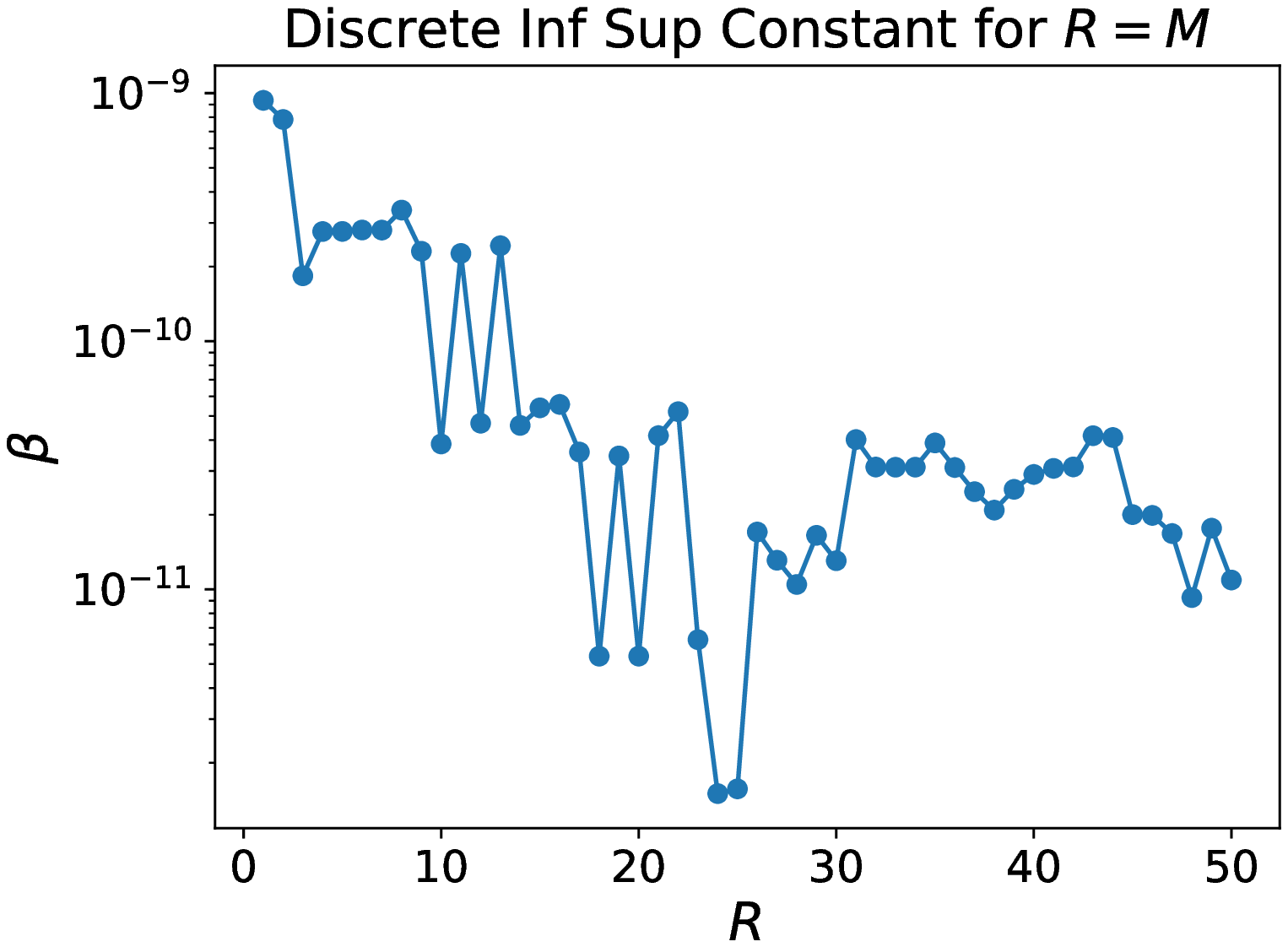}
		\end{subfigure}
		\caption{Value of $\alpha^{2}$ for equal number of velocity and pressure basis functions on the left and the corresponding inf-sup constant on the right.
			\label{fig:beta}}
	\end{figure}

	\begin{figure}[!ht]
		\centering
		\begin{subfigure}{0.49\linewidth}
			\centering
			\includegraphics[width = .95\linewidth]{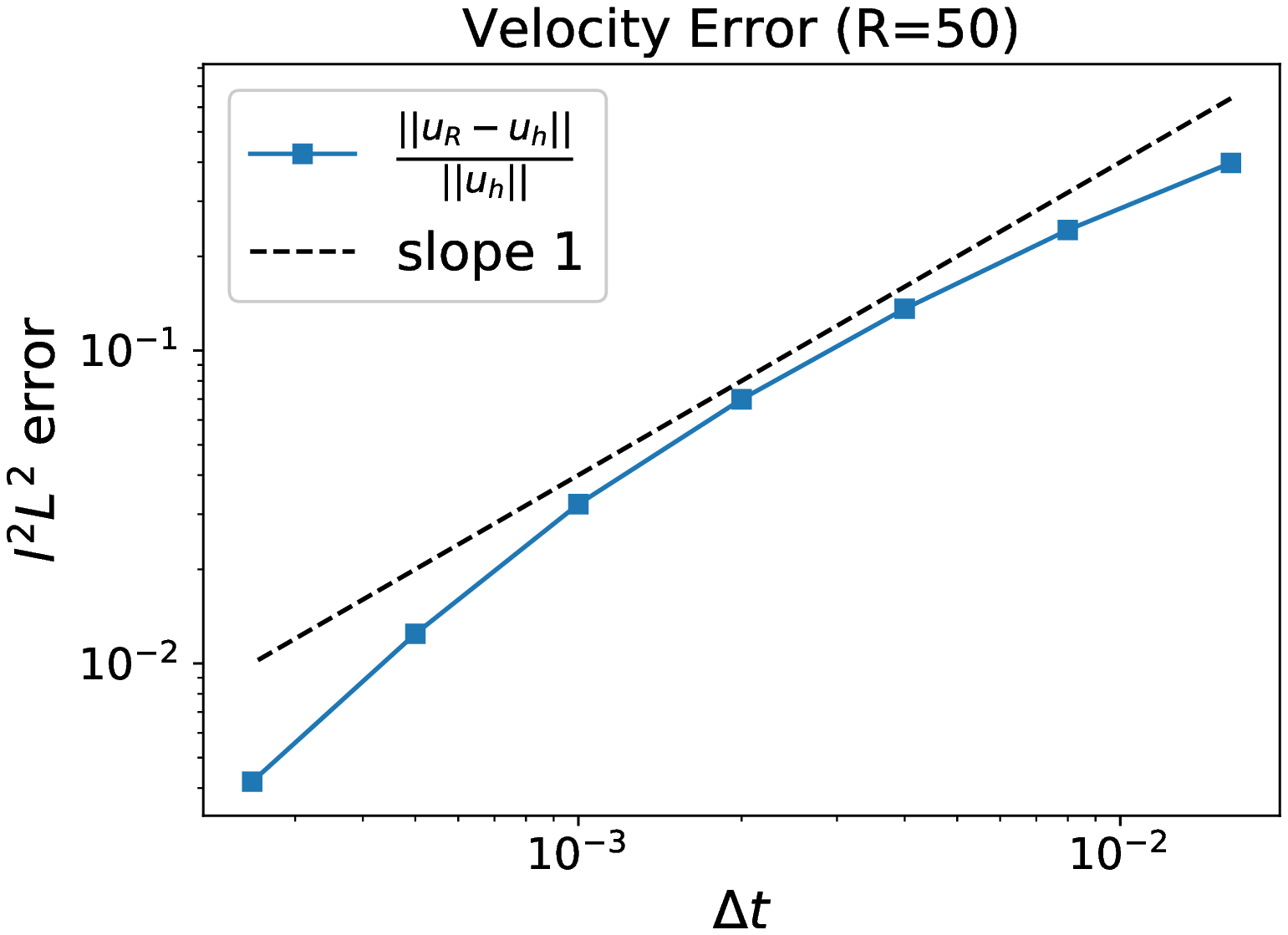}
		\end{subfigure}
		\begin{subfigure}{0.49\linewidth}
			\centering
			\includegraphics[width = .95\linewidth]{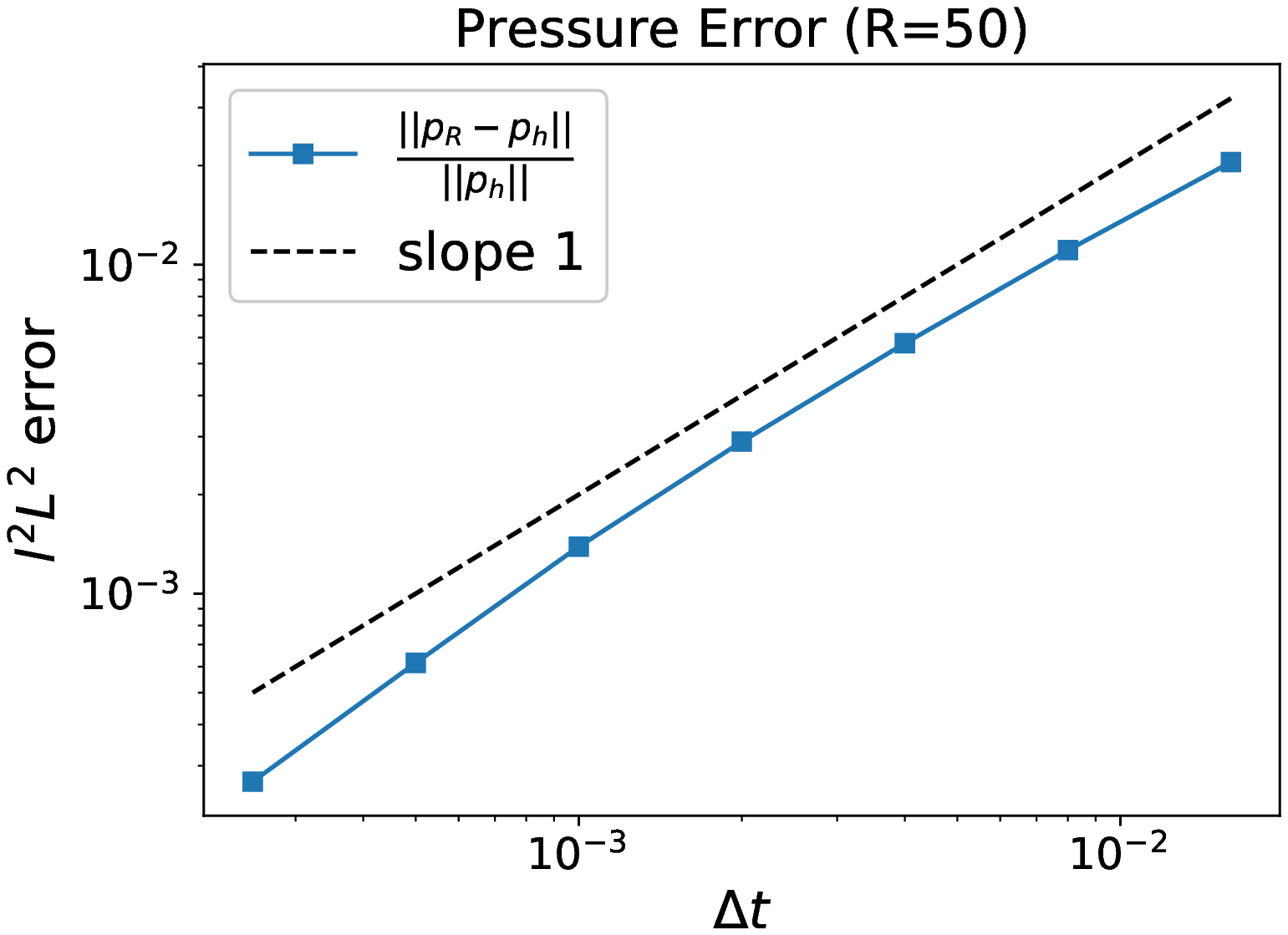}
		\end{subfigure}
		\caption{Both the pressure and velocity are first order convergent.}
		\label{fig:error}
	\end{figure}

	
	\section{Conclusions and Outlook}\label{sec:conclusions}
	In this paper, we propose an artificial compression reduced order model (AC-ROM) for the numerical simulation of fluid flows.  The new AC-ROM provides approximations for both the velocity and the pressure.  Compared to the current ROMs that generate pressure approximations, the new AC-ROM has two main advantages: (i) it does not require the fulfillment of the inf-sup/LBB condition, which can be prohibitively expensive in current ROMs~\cite{caiazzo2014numerical}; and (ii) it does not require weakly divergence-free snapshots, which allows it to work with snapshots generated with, e.g., artificial compression, penalty, or projection methods.  
	
	In Section 4, we prove the unconditional stability of the finite element discretization of the  new AC-ROM.  In Section 5, we prove an error estimate for the AC-ROM. In particular, we show that that it is possible to overcome the $\Delta t^{-1}$ order degradation due to lack of inf-sup stability,if the angle between the divergence of the velocity space and pressure space is sufficiently small.
	
In Section 6, we perform a numerical investigation of the new AC-ROM for a two dimensional flow between two offset cylinders. To generate the snapshots, we use the artificial compression method. Thus, the snapshots used in the AC-ROM construction are not weakly divergence-free, which is illustrated in \ref{fig:div-basis}. We also show that the velocity and pressure spaces of the new AC-ROM do not satisfy the LBB condition (see Fig. \ref{fig:beta}). In the numerical investigation of the new AC-ROM, we first show that the AC-ROM yields accurate velocity and pressure approximations.  Specifically, in Fig.  \ref{fig:energy}, we show that it provides energy, drag force, and lift force approximations that are close to the direct numerical simulation results.  Next, to illustrate numerically the theoretical scalings proved in Section 5, we show that the new AC-ROM algorithm yields first order scalings with respect to the time step.  Finally, in Fig. \ref{fig:beta}, we show that the constant multiplying the $\Delta t^{-1}$ term in the error estimate is extremely small. This may explain why we do not observe an order reduction in our numerical investigation.

One future research direction will be a further study of the principal angle and its impact on the convergence of the AC-ROM scheme. We will also investigate whether it plays a role in other popular schemes such as penalty methods. Another research direction that we plan to pursue is numerical stabilization of ROMs whose velocity-pressure ROM spaces do not satisfy the inf-sup/LBB condition. To our
knowledge, numerical stabilization to account for the violation of the LBB condition in ROMs has been investigated only in~\cite{caiazzo2014numerical}. In Section 3.2.3 in~\cite{caiazzo2014numerical}, it was shown that adding a pressure stabilizing/Petrov-Galerkin (PSPG) term to the ROM formulation yields better results than those produced by the other two velocity-pressure ROMs that were investigated.

	\bibliographystyle{plain}
	\bibliography{WorksCited,traian}
\end{document}